\documentclass{amsart}
\usepackage{graphicx} 
\usepackage{amsmath}
\usepackage{amsfonts}
\usepackage{amsthm}

\usepackage{amsmath,amsfonts,amsthm,mathrsfs,amssymb}
\usepackage[all]{xy}
\usepackage{enumerate}
\usepackage{graphicx,epsfig}
\usepackage[pagebackref]{hyperref}
\usepackage{mathtools}
\usepackage{bm}
\usepackage{tikz}
\usepackage{pgf}
\usepackage{epsfig}

\usetikzlibrary{matrix}

\theoremstyle{plain}
\newtheorem{thm}{Theorem}[section]
\newtheorem{cor}[thm]{Corollary}

\newtheorem{prop}[thm]{Proposition}

\theoremstyle{definition}
\newtheorem{defn}[thm]{Definition}

\newtheorem{rem}[thm]{Remark}

\newcommand{\TT}{\mathbb{T}}
\newcommand{\HH}{\mathbb H}

\newcommand{\pd}{\mathrm{pd}}
\newcommand\idd{\mathfrak{d}}

\newcommand\NN{\mathbb N}
\newcommand\SSS{\mathbb S}
\newcommand\PPP{\mathbb P}

\newcommand\im{\mathfrak{m}}

\newcommand\Tor{\textnormal{{Tor}}}

\newcommand\bq{{\underline{q}}}
\newcommand\bb{{\underline{\bullet}}}

\def\t{\theta}
\def\om{\omega}

\def\sC{{^\hbar{C}}}

\def\ZZ{{\mathbb Z}}

\def\C.{C_\bullet}
\def\F.{F_\bullet}
\def\k.{\mathcal{K}_{\bullet}}

\newcommand\intF{\Im}
\newcommand\compF{\textnormal{U}}

\newcommand\Face{\textnormal{F}}
\newcommand\ima{\textnormal{im}}

\newcommand\coker{\textnormal{coker}}

\newcommand\depth{\textnormal{depth}}

\newcommand\codim{\textnormal{codim}}

\newcommand\Supp{\textnormal{Supp}}

\newcommand\Ext{\textnormal{Ext}}

\newcommand\ext{\textnormal{Ext}}
\newcommand\Hom{\textnormal{Hom}}

\title[Multiple Tor modules]{Multiple Tor modules: rigidity and Mayer-Vietoris spectral sequences}

\author[A. Banerjee]{Arindam Banerjee}
\address{Department of Mathematics, IIT Kharagpur, Kharagpur, India}
\email{123.arindam@gmail.com}

\author[M. Chardin]{Marc Chardin}
\address{Institut de Mathématiques de Jussieu, CNRS \& Sorbonne Université, France}
\email{marc.chardin@imj-prg.fr}

\author[R. Holanda]{Rafael Holanda}
\address{Departamento de Matem\'atica, CCEN, Universidade Federal de Pernambuco, Recife, PE, 50740-560, Brazil}
\email{rafael.holanda@ufpe.br}

\date{\today}

\keywords{Tor module, rigidity, Tor independence, Mayer-Vietoris sequence}
\subjclass[2020]{Primary 13D02, 18G40; Secondary 13C12}

\begin{document}

\begin{abstract}
We extend some properties of a pair of ideals described in terms of Tor modules to any number of ideals, including the well-known rigidity property. Those extensions require the development of a homological theory for spectral sequences arising from multiple complexes. Out of this theory, two new complexes associated with quotients by sums and quotients by products of the given ideals emerge, and their homologies are related via the Tor-independence property. In the multigraded setting, we describe the support regions of Tor modules for quotients by sums and products of ideals generated by variables in terms of each other.

\end{abstract}

\maketitle

\section{Introduction}

Throughout the paper, by default, {\it ring} means a commutative and unitary ring.

  For a pair of ideals $I, J$ in a ring $R$, the study of the modules $\Tor_i^R(R/I, R/J)$ has been one of the very popular research topics. The module $\Tor_1^R(R/I, R/J)$ is isomorphic to $(I\cap J)/IJ$, while $\Tor_2^R (R/I, R/J)$ is isomorphic to the kernel of the natural map $I \otimes_R J\rightarrow IJ$. 
  
  The vanishing of $\Tor$ modules sheds light on various properties of the corresponding pair of ideals; for instance, if $R$ is a regular local ring, the vanishing all these for $i>0$ is equivalent to the vanishing of the first one ($i=1$), and if $R/(I+J)$ is Cohen-Macaulay (e.g. primary for the maximal ideal of $R$) then this first one vanishes if and only if both $R/I$ and $R/J$ are Cohen-Macaulay and the codimension of $I+J$ is the sum of the codimensions of $I$ and $J$ in $R$. Geometrically, it corresponds to a proper intersection of Cohen-Macaulay schemes \cite{Ser}. 
  
 Serre and Auslander \cite{Ser, Aus}  proved the rigidity property for Tor modules ($\Tor_i^R(R/I , R/J)=0 \Rightarrow \Tor_j^R(R/I , R/J)=0 , \forall j\geq i$) over unramified regular local rings; this was then completed by Lichtenbaum \cite{Lic} for arbitrary regular local rings.

We here extend several of the above-mentioned results to the case of more than two ideals, and also treat the case of modules that need not be cyclic. An incarnation of the module $\Tor_i^R (R/I, R/J)$ is as the $i$-th homology module of the tensor product double complex built from flat $R$-resolutions of $R/I$ and $R/J$. It naturally generalises to any number $n$ of ideals $I_1, \ldots, I_n$ as the $i$-th homology of the multicomplex obtained by taking the tensor product of flat resolutions of the quotients $R/I_j$ for $j=1,\ldots,n$. The definition and several properties of these modules figure in \cite{EGAIII}, which also presents these for complexes of sheaves.

We first present here a generalization of the Auslander-Serre result for an arbitrary finite collection of ideals (or of finitely generated modules), and its declination in geometric terms, together with some additional features that are only of interest for three or more modules.\\

\begin{thm}{\rm (Theorem \ref{rigtor} and Corollary \ref{A.8})} Assume that $R$ is a regular local ring containing a field
and $M_{1},\ldots ,M_{s}$ are non zero  finitely generated
  $R$-modules. Then,\smallskip

\begin{enumerate}[\rm (1)]
\item \begin{itemize}
\item[(a)] $\Tor_{i}^{R}(M_{1},\ldots ,M_{s})=0$ implies 
  $\Tor_{j}^{R}(M_{1},\ldots ,M_{s})=0$  for all $j\geq i$.
  
\item[(b)] For any $i$ and $t<s$, $\Tor_{i}^{R}(M_{1},\ldots ,M_{s})= 0$ implies 
  $\Tor_{i}^{R}(M_{1},\ldots ,M_{t})= 0$.\smallskip
\end{itemize}

\item The following
are equivalent,

\begin{enumerate}[\rm(a)]
\item  $\Tor_{1}^{R}(M_{1},\ldots ,M_{s})=0$ and 
$M_{1}\otimes_{R}\cdots\otimes_{R}M_{s}$ is Cohen-Macaulay.

\item The codimension of $M_{1}\otimes_{R}\cdots\otimes_{R}M_{s}$ is 
the sum of the projective dimensions of the $M_{i}$'s.

\item The intersection of the $M_{i}$'s is proper
and every $M_{i}$ is Cohen-Macaulay.
\end{enumerate}
\end{enumerate}
\end{thm}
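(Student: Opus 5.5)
Since $R$ is regular local, each $M_k$ has a finite free resolution $F^{(k)}_\bullet$. By associativity of the multiple tensor product, $\Tor_i^R(M_1,\ldots,M_s)=H_i(F^{(1)}\otimes\cdots\otimes F^{(s)})$. I would therefore attack (1) by induction on $s$, using a spectral sequence that expresses the $s$-fold Tor through iterated two-variable Tor. Set $G_\bullet=F^{(1)}\otimes\cdots\otimes F^{(s-1)}$. This is a finite free complex whose homology is $T_j:=\Tor_j^R(M_1,\ldots,M_{s-1})$, with $H_0(G)=M_1\otimes\cdots\otimes M_{s-1}$. The Künneth-type spectral sequence of $G\otimes F^{(s)}$ is
\[
E^2_{p,q}=\Tor_p^R(T_q,M_s)\Rightarrow \Tor_{p+q}^R(M_1,\ldots,M_s).
\]

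For (1)(b) and (1)(a), I plan a joint induction on $s$, with the case $s=2$ being Auslander--Serre--Lichtenbaum rigidity. The case $s=2$ also contains (b): over a regular local ring, $\Tor_i(M,N)=0$ with $N\neq0$ forces $i>\pd M$ for the relevant depth reasons, so $\Tor_i(M,R)=0$ is automatic for $i\ge1$. For $s\ge3$ I would use the fact that regular rings containing a field satisfy rigidity in a strong form, namely Lichtenbaum's rigidity for finitely generated modules over regular local rings. I would combine this with a "corner" argument: the term $E^2_{p,0}=\Tor_p(M_1\otimes\cdots\otimes M_{s-1},M_s)$ and the terms with $p$ maximal behave like edges.

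A cleaner route is to replace the spectral sequence by a single reduction. By the reduction to the diagonal (this is where the hypothesis that $R$ contains a field is used), $\Tor^R_\bullet(M_1,\ldots,M_s)$ is computed as $\Tor^{S}_\bullet(M_1\widehat\otimes_k\cdots\widehat\otimes_k M_s,\,R)$. Here $S$ is the completed tensor product $\widehat R\widehat\otimes_k\cdots\widehat\otimes_k\widehat R$, which is again regular local, and $R$ is viewed as an $S$-module via the diagonal. Under this identification (1)(a) follows directly from two-module rigidity over $S$, where Lichtenbaum's theorem applies.

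For (1)(b) I would use the same diagonal picture. Write the diagonal ideal as generated by a regular sequence, cut down in stages: first the ideal $\Delta_{\le t}$ coming from the first $t$ factors, then the rest. Vanishing of $\Tor_i$ for the full Koszul-type computation then propagates to the partial one. Alternatively, apply (1)(a) and the depth formula $\pd(M_1\widehat\otimes\cdots)=\sum\pd M_k$, which shows that $\Tor_i(M_1,\ldots,M_s)=0$ forces the $t$-fold Tor to vanish from the same index. I expect (b) to be the main obstacle, since the spectral sequence $E^2_{p,q}$ above has to be controlled at the edge $\Tor_0(T_i,M_s)=T_i\otimes M_s$, with $T_i\otimes M_s=0$ forcing $T_i=0$ by Nakayama since $M_s\neq0$. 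Concretely, I would show by descending induction on the total degree, using (1)(a), that if $\Tor_i^R(M_1,\ldots,M_s)=0$ then all $E^\infty_{p,q}$ with $p+q\ge i$ vanish. The top nonzero row $q_0$ of $T_\bullet$ then gives a surviving corner $E^2_{0,q_0}=T_{q_0}\otimes M_s\neq0$ if $q_0\ge i$, which is a contradiction.

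For (2), take $s$-fold Tor via the diagonal as above. Over the regular ring $S$, the Serre intersection theory gives $\dim M\otimes N\le\dim M+\dim N-\dim S$, that is, $\codim(M_1\otimes\cdots\otimes M_s)\le\sum\codim M_k\le\sum\pd M_k$. Equality throughout is exactly (c), namely properness together with $\codim M_k=\pd M_k$ (the Cohen--Macaulay condition). For (a)$\Leftrightarrow$(b), I would use the Auslander--Buchsbaum formula for the complex $G\otimes F^{(s)}$, whose length is $\sum\pd M_k$. Acyclicity ($\Tor_{\ge1}=0$, equivalently $\Tor_1=0$ by (1)(a)) makes it a free resolution of $M_1\otimes\cdots\otimes M_s$ of length $\sum\pd M_k$. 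By Auslander--Buchsbaum, Cohen--Macaulayness is then equivalent to $\codim=\sum\pd M_k$. Conversely, under (b) the new intersection theorem / acyclicity lemma of Peskine--Szpiro applied to this complex of length equal to the codimension of its $H_0$ yields acyclicity and the Cohen--Macaulay property.
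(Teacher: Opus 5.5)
Your treatment of (1)(a) is essentially the paper's. You complete, pass to the diagonal so that $\Tor_i^R(M_1,\ldots,M_s)\simeq\Tor_i^S(P,R)\simeq H_i(K_\bullet(f;P))$, and use rigidity over $S$. The paper uses rigidity of Koszul homology, which is more elementary than Lichtenbaum, but either works.

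\textbf{The gap is in (1)(b).} The argument you actually carry out there fails. In the spectral sequence $E^2_{p,q}=\Tor_p^R(T_q,M_s)$, the top-row term $E^2_{0,q_0}=T_{q_0}\otimes M_s$ has only its \emph{outgoing} differentials vanishing for positional reasons. It is the target of $d^r\colon E^r_{r,q_0-r+1}\to E^r_{0,q_0}$ for every $r\ge 2$. The sources are subquotients of $\Tor_r^R(T_{q_0-r+1},M_s)$ in lower rows, and nothing forces these maps to be zero. So $E^\infty_{0,q_0}$ is only the image of the edge map $T_{q_0}\otimes M_s\to\Tor^R_{q_0}(M_1,\ldots,M_s)$, and it can vanish even though $T_{q_0}\otimes M_s\neq 0$.

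This does occur in the analogous Koszul spectral sequence $H_p(f'';H_q(f';P))\Rightarrow H_{p+q}(f',f'';P)$. Take $S=k[[a,b,c]]$, $P=S^3/\langle be_1-ae_2,\,ce_1-ae_3\rangle$, $f'=(a)$, $f''=(b,c)$. Then $E^2_{0,1}\neq 0$ and $d^2\colon E^2_{2,0}\to E^2_{0,1}$ is onto.

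Three smaller points on (1)(b):
\begin{itemize}
\item The descending induction is unnecessary. By (1)(a) the abutment vanishes in all degrees $\ge i$, so every $E^\infty_{p,q}$ with $p+q\ge i$ vanishes at once.
\item Your depth-formula alternative is not an argument. The equality $\pd_SP=\sum_i\pd M_i$ gives no comparison between the top nonvanishing indices of the $s$-fold and the $t$-fold Tor.
\item Your remark that $\Tor_i^R(M,N)=0$ with $N\neq 0$ forces $i>\pd M$ is false: take $R/(x)$ and $R/(y)$ over $k[[x,y]]$ with $i=1$. The case $s=2$ of (b) is trivial anyway.
\end{itemize}

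The route you mention first, cutting the diagonal down in stages, is the right one and is exactly the paper's. What is missing is its key mechanism. Write $f=(f',x_1,\ldots,x_m)$ with $f'$ the diagonal equations for the first $t$ factors. The exact sequences $H_i(f',x_1,\ldots,x_{l-1};P)\xrightarrow{\pm x_l}H_i(f',x_1,\ldots,x_{l-1};P)\to H_i(f',x_1,\ldots,x_l;P)$, together with Nakayama's lemma (each $x_l$ lies in $\im_S$), show one element at a time that $H_i(f;P)=0$ implies $H_i(f';P)=0$. You then need the identification $H_i(K_\bullet(f';P))\simeq\Tor_i^R(M_1,\ldots,M_t)\hat\otimes_kM_{t+1}\hat\otimes_k\cdots\hat\otimes_kM_s$. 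This module is nonzero as soon as $\Tor_i^R(M_1,\ldots,M_t)\neq 0$, because $M_{t+1},\ldots,M_s\neq 0$. A positional argument on the multi-Tor spectral sequence cannot supply this element-by-element Nakayama step.

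\textbf{For (2) you take a genuinely different and sound route.} The paper derives Corollary \ref{A.8} from the formula $\sum_i\pd M_i=\dim R+j-\varepsilon$ of Theorem \ref{rigtor}(2), obtained from the double complex $\Hom_S(K_\bullet(f;P),I^\bullet)$, together with Serre's inequality and the decomposition $(\star)$. You bypass that formula:
\begin{itemize}
\item (b)$\Leftrightarrow$(c) follows from the chain $\codim M\le\sum_i\codim M_i\le\sum_i\pd M_i$.
\item (a)$\Leftrightarrow$(b) follows from Auslander--Buchsbaum applied to the tensor product of \emph{minimal} resolutions. Minimality is needed to get $\pd M=\sum_i\pd M_i$ rather than just $\le$.
\end{itemize}
This is more elementary, though it does not recover the paper's information on $j$ and $\varepsilon$.

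Two repairs are needed:
\begin{itemize}
\item Serre's theorem reads $\dim(M\otimes N)\ge\dim M+\dim N-\dim S$. You wrote it backwards, though your codimension version is correct.
\item For (b)$\Rightarrow$(a), no acyclicity lemma applies knowing only that $\codim H_0$ equals the length of the complex; consider $0\to R\xrightarrow{0}R^2\xrightarrow{(x\ y)}R\to 0$ over $k[[x,y]]$. You must use that every $H_q=\Tor_q^R(M_1,\ldots,M_s)$ is supported in $\Supp M$. Then the complex is split exact at all primes of height $<\ell=\sum_i\pd M_i$. Buchsbaum--Eisenbud (grade equals height in $R$) gives acyclicity, and hence $\depth M\ge\dim R-\ell=\dim M$.
\end{itemize}
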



 In the case of two ideals over an arbitrary ring, there is a nice description of the first two Tor modules, as recalled above, and an important sequence intimately linked to this description:
 $$
 \xymatrix{
0\ar[r]&R/IJ\ar^(.4){\theta}[r]&R/I\oplus R/J \ar[r]& R/(I+J)\ar[r]& 0\\
 }
 $$
 that is exact on the right ($\coker (\theta )=R/(I+J)=\Tor_0^R(R/I,R/J)$) and such that $\ker (\theta )=(I\cap J)/IJ=\Tor_1^R(R/I,R/J)$.

As an illustration, we will now describe the case of three ideals $I_1, I_2, I_3\subseteq R$. 
Then, to get an extension of the sequence above, consider the maps
$$
\xymatrix{
R/I_2I_3\oplus R/I_1I_3\oplus R/I_1I_2 \ar^(.55){\varphi}[r]&R/I_1\oplus R/I_2 \oplus R/I_3\\}
$$
$$
\xymatrix{
R/I_1\oplus R/I_2 \oplus R/I_3 \ar^(.35){\psi}[r]&R/(I_2+I_3)\oplus R/(I_1+I_3)\oplus R/(I_1+I_2) \\}
$$
where both $\varphi$ and $\psi$ are induced by the matrix
$
{\footnotesize
\left(
\begin{array}{ccc}
0&1&-1\\ -1&0&1\\ 1&-1&0\\
\end{array} \right)}
$. Our results show that 
$$
\coker (\varphi )\simeq 
\coker (\psi )\simeq R/(I_1+I_2+I_3)=\Tor_0^R(R/I_1,R/I_2,R/I_3).
$$
 Also, assuming $I_i\cap I_j=I_iI_j$ for any $i\neq j$, the natural maps $\kappa_\varphi: R/I_1I_2I_3\to\ker\varphi$ and $\kappa_\psi:R/I_1I_2I_3\to\ker\psi$, given by the matrix $(\begin{array}{ccc}1&1&1\end{array})$, satisfy:
\begin{enumerate}[\rm(1)]
    \item $\coker\kappa_\varphi \simeq\coker\kappa_\psi\simeq\Tor^R_1(R/I_1,R/I_2,R/I_3)$,
    \item there exist natural surjective maps $$s_\varphi:\Tor_2^R(R/I_1,R/I_2,R/I_3)\to\ker\kappa_\varphi\quad\mbox{and}\quad s_\psi:\Tor^R_2(R/I_1,R/I_2,R/I_3)\to\ker\kappa_\psi.$$ Furthermore, both $s_\varphi$ and $s_\psi$ are isomorphisms if $I_i\otimes_RI_j\xrightarrow{can}I_iI_j$ is an isomorphism for all $i\neq j$.
\end{enumerate}

In particular, if $I_i\otimes_RI_j= I_iI_j=I_i\cap I_j$ for all $i\neq j$, then $\ker (\kappa_\varphi )$ and $\ker (\kappa_\psi )$ are both isomorphic to $\Tor^R_2(R/I_1,R/I_2,R/I_3)$. 

The modules $\Tor^R_i(R/I_1, R/I_2, R/I_3)$ for $i=1,2$ thus emerge as constraints for $R/I_1I_2I_3$ to be the kernel of $\varphi$ and $\psi$.

The description of the first Tor module in the case of two ideals, as $(I\cap J)/IJ$, extends as follows to an arbitrary number of ideals (see Remark \ref{A.6}), however, with a less transparent interpretation if $n>2$:

\begin{prop}
Let $I_1,\ldots, I_n$ be ideals of $R$ and $M$ be the submodule of $I_{1}\oplus \cdots \oplus I_{n}$ of
tuples $x=(x_{1},\ldots ,x_{n})$ such that $x_{1}+\cdots +x_{n}=0$. The module
$M$ contains the submodule $P$ generated by the tuples $x$ such that
$x_{\ell}=0$ except for two indices $i$ and $j$ and $x_{i}=-x_{j}\in I_{i}I_{j}$ and
$$
\Tor_{1}^{R}(R/I_{1},\ldots ,R/I_{n})\simeq M/P.
$$
\end{prop}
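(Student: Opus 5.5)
We compute $\Tor_1^R(R/I_1,\ldots,R/I_n)$ directly as the first homology of the total complex $T_\bullet=F^{(1)}_\bullet\otimes_R\cdots\otimes_R F^{(n)}_\bullet$. Here each $F^{(j)}_\bullet$ is a free resolution of $R/I_j$ chosen so that $F^{(j)}_0=R$ and the image of $F^{(j)}_1\to F^{(j)}_0$ is $I_j$. Since multiple Tor modules do not depend on the choice of flat resolutions, this is a legitimate model.

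\textbf{Low degrees of $T_\bullet$.} They read
$$T_0=R,\qquad T_1=\bigoplus_{j}F^{(j)}_1,\qquad T_2=\bigoplus_j F^{(j)}_2\;\oplus\;\bigoplus_{i<j}F^{(i)}_1\otimes_R F^{(j)}_1 .$$
Write $\partial_j\colon F^{(j)}_1\to I_j$ for the augmentation-type differential, which is surjective. Then $d_1\colon T_1\to T_0$ is $(y_j)_j\mapsto\sum_j\partial_j(y_j)$. The map $\pi=\bigoplus_j\partial_j\colon T_1\to I_1\oplus\cdots\oplus I_n$ is surjective, and $d_1$ is $\pi$ followed by summation. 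Hence $\pi$ maps $Z_1=\ker d_1$ onto $M$, and $\ker(\pi|_{Z_1})=\ker\pi=\bigoplus_j\ker\partial_j$, since each $\ker\partial_j$ already lies in $Z_1$. By exactness of each resolution at $F^{(j)}_1$, this kernel is exactly $d_2\bigl(\bigoplus_jF^{(j)}_2\bigr)$. In particular $K:=\ker(\pi|_{Z_1})\subseteq B_1=\operatorname{im} d_2$.

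\textbf{The image of $B_1$.} It remains to compute $\pi(B_1)$. The summands $F^{(j)}_2$ map to zero under $\pi$. On $a\otimes b\in F^{(i)}_1\otimes F^{(j)}_1$, the Koszul sign rule gives
$$d_2(a\otimes b)=\pm\,\partial_i(a)\,b\in F^{(j)}_1 \quad\text{and}\quad \mp\,\partial_j(b)\,a\in F^{(i)}_1,$$
with opposite signs. Applying $\pi$ yields the tuple with $\mp\,\partial_i(a)\partial_j(b)$ in slot $i$, $\pm\,\partial_i(a)\partial_j(b)$ in slot $j$, and zeros elsewhere. Because $\partial_i,\partial_j$ are surjective onto $I_i,I_j$, these products run over all generators $uv$ of $I_iI_j$ with $u\in I_i$, $v\in I_j$. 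Hence $\pi(B_1)$ is exactly the submodule $P$.

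\textbf{Conclusion.} Since $K\subseteq B_1$, the surjection $\pi\colon Z_1\to M$ induces an isomorphism
$$H_1(T_\bullet)=Z_1/B_1\;\cong\;(Z_1/K)\big/(B_1/K)\;\cong\;M/\pi(B_1)=M/P.$$
The only delicate points are two. First, the containment $K\subseteq B_1$, which relies on choosing resolutions with $F_0=R$, so that $\ker\partial_j$ is a boundary. Second, the sign bookkeeping in $d_2$, which is harmless since $P$ is closed under negation. Otherwise the argument is a direct diagram chase.
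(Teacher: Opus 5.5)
Your argument is correct, but it takes a different route from the paper. The paper never works with the total complex in low degrees. It gets the statement as the case $p=1$ of Remark \ref{descriptionofmultitors}, i.e.\ from the Mayer--Vietoris spectral sequence (\ref{spectralfromproducttosum}) built in Proposition \ref{fromaugmentedinteriorspectral}:

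\begin{itemize}
\item On its $E^1$ page the row $q=-1$ is the complex $\PPP_\bullet$.
\item The terms $\HH_{1,q}(R)$ vanish for $q\ge 0$, because a single resolution is acyclic.
\item A degree count shows that $E^2_{2,-1}=H_2(\PPP_\bullet)$ is the only contribution in total degree $1$ and is hit by no further differentials.
\end{itemize}

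This yields $\Tor_1^R(R/I_1,\ldots,R/I_n)\simeq H_2(\PPP_\bullet)\simeq H_1(\tilde\PPP_\bullet)$. The last module is visibly $M/P$: $\ker(\tilde\PPP_1\to\tilde\PPP_0=R)=M$, and the image of $\tilde\PPP_2=\bigoplus_{i<j}I_iI_j\,e_i\wedge e_j$ is $P$.

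Your diagram chase is essentially the hands-on version of that computation:
\begin{itemize}
\item Your inclusion $K\subseteq B_1$, i.e.\ $H_1(F^{(j)}_\bullet)=0$ for each $j$, plays the role of $\HH_{1,0}(R)=0$.
\item Your computation of $\pi(B_1)$ from the cross terms $F^{(i)}_1\otimes F^{(j)}_1$ recovers the differential $\tilde\PPP_2\to\tilde\PPP_1$.
\end{itemize}
Incidentally, $K\subseteq B_1$ uses only exactness at $F^{(j)}_1$. The normalization $F^{(j)}_0=R$ is what gives $T_0=R$ and makes $\pi$ surject onto $I_1\oplus\cdots\oplus I_n$.

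What your route buys: it is elementary and self-contained, needing only that $\Tor$ is independent of the chosen resolutions. It also covers $n=1$, where the remark's hypothesis $p<n$ is unavailable.

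What the paper's route buys: uniformity. The same spectral sequence gives $\Tor_i^R(R/I_1,\ldots,R/I_n)\simeq H_{i+1}(\PPP_\bullet)$ for all $i\le p$, once every subfamily of at most $p$ ideals is Tor-independent. Extending your direct chase to higher $i$ would require controlling many more mixed tensor terms by hand.
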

The example of three ideals illustrates a more general pattern. If $I_1,\ldots, I_n$ are ideals of $R$, similarly to the maps $\varphi$ and $\psi$ above, we introduce complexes $\SSS^\bullet$ and $\PPP_\bullet$, where $\SSS^0=R/I_1\cdots I_n$ and $\PPP_0=0$ and for $p>0$, 
$$
\SSS^p =\bigoplus_{i_1<\cdots <i_{p}}R/(I_{i_1}+\cdots +I_{i_{p}})\; e_{i_1}\wedge \cdots \wedge e_{i_p}\quad\mbox{and}\quad\PPP_p =\bigoplus_{i_1<\cdots <i_{p}}R/I_{i_1}\cdots I_{i_{p}} \; e_{i_1}\wedge \cdots \wedge e_{i_p}.
$$

What allows us to investigate the (co)homology of such complexes is the homological counterpart of the theory developed in \cite{CHN} for cohomological multiple complexes; see Section \ref{multcomp}. 

These constructions provide a relation between the (co)homologies of $\SSS^\bullet$, of $\PPP_\bullet$, and the multiple Tor modules. 
Defining Tor-independence of ideals $I_1,\ldots ,I_n$ by the vanishing of $\Tor^R_{i}(R/I_1,\ldots,R/I_n)$ for every $i>0$ (see Definition \ref{torinddef} and Remark \ref{torinddefformod}), this notion then relates to the (co)homologies of $\PPP_\bullet$ and $\SSS^\bullet$:

\begin{thm}
If any strict subset of  $\{ I_1,\ldots, I_n\}$ is Tor-independent, then
\begin{enumerate}[\rm(1)]
    \item {\rm(Corollary \ref{homologyofSSS})} $\Tor^R_n(R/I_1,\cdots, R/I_n)\simeq\ker(I_1\otimes\cdots\otimes I_n\to I_1\cdots I_n)$ and $$H^i(\SSS^\bullet)\simeq\Tor^R_{n-i}(R/I_1,\ldots,R/I_n), \forall i\geq2.$$
    \item {\rm(Proposition \ref{homologyofPPP})} $H_i(\PPP_\bullet)\simeq \Tor^R_{i-1}(R/I_1,\ldots, R/I_n)$ for all $i\leq n$.
\\ In particular, if any subset of $\{ I_1,\ldots, I_n\}$ is Tor-independent, then:
\item {\rm(Corollary \ref{torind})} the following complex is exact
$$
\xymatrix{
0\ar[r]&R/I_1\cdots I_n\ar[r]&\SSS^{1}\ar[r]&\SSS^{2}\ar[r]&\cdots\ar[r]&\SSS^n \ar[r]&0\\}
$$
\item {\rm(Proposition \ref{homologyofPPP})} and the following complex is exact
$$
\xymatrix{
0\ar[r]&\PPP_{n}\ar[r]&\cdots\ar[r]&\PPP_2\ar[r]&\PPP_1\ar[r]&R/(I_1+\cdots+I_n)\ar[r] & 0.\\}
$$

\end{enumerate}
\end{thm}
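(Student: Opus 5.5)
We treat all four statements through a single multicomplex. Fix free (flat) resolutions $F^{(j)}_\bullet\to R/I_j$ and form $T=F^{(1)}_\bullet\otimes\cdots\otimes F^{(n)}_\bullet$. Its total homology is $\Tor^R_\bullet(R/I_1,\ldots,R/I_n)$ by definition.

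\textbf{The $\SSS^\bullet$ side.} We use the augmentation cubes rather than the resolutions directly. For each $j$ write $F^{(j)}_\bullet=(F^{(j)}_{\geq 1}\to F^{(j)}_0)$ and compare with the two-term complex $I_j\to R$. Tensoring the $n$ two-term complexes $I_j\hookrightarrow R$ gives an $n$-cube. Its vertex indexed by a subset $A$ is $\bigotimes_{j\in A}I_j$. Its total complex computes $\Tor$ of the $R/I_j$ only up to the defect of flatness of the $I_j$. Filtering by the number of factors $I_j$ that occur, and using the homological multiple-complex spectral sequence theory of Section \ref{multcomp}, we obtain a spectral sequence. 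Its $E^1$ terms involve $\Tor$ of strict subfamilies, which vanish in positive degrees by hypothesis. So on every proper face of the cube, $\bigotimes_{j\in A}I_j\to\prod_{j\in A}I_j$ is an isomorphism and the face is quasi-isomorphic to $R/\sum_{j\in A}I_j$.

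Passing to quotients $R/\sum_{A}$ along the cube converts the cube into $\SSS^\bullet$, up to its top vertex. Only the full vertex $I_1\otimes\cdots\otimes I_n$ is not controlled, and its comparison to $I_1\cdots I_n$ produces the shift. This yields $\Tor_n\simeq\ker(I_1\otimes\cdots\otimes I_n\to I_1\cdots I_n)$ and $H^i(\SSS^\bullet)\simeq\Tor_{n-i}$ for $i\geq 2$. The degrees $i=0,1$ are excluded because the top-vertex comparison interferes there.

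\textbf{The $\PPP_\bullet$ side.} Here we instead use the short exact sequences $0\to I_A\to R\to R/I_A$ with $I_A=\prod_{j\in A}I_j$, together with induction on $n$. The base case $n=2$ is the sequence $0\to R/IJ\to R/I\oplus R/J\to R/(I+J)\to 0$ recalled in the introduction, with kernel $\Tor_1$.

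For the inductive step we resolve each $R/I_A$ appearing in $\PPP_\bullet$. Tor-independence of $\{I_j\}_{j\in A}$ for $A$ proper means $R/I_A$ is computed by $\bigotimes_{j\in A}F^{(j)}_\bullet$ truncated suitably, using $\Tor_0=R/\sum$ and $I_A=\bigotimes I_j$. We then build a double complex whose columns resolve the $\PPP_p$ and whose rows are Koszul-type alternating sums over subsets. Comparing the two spectral sequences, one converges to $H_\bullet(\PPP)$ and the other to $\Tor_\bullet$ shifted by one, which gives $H_i(\PPP_\bullet)\simeq\Tor_{i-1}$ for $i\leq n$.

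\textbf{The exactness statements (3) and (4).} If every subset is Tor-independent, then all $\Tor_{>0}$ vanish. By (1) the complex $\SSS^\bullet$ is exact in degrees $\geq 2$. By (2) the complex $\PPP_\bullet$ is exact in degrees between $2$ and $n$, and $H_1(\PPP)\simeq\Tor_0=R/\sum I_j$ gives the augmented exactness in (4).

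For the remaining degrees of $\SSS^\bullet$ we argue directly. Exactness at $\SSS^0$ and $\SSS^1$ amounts to $I_1\cdots I_n=\bigcap_j I_j$ together with exactness of $\bigoplus R/I_j\to\bigoplus R/(I_i+I_j)$. Both follow by induction on $n$ from the $n=2$ identity $I\cap J=IJ$, applied to $I_1\cdots I_{n-1}$ and $I_n$. This step needs that the family $\{I_1\cdots I_{n-1},I_n\}$ is Tor-independent, which we get from the hypothesis via the factorization $R/(I_1\cdots I_{n-1})\simeq\Tor$ complexes above.

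\textbf{Main obstacle.} The hard step is the spectral-sequence bookkeeping. One must verify that the edge maps in the comparison are exactly the differentials of $\SSS^\bullet$ and $\PPP_\bullet$ given by the signed matrices. One must also identify precisely which low-degree terms survive, so as to pin down the boundary ranges $i\geq 2$ and $i\leq n$. I would set this up carefully with the multicomplex filtration by number of indices.
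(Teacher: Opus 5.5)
\textbf{The $\SSS^\bullet$ half does not work as described, and the formula it aims at is off by one.}

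Filtering the cube $\bigotimes_j(I_j\hookrightarrow R)$ by the number of factors $I_j$ gives graded pieces $\bigoplus_{|A|=p}\bigotimes_{j\in A}I_j$ with zero differential. So its $E^1$ page consists of these modules themselves, not of Tor modules of subfamilies. The cube also does not compute $\Tor_\bullet(R/I_1,\ldots,R/I_n)$: for $n=2$ it has length $2$ and misses $\Tor_3(R/I_1,R/I_2)\simeq\Tor_1(I_1,I_2)$. Everything is then delegated to ``passing to quotients'' and ``the top-vertex comparison produces the shift'', which is exactly where the proof has to happen.

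The missing idea is to keep the resolution multicomplex $F_{\underline\bullet}=\bigotimes_jF^{(j)}_\bullet$, take its hypercube-augmented interior ${}^+F^{\intF}_\bullet$, and resolve it by the Koszul complex of faces (Proposition \ref{augmentedmainspectral}). Then $E^1_{n-p,q}=\bigoplus_{i_1<\cdots<i_p}\Tor_q(R/I_{i_1},\ldots,R/I_{i_p})$. Under the hypothesis only two pieces survive: row $0$, which is $\SSS^1\to\cdots\to\SSS^n$, and column $0$, which is $\Tor_q(R/I_1,\ldots,R/I_n)$. So the only higher differentials are $d^r\colon H^{n-r}(\SSS^\bullet_-)\to\Tor_{r-1}$. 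The abutment $H_\bullet({}^+F^{\intF}_\bullet)$ vanishes below degree $n-1$. In degrees $n-1$, $n$ and $n+k$ ($k\geq1$) it equals $R/I_1\cdots I_n$, $\ker(I_1\otimes\cdots\otimes I_n\to I_1\cdots I_n)$ and $\Tor_k(I_1,\ldots,I_n)$ respectively.

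Reading this off gives:
\begin{itemize}
\item $\Tor_{n+k}\simeq H_{n+k}({}^+F^{\intF}_\bullet)$ for $k\geq 0$;
\item the exact sequence $0\to\Tor_{n-1}\to\SSS^0\to H^1(\SSS^\bullet_-)\to\Tor_{n-2}\to0$ for $n\geq 3$;
\item $H^i(\SSS^\bullet)\simeq\Tor_{n-i-1}$ for $2\le i\le n-2$, with $H^{n-1}(\SSS^\bullet)=H^n(\SSS^\bullet)=0$ (compare Corollary \ref{homologyofSSS}).
\end{itemize}
In particular, the formula $H^i(\SSS^\bullet)\simeq\Tor_{n-i}$ that you set out to derive, as printed in the statement, is false at $i=n$. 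The map $\SSS^{n-1}\to\SSS^n$ is onto, whereas $\Tor_0=R/(I_1+\cdots+I_n)$. Your own step (3), ``by (1), $\SSS^\bullet$ is exact in degrees $\geq 2$'', contradicts it. A sketch that leaves the shift to ``the top-vertex comparison'' cannot detect this.

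\textbf{The $\PPP_\bullet$ half follows the paper's route but skips its key input, and the low-degree step in (3) fails.}

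Your double complex is in effect the filtration of the hypercube augmentation of $F_{\underline\bullet}$ by the number of nonzero coordinates (Proposition \ref{fromaugmentedinteriorspectral}). Its total homology is $\Tor_\bullet(R/I_1,\ldots,R/I_n)$ and its $p$-th column is $\bigoplus_{|A|=p}{}^+F^{\intF_A}_\bullet$. For a column with $p<n$ to resolve $\PPP_p$, you need ${}^+F^{\intF_A}_\bullet$ to be acyclic above its bottom degree. That means $\bigotimes_{j\in A}I_j\simeq\prod_{j\in A}I_j$ and $\Tor_k(I_j:j\in A)=0$ for all $k\geq1$. This is the isomorphism $\Tor_{|A|+k}\simeq H_{|A|+k}({}^+F^{\intF_A}_\bullet)$ for all $k\geq0$, applied to the strongly Tor-independent family $A$ (Corollary \ref{torind}). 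It is not contained in your (1), which only records $k=0$. Dimension shifting along $0\to I_j\to R\to R/I_j\to 0$ would also give it, but it has to be argued. Column $n$ has homology $H_\bullet({}^+F^{\intF}_\bullet)$, which is not concentrated in one degree; that is why the conclusion only holds for $i\leq n$.

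Finally, your separate treatment of $\SSS^0,\SSS^1$ in (3) is unnecessary, since the exact sequence above already covers it, and it is also insufficient. Given the inductive hypothesis, exactness at $\SSS^1$ is equivalent to $\bigcap_{j<n}(I_j+I_n)=I_1\cdots I_{n-1}+I_n$, and this does not follow from the ingredients you name. Take $I_1=(x)$, $I_2=(y)$, $I_3=(x+y)$ in $k[x,y]$. Every proper subfamily is Tor-independent, and so is the pair $(I_1I_2,I_3)$. Yet $\SSS^\bullet$ is not exact at $\SSS^1$: the cokernel of $\SSS^0\to\ker(\SSS^1\to\SSS^2)$ is $\Tor_1(R/I_1,R/I_2,R/I_3)\simeq k$. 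Exactness there genuinely requires $\Tor_{n-2}=0$ for the whole family.
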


These homological constructions also connect Tor modules relative to quotients by sums of these ideals to ones relative to quotients by products. Under Tor-independence conditions, it takes the form  of two spectral sequences:

\begin{prop}\label{twospectrals}
If any subset of $\{I_1,\ldots, I_n\}$ is Tor-independent, then for any $R$-module $M$, there exist two spectral sequences
\begin{enumerate}[\rm(1)]
    \item {\rm(Corollary \ref{spectraltorfromsumtoproduct})} 
$E^1_{p,q}=\oplus_{i_1<\cdots<i_p}\Tor^R_q(M, R/I_{i_1}+\cdots+I_{i_p})\Rightarrow\Tor^R_{q-p+1}(M, R/I_1\cdots I_n).$
\item {\rm(Corollary \ref{spectraltorfromproducttosum})}
$E^1_{p,q}=\oplus_{i_1<\cdots<i_p}\Tor^R_q(M, R/I_{i_1}\cdots I_{i_p})\Rightarrow\Tor^R_{q+p-1}(M, R/I_1+\cdots+I_n).$

\end{enumerate}
\end{prop}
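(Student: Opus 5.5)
The plan is to derive both spectral sequences from the exactness of the complexes in the previous theorem. We tensor them with a flat (or free) resolution of $M$ and compare the two standard spectral sequences of the resulting double complex.

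\emph{Spectral sequence (1).} Assume every subset of $\{I_1,\ldots,I_n\}$ is Tor-independent. By Corollary \ref{torind} the complex
$$0\to R/I_1\cdots I_n\to \SSS^1\to\cdots\to\SSS^n\to 0$$
is exact. Equivalently, $\SSS^{\ge 1}$ is a right resolution of $R/I_1\cdots I_n$, placed in cohomological degrees $1,\ldots,n$.

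Let $F_\bullet\to M$ be a free resolution and form the double complex $C_{p,q}=F_q\otimes_R\SSS^p$. It has only finitely many nonzero columns $p=1,\ldots,n$, so both spectral sequences converge to the homology of the total complex. We regrade with total degree $q-p$, or after a harmless shift $q-p+1$.

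\begin{itemize}
\item Taking homology in the $\SSS$ direction first: each $F_q$ is flat, so $F_q\otimes\SSS^{\ge1}$ has cohomology $F_q\otimes R/I_1\cdots I_n$, concentrated in $p=1$. Hence the total homology is $H_\bullet(F_\bullet\otimes R/I_1\cdots I_n)=\Tor^R_\bullet(M,R/I_1\cdots I_n)$, with the index shifted so that it lands in degree $q-p+1$ with $p=1$.
\item Taking homology in the $F$ direction first gives $E^1_{p,q}=\Tor_q^R(M,\SSS^p)=\oplus_{i_1<\cdots<i_p}\Tor^R_q(M,R/(I_{i_1}+\cdots+I_{i_p}))$.
\end{itemize}

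This is exactly the sequence in (1), with differentials $d^r$ of bidegree $(r,r-1)$, consistent with the abutment index $q-p+1$.

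\emph{Spectral sequence (2).} We proceed identically using Proposition \ref{homologyofPPP}: the complex $0\to\PPP_n\to\cdots\to\PPP_1$ is exact except at $\PPP_1$, where its homology is $R/(I_1+\cdots+I_n)$. So $\PPP_\bullet$ is a left resolution of $R/(I_1+\cdots+I_n)$ with terms in homological degrees $1,\ldots,n$, though not by flat modules.

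Consider the double complex $F_q\otimes\PPP_p$ with total degree $p+q$. Filtering by $q$ first, flatness of $F_q$ makes the rows exact except at $p=1$. The total homology is therefore $\Tor_{p+q-1}^R(M,R/(I_1+\cdots+I_n))$. The other filtration gives $E^1_{p,q}=\oplus\Tor^R_q(M,R/I_{i_1}\cdots I_{i_p})$ converging to it, which is (2). Convergence holds since the double complex is first-quadrant and bounded in $p$.

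\emph{Main obstacle.} The only real inputs are Corollary \ref{torind} and Proposition \ref{homologyofPPP}, both of which may be taken as given. The main care needed is bookkeeping of degrees and signs: the $\SSS$ complex is cohomological while $F_\bullet$ is homological, so the correct total grading $q-p$ and the shift by $1$ coming from $R/I_1\cdots I_n$ sitting in degree $0$ must be checked. It must also be checked that the row-wise argument uses only flatness of the $F_q$, not of the $\SSS^p$ or $\PPP_p$. Finally, the boundedness in $p$ between $1$ and $n$ must be used to ensure convergence of both filtrations, even though $q$ is unbounded.
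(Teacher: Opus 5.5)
Your proposal is correct and follows the paper's own argument: tensor $\SSS^\bullet_-$ (respectively $\PPP_\bullet$) with a free resolution of $M$, and compare the two spectral sequences of the resulting double complex. The exactness from Corollary \ref{torind}(2) (respectively Proposition \ref{homologyofPPP}), together with flatness of the $F_q$, collapses one of them onto $\Tor^R_\bullet(M,R/I_1\cdots I_n)$ (respectively $\Tor^R_{\bullet-1}(M,R/(I_1+\cdots+I_n))$), and your degree bookkeeping and convergence argument are sound.
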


Since ideals generated by sets of variables with empty two-by-two intersections in a polynomial ring (over any ring and with arbitrary grading) are Tor-independent, it allows us to apply the spectral sequences in Proposition \ref{twospectrals} to prove that Tor modules with respect to sums have the same support region (i.e., the non-vanishing degrees of their graded components) as Tor modules with respect to products, see Proposition \ref{supportoftors}. This fact has consequences in the study of extensions of the Castelnuovo-Mumford regularity to a multigraded setting and is in part at the origin of these investigations.



  


\section{Multiple Tor modules}\label{multipletormodules}

\begin{def}\label{defmtor} Let $M_{1},\ldots
,M_{s}$ be $R$-modules and $F_{\bullet}$ be  the tensor products 
over $R$ of the canonical free resolutions  of $M_{1},\ldots ,M_{s}$. Then 
$$
\Tor_{i}^{R}(M_{1},\ldots ,M_{s}):=H_{i}(F_{\bullet}).
$$
\end{def}

We note that the tensor product over the empty set (i.e., $s=0$) is $R$, while for $s=1$, $\Tor_0^R(M_1)=M_1$ and $\Tor_i^R(M_1)=0$ for $i>0$.

\begin{prop}\label{A.2} Let $M$ be an $R$-module and $F_{\bullet}$ be a 
complex of flat $R$-modules. If $L_{\bullet}$ is a free resolution of $M$, there is a 
natural isomorphism, 
$$
H_{i}(F_{\bullet}\otimes_{R}L_{\bullet})\simeq H_{i}(F_{\bullet}\otimes_{R}M).
$$
\end{prop}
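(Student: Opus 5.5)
The plan is to compare $F_{\bullet}\otimes_R L_{\bullet}$ and $F_{\bullet}\otimes_R M$ through the augmentation $\epsilon: L_{\bullet}\to M$, with $M$ regarded as a complex concentrated in degree $0$. This augmentation induces a natural map of complexes
$$
1\otimes\epsilon:\ F_{\bullet}\otimes_R L_{\bullet}\longrightarrow F_{\bullet}\otimes_R M ,
$$
and the claim is that it is a quasi-isomorphism. Naturality, in $M$ and in the choice of $L_\bullet$ up to homotopy, is then automatic: comparison maps between free resolutions are unique up to homotopy, so the induced maps on homology do not depend on the choices.

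\textbf{Reducing to acyclicity of a total complex.} The map $1\otimes\epsilon$ is the total complex of the morphism of double complexes $F_p\otimes L_q\to F_p\otimes M$, where the target is concentrated in $q=0$. Its mapping cone is, up to shift, the total complex of the double complex $F_p\otimes_R \widetilde L_q$, where $\widetilde L_\bullet$ is the augmented resolution $\cdots\to L_1\to L_0\to M\to 0$. This augmented complex is exact.

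\textbf{Exactness of the cone.} For each fixed $p$, the row $F_p\otimes_R \widetilde L_\bullet$ is exact, because $F_p$ is flat and $\widetilde L_\bullet$ is exact. The double complex is bounded in the $q$ direction, with $q\ge -1$, but $F_\bullet$ may be unbounded in both directions. So I would check acyclicity of the total complex directly rather than through a spectral sequence. In each total degree $n$ the relevant terms are $F_{n-q}\otimes \widetilde L_q$ for $q\ge -1$, and the total complex is a direct sum. Take a cycle $z=\sum_{q\ge -1} z_q$; it has finitely many nonzero components. Let $Q$ be the largest index with $z_Q\neq 0$. Then $z_Q$ is a cycle for the $\widetilde L$-differential, so by row exactness it is a boundary, $z_Q=d''(w_{Q+1})$. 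Subtracting $D(w_{Q+1})$ from $z$ kills the top component, and descending induction on $Q$ eventually terminates, since $q\ge -1$ and each step lowers the top index. This shows $z$ is a boundary, so the total complex is acyclic, the cone is acyclic, and $1\otimes\epsilon$ is a quasi-isomorphism.

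\textbf{Main obstacle.} The main point needing care is the unboundedness of $F_\bullet$. A naive spectral sequence argument needs convergence, and convergence could fail for an arbitrary complex $F_\bullet$. The staircase argument above sidesteps this. It works because total degrees are computed with direct sums and $\widetilde L$ is bounded below, so every cycle has only finitely many components and the induction always descends a finite distance; no completeness of the filtration is needed. I would also make sure the cone is identified with the correct sign conventions, but that is routine.
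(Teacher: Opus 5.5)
There is a genuine gap, and it sits exactly at the point you flag as delicate. Your mapping-cone reduction is fine, and so is the exactness of each $F_p\otimes_R\widetilde L_\bullet$. The staircase, however, runs in the wrong direction. Write $D=d'\pm d''$, with $d'$ the differential of $F_\bullet$ (keeping $q$) and $d''$ that of $\widetilde L_\bullet$ (lowering $q$). For a cycle $z=\sum_q z_q$ with $z_q\in F_{n-q}\otimes\widetilde L_q$, the component of $D(z)$ in $F_{n-1-q}\otimes\widetilde L_q$ is $d'(z_q)\pm d''(z_{q+1})$. At the top index $Q$ this only gives $d'(z_Q)=0$, a cycle condition in the $F$-direction, where nothing is exact. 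The component that is a $d''$-cycle is the \emph{lowest} one. Writing it as $d''(w)$ and subtracting $D(w)$ creates a correction $-d'(w)$ one step \emph{higher} in $q$, so the top index goes up, not down. The induction therefore moves upward. Since $\widetilde L_\bullet$ need not be bounded above, it terminates only if $q$ is bounded above in each total degree, i.e.\ if $F_p=0$ for $p\ll 0$. This is the acyclic assembly lemma: exactness in a direction in which the double complex is bounded below makes the product totalization acyclic, but the direct-sum totalization needs boundedness in the other direction.

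In fact the conclusion fails for unbounded $F_\bullet$, so no argument can avoid a boundedness hypothesis. Over $R=\ZZ/4\ZZ$, take $F_p=R$ for all $p\in\ZZ$ with every differential equal to multiplication by $2$, $M=R/2R$, and $L_\bullet\colon\cdots\xrightarrow{2}R\xrightarrow{2}R\to 0$. Then $F_\bullet$ is acyclic and $L_\bullet$ is a bounded below complex of free modules, so $F_\bullet\otimes_R L_\bullet$ is acyclic (filter by $L$-degree; the subquotients are shifts of $F_\bullet$). On the other hand, $F_\bullet\otimes_R M$ has zero differentials and $H_i=\ZZ/2\ZZ$ for every $i$. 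The paper's proof uses the spectral sequence $E^1_{p,q}=H_q(F_p\otimes L_\bullet)\simeq F_p\otimes H_q(L_\bullet)$, which collapses onto the row $q=0$. It tacitly assumes $F_\bullet$ is bounded below, as it is in every application (a tensor product of resolutions), so that the double complex is first-quadrant and the spectral sequence converges. Under that hypothesis, say $F_p=0$ for $p<0$, your cone argument is repaired by running the staircase from $q=-1$ upward. In total degree $n$ only $-1\le q\le n$ occur, so it stops after finitely many steps, and it then amounts to the same argument as the paper's.
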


\begin{proof} Consider the spectral sequence with
$E^{1}_{p,q}=H_{q}(F_{p}\otimes_{R}L_{\bullet})\simeq F_{p}\otimes_{R}H_{q}(L_{\bullet})$
and $E^{2}_{p,q}=H_{p}(E^{1}_{\bullet, q})$ that 
abuts to $H_{p}(F_{\bullet}\otimes_{R}L_{\bullet})$ and note that by
hypothesis $H_{q}(L_{\bullet})=0$ for $q\not= 0$ and
$H_{0}(L_{\bullet})=M$.\end{proof}

Notice that if $F_{\bullet}'$ is the tensor product of any choice of flat resolutions of  
$M_{1},\ldots ,M_{s}$, then $H_{i}(F_{\bullet}')\simeq\Tor_{i}^{R}(M_{1},\ldots ,M_{s})$. Also,

\begin{cor}\label{A.3} Let $M,M_{1},\ldots
,M_{s}$ be $R$-modules and $F_{\bullet}$ be  the tensor products 
over $R$ of free resolutions of $M_{1},\ldots ,M_{s}$. Then,
$$
\Tor_{i}^{R}(M_{1},\ldots ,M_{s},M)\simeq H_{i}(F_{\bullet}\otimes_{R}M).
$$
\end{cor}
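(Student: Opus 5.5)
The plan is to deduce the corollary directly from Proposition \ref{A.2}, applied with the flat complex $F_\bullet$ and a free resolution of the last module $M$.

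Let $L_\bullet$ be a free resolution of $M$. We may take it to be the canonical one, since by the remark following Proposition \ref{A.2} any choice of flat resolutions computes the multiple Tor modules up to isomorphism. The complex $F_\bullet\otimes_R L_\bullet$ is the total complex of the tensor product of free resolutions of $M_1,\ldots,M_s,M$. Hence, by Definition \ref{defmtor} together with that remark,
\[
H_i(F_\bullet\otimes_R L_\bullet)\simeq \Tor_i^R(M_1,\ldots,M_s,M).
\]

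Next, $F_\bullet$ is a complex of free, hence flat, $R$-modules: each term is a finite direct sum of tensor products of free modules, and such tensor products are free. So Proposition \ref{A.2} applies and gives the natural isomorphism
\[
H_i(F_\bullet\otimes_R L_\bullet)\simeq H_i(F_\bullet\otimes_R M).
\]
Combining the two displays proves the claim.

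The only point needing care is the identification of $F_\bullet\otimes_R L_\bullet$ with the multicomplex defining $\Tor_i^R(M_1,\ldots,M_s,M)$. This amounts to the associativity of total tensor products of complexes, together with the independence of the chosen resolutions, and both are standard. No genuine obstacle is expected. The degenerate case $s=0$ is consistent with the statement: there $F_\bullet=R$ and both sides equal $\Tor_i^R(M)$, which is $M$ for $i=0$ and zero otherwise.
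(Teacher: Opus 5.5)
Your proposal is correct and is essentially the paper's argument: the corollary is stated as an immediate consequence of Proposition \ref{A.2}, applied to the flat complex $F_\bullet$ and a free resolution $L_\bullet$ of $M$. As you say, one identifies $F_\bullet\otimes_R L_\bullet$ with a tensor product of free resolutions of $M_1,\ldots,M_s,M$, whose homology is $\Tor_i^R(M_1,\ldots,M_s,M)$ by the remark that any choice of flat resolutions may be used.
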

In particular $\Tor_{i}^{R}(M_{1},\ldots ,M_{s},R)\simeq \Tor_{i}^{R}(M_{1},\ldots ,M_{s})$.

If $(R,\im )$ is local regular of dimension $n$, this result implies that 
$\Tor_{i}^{R}(M_{1},\ldots ,M_{s})=0$ for $i>n(s-1)$. Also notice in this case 
that if $M_{1}=\cdots =M_{s}=R/\im$, $\Tor_{n(s-1)}^{R}(M_{1},\ldots ,M_{s})\simeq
R/\im \not= 0$.
\medskip

\begin{prop}\label{A.4} Let $M_{1},\ldots
,M_{s},N_{1},\ldots ,N_{t}$ be $R$-modules. There exists a
spectral sequence
$$
E^{2}_{p,q}=\Tor_{p}^{R}(M_{1},\ldots ,M_{s},\Tor_{q}^{R}(N_{1},\ldots
,N_{t}))\ 
\Rightarrow \ \Tor_{p+q}^{R}(M_{1},\ldots ,M_{s},N_{1},\ldots
,N_{t}).
$$
\end{prop}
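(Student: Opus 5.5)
We will realize both sides through a single double complex and read off the spectral sequence from one of its two standard filtrations. Let $F_\bullet$ be the tensor product over $R$ of the canonical free resolutions of $M_1,\ldots,M_s$, and $G_\bullet$ the tensor product of the canonical free resolutions of $N_1,\ldots,N_t$. By Definition \ref{defmtor}, the total complex of the double complex $C_{p,q}=F_p\otimes_R G_q$ is exactly the tensor product of the free resolutions of all $s+t$ modules. Its homology is therefore $\Tor^R_{\bullet}(M_1,\ldots,M_s,N_1,\ldots,N_t)$. By construction $H_q(G_\bullet)=\Tor^R_q(N_1,\ldots,N_t)$.

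Next we use the filtration by the $F$-degree $p$. The first page is obtained by taking homology in the $G$-direction, so $E^1_{p,q}=H_q(F_p\otimes_R G_\bullet)$. Since $F_p$ is free, hence flat, $F_p\otimes_R-$ commutes with homology. This gives $E^1_{p,q}\simeq F_p\otimes_R \Tor^R_q(N_1,\ldots,N_t)$, and the $d^1$ differential is $\partial^F\otimes 1$. Taking homology in $p$ then yields
$$E^2_{p,q}=H_p\bigl(F_\bullet\otimes_R \Tor^R_q(N_1,\ldots,N_t)\bigr).$$
By Corollary \ref{A.3}, applied with $M=\Tor^R_q(N_1,\ldots,N_t)$, this is $\Tor^R_p(M_1,\ldots,M_s,\Tor^R_q(N_1,\ldots,N_t))$, which is the stated $E^2$ term.

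The remaining point is convergence. Both $F_\bullet$ and $G_\bullet$ live in nonnegative degrees, so the double complex sits in the first quadrant. Each total degree involves only finitely many terms, so the filtration is finite in each degree and the spectral sequence converges to the homology of the total complex. That homology is $\Tor^R_{p+q}(M_1,\ldots,M_s,N_1,\ldots,N_t)$, as identified at the start.

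The main point needing care is the identification of the total complex with the resolution defining the $(s+t)$-fold Tor. This holds because the tensor product of complexes is associative, and the sign conventions do not affect homology up to isomorphism. No flatness issue arises beyond the exactness of $F_p\otimes_R-$.
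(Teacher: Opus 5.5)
Your proposal is correct and follows essentially the same route as the paper: form the double complex $F_p\otimes_R G_q$ and use the filtration whose second page is $H_p(F_\bullet\otimes_R H_q(G_\bullet))$. Then identify this page via Corollary \ref{A.3}. Your added details on the flatness of $F_p$ at the $E^1$ stage and on first-quadrant convergence are points the paper leaves implicit.
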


\begin{proof} Let $F_{\bullet}$ and $L_{\bullet}$ be  the tensor products 
over $R$ of free resolutions of $M_{1},\ldots ,M_{s}$ and $N_{1},\ldots ,N_{t}$, 
respectively. 

The double complex $C_\bullet$ with $C_{p,q}:=F_{p}\otimes_{R}L_{q}$ gives 
rise to two spectral sequences both abutting to $H_{\bullet}({\rm Tot}(C_\bullet))\simeq \Tor_{\bullet}^{R}(M_{1},\ldots ,M_{s}
,N_{1},\ldots,N_{t})$. One of them has as second page
$E^{2}_{p,q}=H_{p}(F_{\bullet}\otimes_{R}H_{q}(L_{\bullet}))$
and the result follows from Corollary \ref{A.3}.\end{proof}

Other spectral sequences for multiple Tor modules appear in  \cite[\S 6]{EGAIII}.

\begin{prop}\label{A.5} If $S\rightarrow R$ is a flat map and 
$M_{1},\ldots,M_{s}$ are $S$-modules, then
$$
\Tor_{i}^{R}(M_{1}\otimes_{S}R,\ldots ,M_{s}\otimes_{S}R)\simeq
\Tor_{i}^{S}(M_{1},\ldots ,M_{s})\otimes_{S}R
$$
for every $i$.
\end{prop}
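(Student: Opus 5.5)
The argument is a direct flat base change of the defining complexes. For each $j=1,\ldots,s$ I will choose a free $S$-resolution $G^{(j)}_\bullet\to M_j$ and let $G_\bullet:=G^{(1)}_\bullet\otimes_S\cdots\otimes_S G^{(s)}_\bullet$ be the total tensor complex. By Definition \ref{defmtor}, together with the remark after Proposition \ref{A.2} that any choice of flat resolutions computes the same homology, we have $H_i(G_\bullet)\simeq \Tor_i^S(M_1,\ldots,M_s)$.

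Next I apply $-\otimes_S R$. Since $S\to R$ is flat, each complex $G^{(j)}_\bullet\otimes_S R$ is exact in positive degrees with $H_0=M_j\otimes_S R$, and its terms are free $R$-modules. It is therefore a free $R$-resolution of $M_j\otimes_S R$. The tensor product of these complexes over $R$ is $R$-flat, and by the same remark its homology is $\Tor_i^R(M_1\otimes_S R,\ldots,M_s\otimes_S R)$.

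The key step is the natural isomorphism of complexes
$$(G^{(1)}_\bullet\otimes_S R)\otimes_R\cdots\otimes_R(G^{(s)}_\bullet\otimes_S R)\simeq G_\bullet\otimes_S R.$$
I will check this degreewise, using the standard isomorphism $(A\otimes_S R)\otimes_R(B\otimes_S R)\simeq (A\otimes_S B)\otimes_S R$ and induction on $s$. It is compatible with the differentials, including the Koszul signs, because it is natural in $A$ and $B$.

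To finish, flatness of $R$ over $S$ means $-\otimes_S R$ is exact, so it commutes with homology: $H_i(G_\bullet\otimes_S R)\simeq H_i(G_\bullet)\otimes_S R$. Combining this with the isomorphism of complexes gives the claim for every $i$. There is no serious obstacle here. The only points needing care are identifying the base-changed resolutions as $R$-resolutions, which is where flatness is used, and verifying that the identification of tensor products respects the total differentials.
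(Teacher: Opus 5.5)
Your proposal is correct and is essentially the paper's own argument. Both base change free $S$-resolutions to free $R$-resolutions using flatness, identify $(G^{(1)}_\bullet\otimes_S\cdots\otimes_S G^{(s)}_\bullet)\otimes_S R$ with the $R$-tensor product of the base-changed resolutions, and pass homology through the exact functor $-\otimes_S R$; the paper leaves that last step implicit, whereas you state it explicitly.
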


\begin{proof} This is a special case of \cite[6.9.2]{EGAIII}. It follows by taking a free resolutions $F^{j}_\bullet$ of $M_j$ for every $j$  and noticing that $(F^{1}_\bullet\otimes_S\cdots\otimes_SF^{s}_\bullet)\otimes_SR\simeq(F^{1}_\bullet\otimes_SR)\otimes_R\cdots\otimes_R(F^{s}_\bullet\otimes_SR)$, where each $F^j_\bullet\otimes_SR$ is a flat $R$-resolution of $M_j\otimes_SR$.\end{proof}

\begin{rem}\label{A.6} If $I_{1},\ldots ,I_{s}$ are ideals in $R$, then
$\Tor_{1}^{R}(R/I_{1},\ldots ,R/I_{s})$ admits the following
description that generalizes the isomorphism
$\Tor_{1}^{R}(R/I,R/J)\simeq (I\cap J)/IJ$. 

Let $M$ be the submodule of $I_{1}\oplus \cdots \oplus I_{s}$ of
tuples $x=(x_{1},\ldots ,x_{s})$ such that $x_{1}+\cdots +x_{s}=0$. Then
$M$ contains the submodule $P$ generated by the tuples $x$ such that
$x_{\ell}=0$ except for two indices $i$ and $j$ and $x_{i}=-x_{j}\in I_{i}I_{j}$.
Then,
$$
\Tor_{1}^{R}(R/I_{1},\ldots ,R/I_{s})\simeq M/P.
$$

This follows from Remark \ref{descriptionofmultitors} with $p=1$, which also describes some higher Tor modules similarly, under some additional hypothesis.
\end{rem}

\begin{thm}\label{rigtor} Assume that $R$ is a regular local ring containing a field
and $M_{1},\ldots ,M_{s}$ are non zero finitely generated
  $R$-modules. Then,\smallskip

\begin{enumerate}[\rm (1)]
\item \begin{itemize}
\item[(a)] $\Tor_{i}^{R}(M_{1},\ldots ,M_{s})=0$ implies 
  $\Tor_{j}^{R}(M_{1},\ldots ,M_{s})=0$  for all $j\geq i$.\smallskip
  
\item[(b)] For any $i$ and $t<s$, $\Tor_{i}^{R}(M_{1},\ldots ,M_{s})= 0$ implies 
  $\Tor_{i}^{R}(M_{1},\ldots ,M_{t})= 0$.\smallskip
\end{itemize}

\item Let $j:=\max\{ i\ |\ \Tor_{i}^{R}(M_{1},\ldots ,M_{s})\not= 0\}$. Then
$$
\pd M_{1}+\cdots +\pd M_{s}=\dim R+j-\varepsilon ,
$$
with $0\leq \varepsilon \leq \dim \Tor_{j}^{R}(M_{1},\ldots ,M_{s})$. Let
$\varepsilon_{0}:=\min_{i}\{ \depth  \Tor_{j-i}^{R}(M_{1},\ldots ,M_{s})
+i\}$, then $\varepsilon \geq  \varepsilon_{0}$ and  equality holds if $\varepsilon_{0}=\depth  \Tor_{j}^{R}
(M_{1},\ldots ,M_{s})$.
\end{enumerate}
\end{thm}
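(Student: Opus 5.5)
The plan is to prove everything by \emph{reduction to the diagonal}, which turns multiple Tor modules into Koszul homology over a bigger regular local ring.

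\textbf{Step 1: reduce to $R=k[[x_1,\ldots,x_n]]$.} By Proposition \ref{A.5} applied to the faithfully flat map $R\to\widehat R$, vanishing of the modules $\Tor_i^R(M_1,\ldots,M_s)$ and their depths and dimensions are unchanged by completion. Projective dimensions are unchanged as well. Since $R$ contains a field, Cohen's structure theorem gives $\widehat R\simeq k[[x_1,\ldots,x_n]]$ with $n=\dim R$. So we may assume $R$ has this form.

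\textbf{Step 2: set up the diagonal.} Let $A=k[[x^{(1)},\ldots,x^{(s)}]]$, a power series ring in $sn$ variables, and let $N=M_1\widehat\otimes_k\cdots\widehat\otimes_k M_s$, a finitely generated $A$-module. The diagonal $R=A/\Delta$ is cut out by the regular sequence $\underline{y}$ of the $m=n(s-1)$ elements $x^{(1)}_\ell-x^{(r)}_\ell$ for $2\le r\le s$ and $1\le\ell\le n$.

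Take free $R$-resolutions $F^r_\bullet$ of the $M_r$. Then $F^1_\bullet\widehat\otimes_k\cdots\widehat\otimes_k F^s_\bullet$ is an $A$-free resolution of $N$. Tensoring it with $A/\Delta$ gives back $F^1_\bullet\otimes_R\cdots\otimes_R F^s_\bullet$. Hence
\[
\Tor_i^R(M_1,\ldots,M_s)\simeq\Tor_i^A(N,A/\Delta)\simeq H_i(\underline{y};N).
\]
This is the main technical point. The delicate part is the identification $(F\widehat\otimes_k G)\otimes_A A/\Delta\simeq F\otimes_R G$ for complete tensor products of finite free modules; I would check it first on $R\widehat\otimes_k R$ and then extend by additivity.

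\textbf{Step 3: part (1).} Koszul homology is rigid: if $H_i(\underline{y};N)=0$ for a finitely generated $N$ over a local ring, then $H_j(\underline{y};N)=0$ for all $j\ge i$. This holds because $H_i=0$ forces $i>m-\operatorname{grade}(\Delta,N)$, and vanishing of Koszul homology is governed exactly by that grade. This proves (a).

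For (b), order $\underline{y}$ so that its first $n(t-1)$ elements $\underline{y}'$ involve only $x^{(1)},\ldots,x^{(t)}$. The long exact sequences $H_i(\underline{y}'')\xrightarrow{\pm y}H_i(\underline{y}'')\to H_i(\underline{y}''',y)\to\cdots$ together with Nakayama show that vanishing of $H_i(\underline{y};N)$ implies vanishing of $H_i(\underline{y}';N)$. Moreover
\[
H_i(\underline{y}';N)\simeq\Tor_i^R(M_1,\ldots,M_t)\,\widehat\otimes_k\, M_{t+1}\widehat\otimes_k\cdots\widehat\otimes_k M_s .
\]
Since the last $s-t$ factors are nonzero, this module vanishes exactly when $\Tor_i^R(M_1,\ldots,M_t)=0$.

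\textbf{Step 4: part (2).} The depth-sensitivity of Koszul homology gives $j=m-\operatorname{grade}(\Delta,N)$. Also $\depth_A N=\sum_r\depth M_r=sn-\sum_r\pd M_r$, using Auslander--Buchsbaum for each $M_r$ and additivity of depth under $\widehat\otimes_k$. Substituting,
\[
\varepsilon=n+j-\sum_r\pd M_r=\depth_A N-\operatorname{grade}(\Delta,N)\ge 0 .
\]
For the upper bound $\varepsilon\le\dim H_j$, I would use the standard inequality $\depth N\le\operatorname{grade}(\Delta,N)+\dim H_{m-g}(\underline{y};N)$, where $g=\operatorname{grade}(\Delta,N)$. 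To obtain it, reduce modulo a maximal $N$-regular sequence inside $\Delta$ and use $\depth\le\dim\operatorname{Ass}$.

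For the lower bound $\varepsilon\ge\varepsilon_0$, apply the depth lemma to the Koszul complex $0\to N\to N^m\to\cdots\to N^m\to N\to0$, whose terms all have depth $\depth N$ and whose homologies are the modules $\Tor_{j-i}^R$. Chasing depths along the truncations gives
\[
\depth N-g\ \ge\ \min_i\{\depth H_{j-i}+i\}=\varepsilon_0 .
\]
When $\varepsilon_0=\depth H_j$, the same chase applied to the top end ($H_j=\ker$ of the last differential, up to a shift by $g$) gives the reverse inequality. I expect this depth bookkeeping, especially the equality case, to be the main obstacle after Step 2.
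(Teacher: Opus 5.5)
Your Steps 1--3 follow the paper's proof: completion and Cohen's theorem, the diagonal in the completed tensor product, $\Tor_i^R(M_1,\ldots,M_s)\simeq H_i(\underline y;N)$, and the Nakayama argument that removes the $y$'s one at a time for (1)(b).

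One correction in (1)(a). Depth sensitivity only says that the \emph{top} nonvanishing Koszul homology sits in degree $m-g$. That an isolated vanishing $H_i(\underline y;N)=0$ forces $i>m-g$ is exactly the rigidity theorem \cite[1.6.31]{BH}. It is proved by the same long-exact-sequence and Nakayama induction you use in (b), so cite it or prove it that way; it cannot be deduced from the grade alone.

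In Step 4, the identity $\varepsilon=\depth_AN-\operatorname{grade}(\Delta,N)$ is correct and gives $\varepsilon\ge0$ at once. Your bound $\varepsilon\le\dim\Tor_j$ is also valid: it uses $H_{m-g}(\underline y;N)\simeq\Hom_A(A/\Delta,N/\underline zN)$ and $\depth X\le\dim A/\mathfrak p$ for $\mathfrak p\in\operatorname{Ass}X$. This is a more elementary replacement for the paper's use of Schenzel's lemma on the $E^\infty$ terms.

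The genuine gap is the lower bound $\varepsilon\ge\varepsilon_0$. Put $d=\depth_AN$, $g=\operatorname{grade}(\Delta,N)$, $H_p=H_p(\underline y;N)$ and $j=m-g$. The inequality then says that some $p$ has $\depth H_p-p\le d-m$, which is an \emph{upper} bound on the depth of some homology module. No depth-lemma chase that uses only the depths of the terms, the length $m$, the index $j$, and the depths of the $H_p$ can give this. To see why, let $y\in\mathfrak m_A$ be $N$-regular and consider
\[
0\longrightarrow N\xrightarrow{\ u\mapsto(u,0)\ }N\oplus N\xrightarrow{\ (a,b)\mapsto yb\ }N\longrightarrow 0 .
\]
All its terms have depth $d$, $H_2=H_1=0$, and $H_0=N/yN$ has depth $d-1$. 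With $m=2$ and $j=0$ your inequality would read $d-2\ge d-1$. Your chase applies verbatim to this complex, so it cannot prove the claim. What fails is that the first differential has a unit entry. The missing input is that the Koszul differentials have entries in $\mathfrak m_A$, and this can only enter through Nakayama's lemma.

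The paper supplies this input with a double complex. Let $T^\bullet$ be the totalization of $\Hom_A(K_\bullet(\underline y;N),I^\bullet)$, where $I^\bullet$ is an injective resolution of $\omega_A$.
\begin{itemize}
\item Filtering so that $E_1^{p,q}=\Ext^q_A(N,\omega_A)^{\binom{m}{p}}$, the terms vanish for $q>\pd_AN$ or $p>m$. The corner $E_2^{m,\pd_AN}=\Ext^{\pd_AN}_A(N,\omega_A)/\Delta\,\Ext^{\pd_AN}_A(N,\omega_A)$ is nonzero by Nakayama and survives. Hence $H^{m+\pd_AN}(T^\bullet)\neq0$.
\item The other filtration has $E_2^{p,q}=\Ext^q_A(H_p,\omega_A)$, which vanishes for $q>sn-\depth H_p$.
\end{itemize}
Comparing the two, some $p$ satisfies $m+sn-d=m+\pd_AN\le p+sn-\depth H_p$. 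This is exactly $\depth H_p-p\le d-m$, that is, $\varepsilon\ge\varepsilon_0$.

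A variant avoids the Koszul complex. Run the same comparison over $R$ on the minimal complex $F^1_\bullet\otimes_R\cdots\otimes_RF^s_\bullet$ of length $\sum_r\pd M_r$; its $R$-dual has nonzero top cohomology by Nakayama.

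Once the lower bound is established, your top-end chase for the equality case goes through. That direction, showing $\depth H_j\ge d-g$ when $\varepsilon_0=\depth H_j$, only needs lower bounds on depths of cycles and boundaries.
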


\begin{proof} First we complete $R$ for the $\im$-adic filtration (where $\im$ is the 
maximal ideal of $R$), and use Cohen 
structure theorem to reduce to the case of a power series ring over $k:=R/\im$. Notice
that $R\rightarrow \hat R$ is flat and as the $M_{i}$'s are finite $\hat M_{i}=M_{i}
\otimes_{R}\hat R$, $\dim_{R}M_{i}=\dim_{\hat R}\hat M_{i}$ and  $\depth_{R} 
M_{i}=\depth_{\hat R}\hat M_{i}$ for every $i$. 

We let $n:=\dim R$, and may assume that $R=k[[X_{1},\ldots ,X_{n}]]$.

Consider $S$ the completed tensor product over $k$ of $s$ copies of $R$ (see \cite{Ser} for such tensor products). Notice
that  $S\simeq k[[X_{i,j}\ |\ 1\leq i\leq n,\ 1\leq j\leq s ]]$, if
we set $X_{i,j}:=1\otimes \cdots \otimes 1 \otimes
X_{i}\otimes 1\cdots \otimes 1$ where $X_{i}$ is at the $j$-th
position. The ring $S$ is local regular with maximal ideal $\im_{S}=(X_{i,j})=
\idd_{S}+\im /\idd_{S}$,  where $\idd_{S}$ defined by the exact sequence
$$
0\rightarrow \idd_{S}\rightarrow S\buildrel{\rm mult}\over\longrightarrow R\rightarrow 0.
$$
Notice that $\idd_{S}$ is generated by a regular sequence, for instance 
$f:=(X_{i,j}-X_{i,(j+1)}\ |\ 1\leq i\leq n,\
1\leq j<s )$ is a minimal generating $n(s-1)$-tuple. 
Considering $R$ as an $S$-module via the above exact sequence, for any tuple
$L_{1},\ldots ,L_{t}$ of free $R$-modules, there is a natural isomorphism
$$
L_{1}\otimes_{R}\cdots \otimes_{R}L_{t}\buildrel{\sim}\over\longrightarrow
(L_{1}\hat\otimes_{k}\cdots \hat\otimes_{k}L_{t})\otimes_{S}R.
$$

We put $P:=M_{1}\hat\otimes_{k}\cdots \hat\otimes_{k}M_{s}$, choose
a minimal free $R$-resolution $F^{(i)}_{\bullet}$  of $M_{i}$, set 
$F^{k}_{\bullet}:=F^{(1)}_{\bullet}\hat\otimes_{k}\cdots
\hat\otimes_{k}F^{(s)}_{\bullet}$ and $F^{R}_{\bullet}:=F^{(1)}_{\bullet}
\otimes_{R}\cdots\otimes_{R}F^{(s)}_{\bullet}$. 

Following \cite[V-11]{Ser}, notice that $F^{k}_{\bullet}$ is a minimal free $S$-resolution of $P$, the Koszul complex $K_{\bullet}(f;S)$ is a minimal free $S$-resolution of $R=S/\idd_{S}$,
and there are isomorphisms 
$$
\Tor_{i}^{R}(M_{1},\ldots ,M_{s})\simeq H_{i}(F^{R}_{\bullet})
     \simeq H_{i}(F^{k}_{\bullet}\otimes_{S}R)
     \simeq \Tor_{i}^{S}(P,R)
     \simeq H_{i}(K_{\bullet}(f;P)),
$$
so that (1)(a) follows by \cite[1.6.31]{BH} as $(S,\im_{S})$ is local Noetherian and 
$\idd_{S}\subset \im_{S}$. 

Item (1)(b) follows from \cite[1.6.18]{BH}, since setting $f':=(X_{i,j}-X_{i,(j+1)}\ |\ 1\leq i\leq n,\
1\leq j<t )$, $H_{i}(K_{\bullet}(f;P))=0$ implies $H_{i}(K_{\bullet}(f';P))=0$. But 
$$
H_{i}(K_{\bullet}(f';P))=\Tor_{i}^{R}(M_{1},\ldots ,M_{t})\hat\otimes_{k}M_{t+1}\hat\otimes_{k}\cdots \hat\otimes_{k}M_{s},
$$
which is not zero, unless $\Tor_{i}^{R}(M_{1},\ldots ,M_{t})=0$, since $M_{t+1},\ldots ,M_s$ are non zero. One can also prove (1)(b) by analyzing the corner of the spectral sequence in Proposition \ref{A.4}.

For (2) let $T^{\bullet}$ be the total complex associated to the 
double complex $\Hom_S(K_{\bullet}(f;P), I^\bullet)$, where $I^\bullet$ is an injective resolution of the canonical module $\omega_S$ of $S$, and $T^{i}:=\bigoplus_{p+q=i}
\Hom_S(K_{p}(f;P),I^q)$ and choose the upper index as row index. We will
estimate in two ways the number $\t :=\max \{ i\ |\ H^{i}(T^{\bullet})\not= 0\}$. The homology of $T^{\bullet}$ is the abutment of the two spectral sequences associated
to the horizontal and vertical filtrations of the double complex.


For the vertical filtration, one has $_{v}E^1_{p,q}=\Ext^q_S(K_p(f;P),\omega_S)
\simeq \Ext^q_S(P,\omega_S)^{{n(s-1)}\choose{p}}$.


Therefore  $_{v}E^1_{p,q}=0$
for $q>\pd_SP$ and for $p>n(s-1)$ which shows that $\t \geq \pd_SP+n(s-1)$. 
Also  ${_{v}E^\infty_{n(s-1),\pd_SP}}\simeq{}_{v}E^2_{n(s-1),\pd_SP}\simeq\ext^{\pd_SP}_{S}(P,\om_{S})/\idd_{S}\ext^{\pd_SP}_S(P,\om_{S})\neq0$ by Nakayama's
lemma. We deduce that $\t = \pd_SP+n(s-1)$.


The other spectral sequence has second terms  $_{h}E^2_{p,q}\simeq 
\Ext^q_S(\Tor^{R}_{p}(M_{1},\ldots ,M_{s}),\omega_S)$, and an easy computation 
gives  $ns-\varepsilon_{0}+j=\max\{ p+q\ |\ {}_{h}E^2_{p,q}
\not= 0\}$ so that $\t =\max\{ p+q\ |\ {}_{h}E^{\infty}_{p,q}
\not= 0\}=:ns-\varepsilon +j\leq ns-\varepsilon_{0}+j$ and it is clear from its definition 
that $ \varepsilon_{0}\geq 0$. Also if $\varepsilon_{0}=\depth \Tor^{R}_{j}
(M_{1},\ldots ,M_{s})$ 
we have ${_{h}E^{\infty}_{j, ns-\varepsilon_{0}}}\simeq{}_{h}E^2_{j, ns-\varepsilon_{0}}\neq0$ 
which implies that $\t \geq ns-\varepsilon_{0}+j$ and shows that $\varepsilon =\varepsilon_{0}$ in this case. We have noticed above that the projective dimension of $P$ (over $S$) is equal to the 
sum of the projective dimensions of the $M_{i}$'s (over $R$).
Therefore, $\pd_SP+n(s-1)=\t=ns-\varepsilon+j$ gives the desired formula. It remains to prove that $\varepsilon \leq d:=\dim  \Tor^{R}_{j}(M_{1},\ldots ,M_{s})$. 


According to \cite[Lemma 1.9]{Sch}, ${}_{h}E^{2}_{j, ns-d}=\ext^{ns-d}_{S}( \Tor^{R}_{j}(M_{1},\ldots ,M_{s}),\om_{S})$ is a module
of dimension $d$, and  ${}_{h}E^{\infty}_{j, ns-d}$ is isomorphic to a submodule of ${}_{h}E^{2}_{j, ns-d}$ that coincides with ${}_{h}E^{2}_{j, ns-d}$ in dimension $d-1$, because, for $\ell\geq 2$, ${}_{h}E^{\ell}_{j-\ell+1, ns-d+\ell}$ is isomorphic to a module 
supported in dimension at most $d-\ell$. \end{proof}

The next corollary extends \cite[V-20 Corollaire]{Ser}, for regular local rings containing a field. 

Recall that an intersection of equidimensional subschemes $V_i$ of an irreducible variety $V$ is proper if its codimension equals the sum of the codimension of the $V_i$'s. This extends to not necessarily cyclic finitely generated modules $M_i$ over a domain $R$, by requiring that $\sum_i \codim M_i=\codim M$, with $M$ the tensor product over $R$ of the $M_i$'s.

\begin{cor}\label{A.8}Assume that $R$ is a regular local ring containing a field and
$M_{1},\ldots ,M_{s}$ are non zero finitely generated $R$-modules. The following
are equivalent,

\begin{enumerate}[\rm(1)]
\item  $\Tor_{1}^{R}(M_{1},\ldots ,M_{s})=0$ and 
$M_{1}\otimes_{R}\cdots\otimes_{R}M_{s}$ is Cohen-Macaulay.

\item The codimension of $M_{1}\otimes_{R}\cdots\otimes_{R}M_{s}$ is 
the sum of the projective dimensions of the $M_{i}$'s.

\item The intersection of the $M_{i}$'s is proper
and every $M_{i}$ is Cohen-Macaulay.
\end{enumerate}
\end{cor}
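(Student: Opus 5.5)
The plan is to deduce everything from Theorem \ref{rigtor} and the reduction used in its proof. Write $M:=M_1\otimes_R\cdots\otimes_R M_s=\Tor_0^R(M_1,\ldots,M_s)$ and $n:=\dim R$. Throughout we use the standard facts $\codim M=n-\dim M$, $\pd M_i=n-\depth M_i$ (Auslander--Buchsbaum), and $\codim M_i\le\pd M_i$, with equality if and only if $M_i$ is Cohen--Macaulay. We also use the inequality
$$\codim M\le\sum_i\codim M_i,$$
which we obtain from Serre's dimension inequality applied on $S$. Indeed, $M=P\otimes_S S/\idd_S$ with $\idd_S$ generated by $n(s-1)$ elements, and $\dim P=\sum_i\dim M_i$ for the completed tensor product. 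Hence $\dim M\ge\sum_i\dim M_i-n(s-1)$, which is the inequality. Combining, $\codim M\le\sum\codim M_i\le\sum\pd M_i$.

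\emph{(2)$\Leftrightarrow$(3).} By the chain above, $\codim M=\sum\pd M_i$ holds exactly when both inequalities are equalities. The first equality is properness of the intersection, and the second forces every $M_i$ to be Cohen--Macaulay.

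\emph{(1)$\Rightarrow$(2).} If $\Tor_1^R=0$, then rigidity, Theorem \ref{rigtor}(1)(a), gives $j=0$ in Theorem \ref{rigtor}(2). Every $\depth\Tor_{j-i}$ with $i>0$ then involves a vanishing module, so $\varepsilon_0=\depth M$, and the equality case applies: $\sum\pd M_i=n-\depth M$. Since $M$ is Cohen--Macaulay, $n-\depth M=n-\dim M=\codim M$, which is (2).

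\emph{(2)$\Rightarrow$(1).} First we must get $\Tor_i=0$ for $i>0$. Here we use the Koszul description from the proof of Theorem \ref{rigtor}: $\Tor_i^R(M_1,\ldots,M_s)\simeq H_i(K_\bullet(f;P))$ with $f$ of length $n(s-1)$. Under (3) each $M_i$ is Cohen--Macaulay, so $P$ is Cohen--Macaulay over $S$ of dimension $\sum\dim M_i$. Properness gives
$$\dim P/fP=\dim M=\sum_i\dim M_i-n(s-1)=\dim P-n(s-1).$$
So $f$ is a system of parameters on its part, and hence a $P$-regular sequence because $P$ is Cohen--Macaulay. Therefore the Koszul homology vanishes in positive degrees, giving $\Tor_i^R=0$ for $i>0$, and $M=P/fP$ is Cohen--Macaulay as a quotient of a Cohen--Macaulay module by a regular sequence. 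Alternatively, once $j=0$ is known, Theorem \ref{rigtor}(2) gives $\sum\pd M_i=n-\depth M$, and combining this with (2) yields $\depth M=\dim M$.

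The main obstacle is justifying the facts about $P=M_1\hat\otimes_k\cdots\hat\otimes_k M_s$: namely that $\dim P=\sum\dim M_i$, and that $P$ is Cohen--Macaulay when every $M_i$ is. We would get these from Serre \cite{Ser} (V, completed tensor products), for instance by taking tensor products of systems of parameters and regular sequences. The reduction to the complete case works as in the proof of Theorem \ref{rigtor}, since dimension, depth, codimension and projective dimension are all preserved by completion.
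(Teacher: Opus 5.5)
Your proof is correct. It agrees with the paper on (1)$\Rightarrow$(2): rigidity gives $j=0$, and then the equality case $\varepsilon=\varepsilon_0=\depth M$ of Theorem \ref{rigtor}(2) yields $\sum_i\pd M_i=\dim R-\dim M$. For the remaining implications you take a genuinely different route.

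The paper runs everything else through the identity $\sum_i\pd M_i=\dim R+j-\varepsilon$.
\begin{itemize}
\item For (2)$\Rightarrow$(1) it obtains $\dim M+j=\varepsilon\le\dim M$, which forces $j=0$ and $\depth M=\varepsilon=\dim M$.
\item It then writes $j$ as the sum of three terms: $\sum_i(\dim M_i-\depth M_i)$, the Serre defect $(s-1)\dim R-\sum_i\dim M_i+\dim M\ge 0$, and $\varepsilon-\dim M$.
\item Both (2)$\Rightarrow$(3) and (3)$\Rightarrow$(1) are read off from this decomposition.
\end{itemize}
This relies essentially on the finer bound $\varepsilon\le\dim\Tor_j$, which comes from Schenzel's lemma, together with $\dim\Tor_j\le\dim M$.

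You argue differently in two places.
\begin{itemize}
\item You get (2)$\Leftrightarrow$(3) by pure bookkeeping from the chain $\codim M\le\sum_i\codim M_i\le\sum_i\pd M_i$, with no Tor modules involved.
\item You prove (3)$\Rightarrow$(1) by Serre's classical reduction to the diagonal. $P$ is Cohen--Macaulay, and properness says that the $n(s-1)$ diagonal generators of $\idd_S$ are part of a system of parameters of $P$. Hence they form a $P$-regular sequence, so the Koszul homology vanishes and $M=P/\idd_S P$ is Cohen--Macaulay. Your ``(2)$\Rightarrow$(1)'' paragraph really proves (3)$\Rightarrow$(1), which is fine since (2)$\Leftrightarrow$(3) is already established.
\end{itemize}

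What each approach buys:
\begin{itemize}
\item Your route avoids the delicate inequality $\varepsilon\le\dim\Tor_j$; only the equality case of Theorem \ref{rigtor}(2) is used, in (1)$\Rightarrow$(2).
\item It also gives the extra information that the diagonal sequence is $P$-regular.
\item The cost is two facts the paper never needs: $\dim P=\sum_i\dim M_i$, and $P$ is Cohen--Macaulay when every $M_i$ is. Both are standard; one can concatenate $M_i$-regular systems of parameters and use exactness of $\hat\otimes_k$ on finitely generated modules. In a write-up, state them with a precise reference to Serre rather than leaving them as an acknowledged obstacle.
\item The paper's argument is shorter once Theorem \ref{rigtor} is available.
\end{itemize}
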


 \begin{proof}  




Write $M:=M_1\otimes_R\cdots\otimes_RM_s$ and let $j, \varepsilon$, and $\varepsilon_0$ be as in Theorem \ref{rigtor}. Assume (1). In Theorem \ref{rigtor} one has $j=0$ and $\depth M=\varepsilon_0=\varepsilon$ which means that $M$ is Cohen-Macaulay, and $\sum_i\pd M_i=j+\dim R-\varepsilon=\dim R -\dim M=\codim M$, whence (2).

Observe that \cite[V-18 Théorème 3]{Ser} gives $\codim M_1+\cdots+\codim M_s\geq\codim M$ which is equivalent to
$$(s-1)\dim R-\sum_{i=1}^s\dim M_i+\dim M\geq0,$$
and by the Auslander-Buchsbaum formula, $j$ can be written as
\begin{equation}\tag{$\star$}\label{jformula}j=\sum_{i=1}^s(\dim M_i-\depth M_i)+((s-1)\dim R-\sum_{i-1}^s\dim M_i+\dim M)+(\varepsilon-\dim M).\end{equation}

If (2) holds, then $\dim R-\dim M=\codim M=\sum_i\pd M_i=j+\dim R-\varepsilon$, that is, $\dim M+j=\varepsilon$. Since $\varepsilon\leq\dim M$, we conclude that $j=0, \varepsilon=\dim M$ and $\depth M=\varepsilon_0=\varepsilon$ by Theorem \ref{rigtor} again. It proves (1). Furthermore, applying this to (\ref{jformula}) and noticing that each component therein is non-negative, one concludes (3).

The hypotheses in (3) mean that (\ref{jformula}) has the form $j=\varepsilon-\dim M$, which implies again that $j=0$ and $M$ is Cohen-Macaulay, hence (1).

\end{proof}

\section{Spectral sequences from homological multicomplexes}\label{multcomp}

\subsection{Setup}

In this section, we develop the homological counterpart of results in \cite{CHN} for cohomology. To do so, the same notation and terminology therein are adopted, while definitions and illustrations are avoided to prevent repetition; see also \cite{Ver} for a general theory on multicomplexes.

Let $C_{\underline{\bullet}}$ be a (homological commuting) $n$-multicomplex of $R$-modules with components $C_\bq$ verifying $C_\bq=0$ for $\bq\not\in \NN^n=\oplus_{i=1}^{n}\NN e_i$,  with $e_i$ being the i-th canonical basis element of $\mathbb Z^n$. For such a multicomplex, as in \cite{CHN}, we will consider several complexes attached to the faces of the polyhedral cone ${\mathbb R}_{\geq 0}^n$. We recall where they come from. For $i_1<\cdots <i_p$, let 

-- $\Face_{i_1,\ldots ,i_p}:=\NN e_{i_1}\oplus \cdots \oplus \NN e_{i_p}$, denote a $p$-dimensional face, 



-- $\intF_{i_1,\ldots ,i_p}:=\{ \bq\in \NN^n\ \vert \ q_i= 0\Leftrightarrow i\not\in \{ i_1,\ldots ,i_p\}\}$,  the interior of this $p$-dimensional face,


-- $\compF_{i_1,\ldots ,i_p} :=\NN^n\setminus \Face_{i_1,\ldots ,i_p} $,  the complement of this $p$-dimensional face.



These sets could as well be defined in terms of coordinates that are zero on these faces, namely $\Face_{i_1,\ldots ,i_p}^*:=\Face_{\{ 1,\ldots ,n\}\setminus \{ i_1,\ldots ,i_p\} }$ and similarly for the three other sets. Notice that $\Face_{1,\ldots ,n}=\Face^*_\emptyset =\NN^n$ and  $\Face_\emptyset =\Face^*_{1,\ldots ,n}=\underline 0$.

Write $C_{\bb}^{F_{i_1,\ldots ,i_p}}$ for the subcomplex of $C_{\bb}$ obtained by replacing $C_\bq$ by the zero module unless $\bq\in F_{i_1,\ldots ,i_p}$. Similarly, we define $C_{\bb}^{\compF_{i_1,\ldots ,i_p}}$ and notice that $C_\bb^{\compF_{i_1,\ldots,i_p}}=C_\bb/C_\bb^{\Face_{i_1,\ldots,i_p}}$ is a quotient of $C_\bb$. We denote by
$C_{\bb}^{\intF_{i_1,\ldots ,i_p}}$ the quotient of $C_{\bb}^{\Face_{i_1,\ldots ,i_p}}$ mapping to zero the modules for 
$\bq\not\in \intF_{i_1,\ldots ,i_p}$.

\subsection{The hypercube augmentation}\label{hypercubesection}

Let $T_\bb $ be the trivial hypercube commuting $n$-multicomplex on $C_{\underline 0}$: $T_{\underline q}=C_{\underline 0}$ if $\bq\in \{-1,0\} ^n$  and $0$ otherwise, and differentials are the identity if source and target are in degrees that belong to $\{-1,0\} ^n$ and $0$ else. If $C_\bb$ is a $n$-multicomplex, we define a map from $C_\bb$ to $T_\bb $ by 
$$
\psi_{i_1,\ldots ,i_p}\ :\ x\in C_{e_{i_1}+\cdots +e_{i_p}}\mapsto d_{e_{i_{1}},i_1}\circ \; d_{e_{i_{1}}+e_{i_{2}},i_{2}}\; \circ \;\cdots \;\circ \; d_{e_{i_1}+\cdots +e_{i_{p}},i_{p} }(x)\in T_{e_{i_1}+\cdots +e_{i_p}}(C_{\underline 0})=C_{\underline 0}
$$
and notice that it provides a commuting $(n+1)$-multicomplex $\sC_{ \bb, \bullet}$ with non zero components  sitting in degrees that belong to $\NN^n \times \{-1,0\}$, 
with $\sC_{\bb ,-1}=T_\bb$ and $\sC_{\bb ,0}=C_\bb$. In other words, $T_\bb $ is the $n$-multicomplex $(K_\bullet (1;R)\otimes \cdots \otimes K_\bullet (1;R))\otimes C_{\underline 0}$ ($n$ iterations) with $T_{\underline q}=(\bigotimes_i K_{q_i}(1;R))\otimes C_{\underline 0}$, which is $C_{\underline 0}$ for $\underline0\leq \bq\leq \underline{1}$ and zero else. The map $\psi$ arises from the maps:
$$
\xymatrix{
C_{\underline0}\ar^{1}[d]&C_{e_i}\ar^{d_{e_i,i}}[l]\ar^{d_{e_i,i}}[d]\\
C_{\underline0}=T_{\underline0}&T_{\underline1}=C_{\underline0}\ar^{1}[l]
}
$$
Denote by $\sC_\bullet$ and $C_\bullet$ the totalization of $\sC_{\bb,\bullet}$ and $C_\bb$, respectively. Note also that from the exact sequence of $(n+1)$-multicomplexes
$$0\to T_{\underline\bullet}\to\sC_{\bb, \bullet}\to C_{\underline\bullet}\to0$$
one gets $H_\bullet(\sC_\bullet)\simeq H_\bullet(C_\bullet)$, as $T_\bb$ has trivial homology (i.e., the homology of its totalization is trivial).

We finally define an augmented version ${^+{C}}_{\bb}^{\intF_{i_1,\ldots ,i_p}}$ of $C_{\bb}^{\intF_{i_1,\ldots ,i_p}}$ for $p>0$ by adding the module $C_{\underline 0}$ in homological degree $e_{i_1}+\cdots +e_{i_{p-1}}$ and adding the map
$$
\xymatrix{
C_{e_{i_1}+\cdots +e_{i_{p}}}\ar^(.45){d_{e_{i_{1}},i_1}\circ \; d_{e_{i_{1}}+e_{i_{2}},i_{2}}\; \circ \;\cdots \;\circ \; d_{e_{i_1}+\cdots +e_{i_{p}},i_{p} }}[rrrrrr]&&&&&&
C_{\underline 0}[-e_{i_1}-\cdots -e_{i_{p-1}}].\\}
$$
Notice that  $C_{\bb}^{\intF_{i_1,\ldots ,i_p}}$ starts in total homological degree $p$ with unique summand $C_{e_{i_1}+\cdots +e_{i_{p}}}$.






The totalizations of all these complexes obtained from $C_{\bb}$ will be denoted respectively by $C_{\bullet}^{\Face_{i_1,\ldots ,i_p} }$, 
$C_{\bullet}^{\intF_{i_1,\ldots ,i_p}}$ and ${^+{C}}_ {\bullet}^{\intF_{i_1,\ldots ,i_p}}$. These start respectively in homological degree 0 or higher,
$p$ or higher, and $p-1$ or higher. As the sets $\Face^*_{i_1,\ldots ,i_p}$, 
$\intF^*_{i_1,\ldots ,i_p}$ or $\compF^*_{i_1,\ldots ,i_p}$ are respectively equal to $\Face_{j_1,\ldots ,j_{n-p}}$, 
$\intF_{j_1,\ldots ,j_{n-p}}$ or $\compF_{j_1,\ldots ,j_{n-p}}$ for $ \{ j_1,\ldots ,j_{n-p}\}=\{ 1,\ldots ,n\}\setminus \{ i_1,\ldots ,i_p\} $, we may alternatively use this other notation.

Finally, we set 
${\intF}:= {\intF}_{1,\ldots ,n}$.


\subsection{Homological spectral sequences arising from multicomplexes}  

The next result is the homological version of \cite[Theorem 2.3]{CHN}. The existence of these spectral sequences will follow from natural filtrations on simple explicit constructions from $C_{\bb}$, or on $C_{\bb}$ itself. The rest of this section is devoted to detailing these constructions.

\begin{thm}\label{fourspec}
Let $C_{\underline{\bullet}}$ be a $n$-multicomplex satisfying $C_\bq=0$ for $\bq\not\in \NN^n=\oplus_{i=1}^{n}\NN e_i$. Then there exist four convergent spectral sequences as follows:

\begin{itemize}
\item[(1)] $E_{p,q}^1=\oplus_{i_1<\cdots <i_p}H_q(C_{\bullet}^{\Face^*_{i_1,\ldots ,i_p}})\Rightarrow H_{p+q}(C_{\bullet}^{\intF})$,

\item[(2)] $E_{p,q}^1=\oplus_{\substack{i_1<\cdots <i_p \\ p\not= n}}H_q(C_{\bullet}^{\Face^*_{i_1,\ldots ,i_p}})\Rightarrow H_{p+q}({^+{C}}_{\bullet}^{\intF})$,

\item[(3)] $E^1_{p,q}=\oplus_{i_1<\ldots<i_{p}}H_{p+q}(C_{\bullet}^{\intF_{i_1,\ldots ,i_p} })\Rightarrow H_{p+q}(C_{\bullet})$,

\item[(4)] $E^1_{p,q}=\oplus_{i_1<\ldots<i_{p}}H_{p+q}({^+{C}}_{\bullet}^{\intF_{i_1,\ldots ,i_p}})\Rightarrow H_{p+q}(C_{\bullet})$.
\end{itemize}

\end{thm}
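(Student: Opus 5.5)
We construct each spectral sequence as the spectral sequence of a finite filtration, either on an explicit complex built from $C_{\bb}$ or on $C_\bullet$ itself. This mirrors the cohomological argument of \cite[Theorem 2.3]{CHN}. All filtrations are finite, of length at most $n+1$, so convergence is automatic.

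For (3), we filter $C_\bullet$ by the number of nonzero coordinates. Let $\mathcal F_p C_\bullet$ be the totalization of the subcomplex supported on $\bigcup_{i_1<\cdots<i_p}\Face_{i_1,\ldots,i_p}$, i.e. on degrees $\bq$ with at most $p$ nonzero coordinates. This is a subcomplex because the differentials of a homological multicomplex only decrease coordinates, so they cannot create new nonzero coordinates. The graded piece $\mathcal F_p/\mathcal F_{p-1}$ is the direct sum over $i_1<\cdots<i_p$ of the interiors, namely $\oplus C_\bullet^{\intF_{i_1,\ldots,i_p}}$. The differential between distinct interiors of the same dimension vanishes in the quotient. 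The standard spectral sequence of a filtered complex then gives $E^1_{p,q}=H_{p+q}(\mathcal F_p/\mathcal F_{p-1})$ abutting to $H_{p+q}(C_\bullet)$, which is (3). For (4), we run the same filtration on the hypercube-augmented complex $\sC_\bullet$, whose homology equals $H_\bullet(C_\bullet)$ as noted above. On $\sC_{\bb,\bullet}$ we count nonzero coordinates among the first $n$ indices, with the $T$-part placed so that the augmenting copy of $C_{\underline 0}$ attached to $\intF_{i_1,\ldots,i_p}$ lands in the $p$-th graded piece. We check that the graded pieces are exactly $\oplus{^+C}_\bullet^{\intF_{i_1,\ldots,i_p}}$. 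This uses the fact that the maps $\psi_{i_1,\ldots,i_p}$ are precisely the augmentation maps, and that the identity maps of $T$ connecting different faces drop filtration.

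For (1) and (2), we build a resolution-type (Čech-like) double complex out of the face complexes. Let
\[
D_{p,\bullet}:=\bigoplus_{i_1<\cdots<i_p}C_\bullet^{\Face^*_{i_1,\ldots,i_p}}.
\]
The horizontal maps $D_{p,\bullet}\to D_{p-1,\bullet}$ are alternating sums of the natural quotient maps $C^{\Face^*_{I}}\to C^{\Face^*_{I\setminus\{i_k\}}}$. Since $\Face^*_I\subseteq \Face^*_{I\setminus\{i_k\}}$, these are in fact inclusions of subcomplexes, with signs $(-1)^k$. The key combinatorial claim is that for each fixed $\bq$, the row $D_{\bullet,\bq}$ is the simplicial cochain-type complex $\bigoplus_{I\subseteq Z(\bq)}C_\bq$, where $Z(\bq)$ is the set of zero coordinates of $\bq$. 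Up to the index shift that makes $\Face^*_\emptyset=\NN^n$, this is a Koszul complex on $|Z(\bq)|$ unit elements tensored with $C_\bq$. It is therefore exact unless $Z(\bq)=\emptyset$, i.e. $\bq\in\intF$, and in that case only the $p=0$ term survives, with homology $C_\bq$.

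Filtering the double complex in the other direction then identifies the total homology of $D$ with $H(C_\bullet^{\intF})$ after a shift. Filtering by $p$ gives the $E^1$ page $\oplus H_q(C_\bullet^{\Face^*_{i_1,\ldots,i_p}})$, which is (1). The main obstacle is the indexing conventions: the degree shift between $p$ and total degree, and the correct orientation of the maps. We will fix these by comparison with \cite{CHN}, checking first the case $n=1$, where (1) reads $0\to C^{\Face^*_1}\to C\to C^{\intF}\to 0$. For (2), we omit the $p=n$ column, which is $C^{\Face_\emptyset}=C_{\underline 0}$. Removing that single column amounts to augmenting the interior complex by $C_{\underline 0}$ via the composite $\psi$, so the abutment becomes $H({^+C}_\bullet^{\intF})$. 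We verify this identification by comparing the mapping cone of the column inclusion with the definition of ${^+C}^{\intF}$.
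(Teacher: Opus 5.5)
Your constructions for (1), (3) and (4) match the paper's. $D$ is exactly the sub-Koszul multicomplex $C^{\Bbbk}_{\bullet,\bb}\subseteq K_\bullet(1,\ldots,1;C_{\bb})$ of Proposition~\ref{mainspectral}; no shift occurs there, because the surviving row homology sits in column $p=0$. For (3) and (4) you use the filtrations by $X_p$, resp.\ $X_p\times\ZZ$, of Propositions~\ref{ss3} and~\ref{fromaugmentedinteriorspectral}. In (4) nothing needs to be ``placed'': $T_{e_I}$ automatically has $|I|$ nonzero coordinates.

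For (2) you take a genuinely different route.

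\textbf{The paper's route.} It keeps the column $p=n$ and adds a second layer $T^{\Bbbk}_{\bullet,\bb}$ built from the hypercube $T_{\bb}$. Proposition~\ref{mainspectral}(1) applied to $\sC_{\bb,\bullet}$ then identifies the abutment formally: the surviving row homology is $C^{\intF}_\bullet\to T^{\intF}_\bullet$, which is ${}^+C^{\intF}_\bullet$. The work is on the $E^1$ side: $T^{\Face^*_I}_\bullet$ is acyclic for $|I|<n$, and the column $p=n$ becomes $C_{\underline 0}\xrightarrow{1}C_{\underline 0}$.

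\textbf{Your route.} Deleting the column makes your $E^1$ page immediate but moves all the content into the abutment. Let $\pi$ be the projection onto the deleted column, giving $0\to\mathrm{Tot}(D_{<n})\to\mathrm{Tot}(D)\xrightarrow{\pi}C_{\underline 0}\to 0$ with $C_{\underline 0}$ in degree $n$. Since $\mathrm{Tot}(D)\simeq C^{\intF}_\bullet$ lives in degrees $\geq n$, you get $H_k(\mathrm{Tot}\,D_{<n})\simeq H_k(C^{\intF}_\bullet)$ for $k>n$ and an exact sequence
\[0\to H_n(\mathrm{Tot}\,D_{<n})\to H_n(C^{\intF}_\bullet)\xrightarrow{\pi_*}C_{\underline 0}\to H_{n-1}(\mathrm{Tot}\,D_{<n})\to 0.\]
The complex ${}^+C^{\intF}_\bullet$ gives the same sequence, with $\pi_*$ replaced by the map induced by $\psi_{1,\ldots,n}$.

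So what you actually have to prove is $\pi_*=\pm\psi_{1,\ldots,n}$, which is a zig-zag through all $n$ Koszul levels. Lift $x\in C_{\underline 1}$ to a cycle $(y_0,\ldots,y_n)$, where $y_k$ can be chosen with components $\pm$ the composite of the $d_{\cdot,i}$, $i\in I$, applied to $x$, for $|I|=k$. Injectivity of $D_{n,\underline 0}\to D_{n-1,\underline 0}$ and commutativity of the multicomplex then force $y_n=\pm d_{e_1,1}\circ\cdots\circ d_{\underline 1,n}(x)$. This is true, but it is the one real verification in your (2), and you only announce it.

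\textbf{Trade-off.} Your version dispenses with $T_{\bb}$ altogether. The paper's version avoids this transgression computation and reuses the same augmentation $\sC$ as in (4).
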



We start with a construction that will be used for the first two spectral sequences.

Let $K_\bullet(1,\ldots,1;\_)$ stand for the Koszul complex of the sequence $1,\ldots,1$ (n times) -- such a Koszul complex is exact, since $n\geq 1$, by \cite[1.6.5(c)]{BH}. Write $C_{p,\bb}^{\Bbbk}$ for the subcomplex of the $(n+1)$-multicomplex  $K_\bullet (1,\ldots ,1;C_{\bb})$, with components the subcomplexes 
$$
C_{p,\bb}^{\Bbbk}:=\bigoplus_{i_1,\ldots ,i_p}C_{\bb}^{F_{i_1,\ldots ,i_p}^*}e_{i_1}\wedge\cdots \wedge e_{i_p}\subseteq K_p({{\underbrace{1,\ldots ,1}_{n\ {\rm times}}}};C_{\bb})=\bigoplus_{i_1,\ldots ,i_p}C_{\bb}\; e_{i_1}\wedge\cdots \wedge e_{i_p}.
$$



\textbf{Illustration for the case of double complexes:}

\begin{center}
$
\begin{matrix}
{\xymatrix@R=0pt@C=0pt{
\vdots&\vdots&\vdots&\vdots&\vdots&\\
\bullet&\bullet&\bullet&\bullet&\bullet&\cdots\\
\bullet&\bullet&\bullet&\bullet&\bullet&\cdots\\
\bullet&\bullet&\bullet&\bullet&\bullet&\cdots\\
\bullet&\bullet&\bullet&\bullet&\bullet&\cdots\\
\bullet&\bullet&\bullet&\bullet&\bullet&\cdots\\}}\\
C_{0,\bullet\bullet}^{\Bbbk}=C_{\bullet\bullet}\\
\end{matrix}
$
\ \ \ 
$
\begin{matrix}
{\xymatrix@R=0pt@C=0pt{
\vdots&\vdots&\vdots&\vdots&\vdots&\\
\circ&\circ&\circ&\circ&\circ&\cdots\\
\circ&\circ&\circ&\circ&\circ&\cdots\\
\circ&\circ&\circ&\circ&\circ&\cdots\\
\circ&\circ&\circ&\circ&\circ&\cdots\\
\bullet&\bullet&\bullet&\bullet&\bullet&\cdots\\}}\\
C_{1,\bullet\bullet}^{\Bbbk}\ \ [e_2]\\
\end{matrix}
$
\ \ \ 
$
\begin{matrix}
{\xymatrix@R=0pt@C=0pt{
\vdots&\vdots&\vdots&\vdots&\vdots&\\
\bullet&\circ&\circ&\circ&\circ&\cdots\\
\bullet&\circ&\circ&\circ&\circ&\cdots\\
\bullet&\circ&\circ&\circ&\circ&\cdots\\
\bullet&\circ&\circ&\circ&\circ&\cdots\\
\bullet&\circ&\circ&\circ&\circ&\cdots\\}}\\
C_{1,\bullet\bullet}^{\Bbbk}\ \ [e_1]\\
\end{matrix}
$
\ \ \ 
$
\begin{matrix}
{\xymatrix@R=0pt@C=0pt{
\vdots&\vdots&\vdots&\vdots&\vdots&\\
\circ&\circ&\circ&\circ&\circ&\cdots\\
\circ&\circ&\circ&\circ&\circ&\cdots\\
\circ&\circ&\circ&\circ&\circ&\cdots\\
\circ&\circ&\circ&\circ&\circ&\cdots\\
\bullet&\circ&\circ&\circ&\circ&\cdots\\}}\\
C_{2,\bullet\bullet}^{\Bbbk}\ \ [e_1\wedge e_2]\\
\end{matrix}
$
\\ 
\end{center}
Directions of maps are: down for vertical maps and to the left for horizontal ones.

\begin{prop}\label{mainspectral} With notations as above, the following holds:
\begin{itemize}
\item [(1)] For any $\bq\in \NN^n$, $H_p (C_{\bullet ,\bq}^{\Bbbk})=0$ for $p\not=0$ 
and 
$
H_0 (C_{\bullet ,\bq}^{\Bbbk})
=C_{\bq}^{\intF}.
$

\item [(2)] Let $C^{\Bbbk}_\bullet$ be the totalization of $C_{\bullet,\bb}^{\Bbbk}$, then 
$
H_i (C_\bullet^{\Bbbk})\simeq H_{i}(C_{\bullet}^{\intF}), \forall i.
$

\item [(3)] There is a spectral sequence,
$$
E_{p,q}^1=\oplus_{i_1<\cdots <i_p}H_q(C_{\bullet}^{\Face^*_{i_1,\ldots ,i_p}})\Rightarrow H_{p+q}(C_{\bullet}^{\intF} ).
$$
\end{itemize}
\end{prop}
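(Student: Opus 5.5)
Fix $\bq\in\NN^n$ and let $Z(\bq):=\{i\mid q_i=0\}$. A subset $\{i_1,\ldots,i_p\}$ satisfies $\bq\in\Face^*_{i_1,\ldots,i_p}$ exactly when $\{i_1,\ldots,i_p\}\subseteq Z(\bq)$. Hence, in the fixed multidegree $\bq$, the complex $C^{\Bbbk}_{\bullet,\bq}$ is
$$\bigoplus_{\{i_1<\cdots<i_p\}\subseteq Z(\bq)} C_\bq\, e_{i_1}\wedge\cdots\wedge e_{i_p},$$
with the Koszul differential of $(1,\ldots,1)$. This is a subcomplex of $K_\bullet(1,\ldots,1;C_\bq)$, since the differential only deletes indices. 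It is therefore the Koszul complex $K_\bullet(1,\ldots,1;C_\bq)$ on $|Z(\bq)|$ copies of $1$.

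For (1), I would split into two cases.
\begin{itemize}
\item If $Z(\bq)\neq\emptyset$, i.e.\ $\bq\notin\intF$, this Koszul complex is exact in all degrees, including degree $0$, because the unit $1$ generates the unit ideal (\cite[1.6.5(c)]{BH}). So all homology vanishes, and $C^{\intF}_\bq=0$ as well.
\item If $Z(\bq)=\emptyset$, i.e.\ $\bq\in\intF$, the complex is just $C_\bq$ concentrated in degree $0$, so $H_0=C_\bq=C^{\intF}_\bq$.
\end{itemize}
I must also check that the $H_0$ identification is compatible with the multicomplex differentials. $H_0(C^{\Bbbk}_{\bullet,\bb})$ is the cokernel of $C^{\Bbbk}_{1,\bb}\to C^{\Bbbk}_{0,\bb}=C_\bb$, whose image is $\sum_i C_\bb^{\Face^*_i}$, i.e.\ everything supported off $\intF$. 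The cokernel is thus the quotient $C_\bb\to C_\bb^{\intF}$ as multicomplexes.

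For (2), I would view $C^{\Bbbk}_{\bullet,\bb}$ as a double complex, with the Koszul index $p$ as one direction and the total degree of $\bb$ as the other. I use the spectral sequence that first takes homology in the $p$-direction. By (1) its $E^1$ page is concentrated in $p=0$ and equals $C^{\intF}_\bullet$, so $E^2=H(C_\bullet^{\intF})$, the sequence degenerates, and $H_i(C^\Bbbk_\bullet)\simeq H_i(C^{\intF}_\bullet)$. Convergence is no issue: $0\le p\le n$ is bounded and all degrees lie in $\NN^n$, so each total degree involves finitely many components. Alternatively, a direct argument works: the augmentation $C^\Bbbk_\bullet\to C^{\intF}_\bullet$ is a morphism of complexes whose mapping cone is a bounded-in-$p$ double complex with exact columns, hence acyclic.

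For (3), I would use the other filtration, by $p$. Its $E^1$ term is $H_q(C^{\Bbbk}_{p,\bullet})=\oplus_{i_1<\cdots<i_p}H_q(C_\bullet^{\Face^*_{i_1,\ldots,i_p}})$, and it abuts to $H_{p+q}(C^\Bbbk_\bullet)$, which (2) identifies with $H_{p+q}(C^{\intF}_\bullet)$. Again the filtration is finite, so the sequence converges.

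The main obstacle is the sign and grading bookkeeping. I need the Koszul differential to commute (or anticommute after a sign twist) with the multicomplex differentials so that the total complex is well defined. This holds because the Koszul maps are inclusions, tensored with signs, and these commute with $d$ restricted to subcomplexes. I also need to check that $C_\bb^{\Face^*_{I}}$ is genuinely a subcomplex of $C_\bb$, which holds since the differentials decrease coordinates and so preserve $q_i=0$.
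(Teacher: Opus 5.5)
Your proposal is correct and follows essentially the same route as the paper. In each multidegree $\bq$ you identify $C^{\Bbbk}_{\bullet,\bq}$ with the Koszul complex of $|Z(\bq)|$ copies of $1$ on $C_\bq$; it is exact when some coordinate of $\bq$ vanishes and reduces to $C_\bq$ in degree $0$ otherwise. You then use the two spectral sequences of the double complex obtained by totalizing along $\NN^n$, one giving (2) and the other giving (3). Your extra checks are details the paper leaves implicit: that the $H_0$ identification is compatible with the multicomplex differentials, that $C_\bb^{\Face^*_I}$ is a subcomplex, and that the sequences converge because $p$ is bounded.
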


\begin{proof}
Item (1) is equivalent to the exactness of $C_{\bullet ,\bq}^{\Bbbk}$ for $\bq\not\in \intF$, because if $\bq\in \intF$ then $C_{p ,\bq}^{\Bbbk}=0$ unless $p=0$ and $C_{0 ,\bq}^{\Bbbk}=C_{\bq}$. Assume that $\bq$ has exactly $t\geq 1$ coordinates equal to zero and $\bq \in \Face^*_{j_1,\ldots ,j_t}$. Then $C_{\bullet ,\bq}^{\Bbbk}$ is the exact subcomplex 
$$
K_p({{\underbrace{1,\ldots ,1}_{t\ {\rm times}}}};C_{\bq})\subseteq K_p({{\underbrace{1,\ldots ,1}_{n\ {\rm times}}}};C_{\bq})
$$
that corresponds to summands indexed by $e_{i_1}\wedge\cdots \wedge e_{i_p}$ for $\{ i_1,\ldots ,i_p \}\subseteq \{ j_1,\ldots ,j_t \}$.

For (2), denote by $C_{\bullet ,\bullet}^{\Bbbk}$ the double complex obtained by totalizing along $\NN^n$ the complex $C_{\bullet,\bb}^{\Bbbk}$. By (1), for any $q\in \NN$, $H_p(C_{\bullet ,q}^{\Bbbk})=0$ for $p\not= 0$ and $H_0 (C_{\bullet ,q}^{\Bbbk})=C_{q}^{\intF}$; hence $C_\bullet^{\Bbbk}$ is quasi-isomorphic to $C_{\bullet}^{\intF}$ and the conclusion follows.

For (3)  recall that $C_{p,\bb}^{\Bbbk}=\bigoplus_{i_1,\ldots ,i_p}C_{\bb}^{\Face^*_{i_1,\ldots ,i_p}}e_{i_1}\wedge\cdots \wedge e_{i_p}$, hence the second spectral sequence for $C_{\bullet ,\bullet}^{\Bbbk}$ has first terms $H_{q}(C_{p,\bullet}^{\Bbbk})=\oplus_{i_1<\cdots <i_p}H_q(C_{\bullet}^{\Face^*_{i_1,\ldots ,i_p}})$ and abuts to the homology of $C_\bullet^{\Bbbk}$ that is in turn  isomorphic to the one of $C_{\bullet}^{\intF}$ by (2).
\end{proof}

The second spectral sequence in Theorem \ref{fourspec} is a variant of the spectral sequence in Proposition \ref{mainspectral}(3).

\begin{prop}\label{augmentedmainspectral}
With notations as in Proposition \ref{mainspectral}, there is a spectral sequence 
$$
E_{p,q}^1=\oplus_{i_1<\cdots <i_p}H_q(C_{\bullet}^{\Face^*_{i_1,\ldots ,i_p}})\Rightarrow H_{p+q}({^+{C}}_\bullet^\intF)
$$
with $p$ in the range $0\leq p<n$ (i.e. $E_{p,q}^1=0$ for any $p\geq n$ and $q$).
\end{prop}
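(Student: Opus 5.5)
Apply the same filtration argument as in Proposition \ref{mainspectral}, but with the Koszul-type multicomplex $C^{\Bbbk}_{\bullet,\bb}$ truncated in Koszul degree $n$. The last Koszul component is
$$C^{\Bbbk}_{n,\bb}=C_\bb^{\Face^*_{1,\ldots,n}}e_1\wedge\cdots\wedge e_n=C_{\underline 0}\,e_1\wedge\cdots\wedge e_n.$$
Let $D_{\bullet,\bb}$ be the quotient of $C^{\Bbbk}_{\bullet,\bb}$ by this component, i.e. set $D_{p,\bb}=C^{\Bbbk}_{p,\bb}$ for $p<n$ and $D_{n,\bb}=0$. This is a quotient complex, because the Koszul differential lowers $p$, so $C^{\Bbbk}_{n,\bb}$ is a subcomplex. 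The filtration by $p$ on the totalization $D_\bullet$ then gives a spectral sequence with
$$E^1_{p,q}=\oplus_{i_1<\cdots<i_p}H_q(C_\bullet^{\Face^*_{i_1,\ldots,i_p}})$$
for $0\le p<n$ and $E^1_{p,q}=0$ for $p\ge n$. It converges because the multicomplex is concentrated in $\NN^n$ and $p$ is bounded. It remains to show that $H_i(D_\bullet)\simeq H_i({^+C}^\intF_\bullet)$, with the indexing in the statement.

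To compute $H(D_\bullet)$, I would again use the column-wise homology in $\bq$, as in Proposition \ref{mainspectral}(1). For $\bq\neq\underline 0$ nothing changes, since $C^{\Bbbk}_{n,\bq}=0$: the complex $D_{\bullet,\bq}$ is exact for $\bq\notin\intF\cup\{\underline 0\}$ and equals $C_\bq$ in degree $0$ for $\bq\in\intF$. At $\bq=\underline 0$, the complex $C^{\Bbbk}_{\bullet,\underline 0}=K_\bullet(1,\ldots,1;C_{\underline 0})$ is exact. Removing its top term leaves a complex whose only homology is $C_{\underline 0}$ in degree $n-1$, namely the image of $K_n\to K_{n-1}$. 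So the column-wise $E^1$ page of $D$ is $C_\bq^\intF$ in Koszul degree $0$ together with one extra copy of $C_{\underline 0}$ in Koszul degree $n-1$ and multidegree $\underline 0$, i.e. total degree $n-1$. In ${^+C}^\intF$ the module $C_{\underline 0}$ is added at multidegree $e_1+\cdots+e_{n-1}$, which is also total degree $n-1$. The piece $C^\intF$ starts in total degree $n$ at $C_{\underline 1}$.

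The main obstacle is to check that the induced differential from the $\intF$ part to this extra $C_{\underline 0}$ is the composite
$$d_{e_1,1}\circ\cdots\circ d_{\underline 1,n}\colon C_{\underline 1}\to C_{\underline 0}$$
(up to sign), so that $D_\bullet$ is quasi-isomorphic to ${^+C}^\intF_\bullet$ and not merely to something with the same graded pieces. I would avoid tracking higher differentials by building an explicit comparison map instead. One approach is a zig-zag through the homotopy of the Koszul complex of $1$'s, which is contraction by $e_1^*$. Alternatively, apply a long exact sequence argument: use
$$0\to C^{\Bbbk}_{n,\bb}\to C^{\Bbbk}_{\bullet,\bb}\to D_{\bullet,\bb}\to 0,$$
together with Proposition \ref{mainspectral}(2), and compare it with
$$0\to C^\intF_\bullet\to {^+C}^\intF_\bullet\to C_{\underline 0}[n-1]\to 0,$$
checking that the connecting maps agree. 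Following an element $x\in C_{\underline 1}$ through the staircase of the Koszul multicomplex, lifting successively along $e_n,e_{n-1},\ldots$, yields exactly the composite of the $d_{\cdot,i}$'s landing in $C_{\underline 0}\,e_1\wedge\cdots\wedge e_n$. Once the connecting maps match, the five lemma gives the isomorphism $H_\bullet(D_\bullet)\simeq H_\bullet({^+C}^\intF_\bullet)$, and the spectral sequence above abuts to $H_{p+q}({^+C}^\intF_\bullet)$ as claimed.
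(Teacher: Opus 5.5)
Your plan (drop the top Koszul term, filter what remains by $p$, then identify its homology) is viable and differs from the paper's. The $p$-filtration of $D_\bullet$ and your column-wise computation of its homology are correct. The comparison step, however, is set up backwards. All differentials lower $p$, so $D_\bullet=\bigoplus_{p<n}C^{\Bbbk}_{p,\bb}$ is a \emph{subcomplex} of $C^{\Bbbk}_\bullet$, and $C^{\Bbbk}_{n,\bb}=C_{\underline 0}\,e_1\wedge\cdots\wedge e_n$ is a \emph{quotient}. Likewise, the copy of $C_{\underline 0}$ added in degree $n-1$ is a subcomplex of ${^+C}^\intF_\bullet$, with quotient $C^\intF_\bullet$. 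So neither of your short exact sequences is a sequence of complexes. Even formally, the connecting map of your second sequence would go $C_{\underline 0}\to H_{n-2}(C^\intF_\bullet)=0$, so the composite could never appear there. The correct sequences are
\[
0\to D_\bullet\to C^{\Bbbk}_\bullet\xrightarrow{\;g\;}C_{\underline 0}\to 0
\qquad\text{and}\qquad
0\to C_{\underline 0}\to {^+C}^\intF_\bullet\to C^\intF_\bullet\to 0,
\]
with $C_{\underline 0}$ in degree $n$, resp.\ $n-1$, and $g$ the projection onto the $e_1\wedge\cdots\wedge e_n$ component.

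Second, the five lemma needs a map between the middle terms, which you never construct. You do not actually need one. The augmentation $\pi\colon C^{\Bbbk}_\bullet\to C^\intF_\bullet$ is a quasi-isomorphism (Proposition \ref{mainspectral}), and $C^\intF_\bullet$ starts in degree $n$. The corrected long exact sequences therefore give:
\begin{itemize}
\item $H_i(D_\bullet)\simeq H_i(C^\intF_\bullet)\simeq H_i({^+C}^\intF_\bullet)$ for $i>n$;
\item both sides vanish for $i<n-1$;
\item in degrees $n$ and $n-1$, both sides are a kernel and a cokernel: of $g_*\circ\pi_*^{-1}$ on one side, and of the connecting map $H_n(C^\intF_\bullet)\to C_{\underline 0}$ on the other, which is induced by $d_{e_1,1}\circ\cdots\circ d_{e_1+\cdots+e_n,n}$.
\end{itemize}
Everything thus reduces to $g_*\circ\pi_*^{-1}=\pm$ this composite, i.e.\ to your staircase claim. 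It is true but must be written out. For $x\in C_{\underline 1}$, one can choose the lifted cycle with exactly one component $\pm d_J(x)\,e_J$ in each $C_{\underline 1-e_J}\,e_J$, where $J\subseteq\{1,\ldots,n\}$ and $d_J$ is the composite of the differentials in the directions $j\in J$. Each correction is forced because the top differential of the Koszul complex on $|J|$ ones is injective. Hence the top component is $\pm d_1\cdots d_n(x)$ with coefficient $\pm1$, not an integer multiple arising from several orderings.

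The paper avoids this chase entirely. It applies the Koszul construction to the hypercube augmentation $\sC_{\bb,\bullet}$ (the cone of $\psi\colon C_\bb\to T_\bb$) rather than to $C_\bb$. Proposition \ref{mainspectral}(2) applied to $\sC$ then identifies the total homology with that of its interior part. That interior part is ${^+C}^\intF_\bullet$, since $T^\intF$ is just $T_{\underline 1}=C_{\underline 0}$ and $\psi_{1,\ldots,n}$ is the composite; so the map you must identify is built in from the start. In the $p$-filtration, the $T$-part in degree $p<n$ is a sum of $(n-p)$-dimensional hypercubes, hence acyclic, and the $p=n$ piece is $C_{\underline 0}\xrightarrow{1}C_{\underline 0}$, which gives the stated $E^1$.

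The two constructions are directly related. $T^{\Bbbk}_{<n}$ is an acyclic subcomplex of $\sC^{\Bbbk}_{<n}$ with quotient your $D_\bullet$, and $\sC^{\Bbbk}_{<n}\subseteq\sC^{\Bbbk}$ has acyclic quotient $\sC^{\Bbbk}_n$. So $D_\bullet\leftarrow\sC^{\Bbbk}_{<n}\to\sC^{\Bbbk}\to{^+C}^\intF_\bullet$ is exactly the explicit zig-zag of quasi-isomorphisms you were looking for, with the first two maps respecting the $p$-filtrations. Your route is more self-contained and isolates the single higher differential $H_n(C^\intF_\bullet)\to C_{\underline 0}$ that carries the extra information. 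The paper's route gets the identification at chain level, with no sign or coefficient bookkeeping, and reuses the augmentation it needs anyway for Proposition \ref{fromaugmentedinteriorspectral}.
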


\begin{proof} Consider the complexes
$$
T_{p,\bb}^{\Bbbk}:=\bigoplus_{i_1,\ldots ,i_p}T_{\bb}^{\Face_{i_1,\ldots ,i_p}^*}e_{i_1}\wedge\cdots \wedge e_{i_p}\subseteq K_p({{\underbrace{1,\ldots ,1}_{n\ {\rm times}}}};T_{\bb})=\bigoplus_{i_1,\ldots ,i_p}T_{\bb}\; e_{i_1}\wedge\cdots \wedge e_{i_p},
$$
and $\sC^{\Bbbk}_{\bullet,\bb ,\bullet}$ with 
$$
\sC^{\Bbbk}_{\bullet,\bb ,0}=C_{\bullet,\bb}^{\Bbbk}\quad\quad \hbox{and}\quad\quad\sC^{\Bbbk}_{\bullet,\bb ,-1}=T_{\bullet,\bb}^{\Bbbk}.
$$
Proposition \ref{mainspectral}  shows that the total homology of $\sC^{\Bbbk}_{\bullet,\bb ,\bullet}$ is the one of ${^+{C}}_\bullet^\intF$, since $T_\bb^\intF$ has only non zero component $T_{\underline{\mathbf 1}}^\intF=C_{\underline0}$ and ${^+{C}}_\bullet^\intF$ is the mapping cone of $C_\bullet^\intF \rightarrow T_\bullet^\intF $, where the map from $C_{\underline{\mathbf 1}}^\intF$ to $T_{\underline{\mathbf 1}}^\intF =C_{\underline0}$ is $d_{e_{1},1}\circ \; d_{e_{1}+e_{2},2}\; \circ \;\cdots \;\circ \; d_{e_{1}+\cdots +e_{n},n }$, as described in Subsection \ref{hypercubesection}.

Now $T_{\bullet}^{\Face_{i_1,\ldots ,i_p}^*}$ has trivial homology unless $p=n$, showing that in this case,  
the totalization of $\sC^{\Bbbk}_{p,\bb ,\bullet}$, has the same homology as $C_{p,\bullet}^{\Bbbk}$. For $p=n$, $\sC^{\Bbbk}_{n,\bq ,\bullet}=0$ unless $\bq =0$
and the non zero part of  $\sC^{\Bbbk}_{n,\bb ,\bullet}$ is $\xymatrix{\sC^{\Bbbk}_{n,{\underline0} ,0}=C_{\underline0}\ar^{1}[r]&C_{\underline0}=\sC^{\Bbbk}_{n,{\underline0} ,-1}\\}$, with homology zero.
\end{proof}

We apply another construction to the main theorem's third and fourth spectral sequences. In the next propositions, we consider filtrations of a $n$-multicomplex and its hypercube augmentation.

\begin{prop}\label{ss3}Let $C_{\underline{\bullet}}$ be a $n$-multicomplex satisfying $C_\bq=0$ for $\bq\not\in \NN^n=\oplus_{i=1}^{n}\NN e_i$. Then there exists a convergent spectral sequence:
 $$E^1_{p,q}=\oplus_{i_1<\ldots<i_p}H_{p+q}(C_{\bullet}^{\intF_{i_1,\ldots ,i_p} })\Rightarrow H_{p+q}(C_{\bullet}).$$
\end{prop}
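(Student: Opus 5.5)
We filter the multicomplex $C_{\bb}$, and hence its totalization $C_\bullet$, by the number of nonzero coordinates of the multidegree. For $\bq\in\NN^n$ let $s(\bq):=\#\{i\ \vert\ q_i\neq 0\}$, and for each $p$ let
$$\mathcal F_pC_\bullet:=\bigoplus_{s(\bq)\leq p}C_\bq\subseteq C_\bullet .$$

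First we check that $\mathcal F_pC_\bullet$ is a subcomplex. Each partial differential $d_{\bq,i}$ maps $C_\bq$ to $C_{\bq-e_i}$, and lowering a coordinate can only turn a nonzero coordinate into zero. Hence $s(\bq-e_i)\leq s(\bq)$ and the differential preserves the filtration. The filtration is increasing, with $\mathcal F_{-1}=0$ and $\mathcal F_n C_\bullet=C_\bullet$. It is therefore finite in each total degree, so the associated spectral sequence converges to $H_\bullet(C_\bullet)$.

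Next we identify the graded pieces. The set $\{\bq\ \vert\ s(\bq)=p\}$ is the disjoint union of the interiors $\intF_{i_1,\ldots,i_p}$ over $i_1<\cdots<i_p$. Inside $\mathcal F_p/\mathcal F_{p-1}$, any differential component leaving one interior either lowers $s$, and then vanishes in the quotient, or stays in the same interior. It can never jump to another $p$-subset, since that would require a zero coordinate to become nonzero. So as complexes
$$\mathcal F_pC_\bullet/\mathcal F_{p-1}C_\bullet\simeq\bigoplus_{i_1<\cdots<i_p}C_\bullet^{\intF_{i_1,\ldots,i_p}}.$$
For this to hold, the quotient structure of $C_\bb^{\intF_{i_1,\ldots,i_p}}$ (as a quotient of $C_\bb^{\Face_{i_1,\ldots,i_p}}$) must agree with the induced differential on the graded piece. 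This holds because both kill exactly the components landing on the boundary of the face.

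With the standard indexing of the spectral sequence of a filtered complex, $E^0_{p,q}=(\mathcal F_p/\mathcal F_{p-1})_{p+q}$. Therefore
$$E^1_{p,q}=H_{p+q}(\mathcal F_p/\mathcal F_{p-1})=\bigoplus_{i_1<\cdots<i_p}H_{p+q}(C_\bullet^{\intF_{i_1,\ldots,i_p}}),$$
which abuts to $H_{p+q}(C_\bullet)$, as stated.

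The main point needing care is the graded-piece identification, in particular that the differentials between different interiors vanish modulo $\mathcal F_{p-1}$. The rest is routine. Convergence is automatic from boundedness, since $0\leq p\leq n$.
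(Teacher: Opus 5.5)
Your proposal is correct and follows the paper's proof: the paper uses the same filtration (its $X_p$ is the set of $\bq$ with at least $n-p$ zero coordinates, i.e.\ your condition $s(\bq)\leq p$) and the same identification $F^p/F^{p-1}\simeq\bigoplus_{i_1<\cdots<i_p}C_\bullet^{\intF_{i_1,\ldots,i_p}}$. Your argument that differentials cannot jump between distinct interiors of the same dimension spells out a step the paper leaves implicit.
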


\begin{proof}
For $p\geq0$, define
$$X_p=\{(q_1,\ldots,q_n)\in\mathbb N^n: \mbox{at least} \ n-p \ \mbox{of the} \ q_j\mbox{'s are zero}\}.$$
The family of subcomplexes $F^p_\bullet$ given by $F^p=\oplus_{\underline{q}\in X_p}C_{\underline{q}}$ is a limited ascending filtration of $F^n=C_\bullet$ and therefore it yields a spectral sequence that abuts to $H_{p+q}(C_\bullet)$. As for $p\geq1$, $F^p/F^{p-1}\simeq\bigoplus_{i_1<\cdots<i_p}C_\bullet^{\intF_{i_1,\ldots,i_p}}$,
$$E^1_{p,q}=\oplus_{i_1<\ldots<i_p}H_{p+q}(C_{\bullet}^{\intF_{i_1,\ldots ,i_p} }).$$
\end{proof}

We also control the total homology of $C_{\bb}$ in terms of hypercube augmentations:

\begin{prop}\label{fromaugmentedinteriorspectral}Let $C_{\underline{\bullet}}$ be a $n$-multicomplex satisfying $C_\bq=0$ for $\bq\not\in \NN^n=\oplus_{i=1}^{n}\NN e_i$. Then there exists a convergent spectral sequence:

$$E^1_{p,q}=\bigoplus_{i_1<\cdots<i_p}H_{p+q}(^+{C}_\bullet^{\intF_{i_1,\ldots,i_p}})\Rightarrow H_{p+q}(C_\bullet).$$
\end{prop}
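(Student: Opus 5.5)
The plan is to run the filtration argument of Proposition \ref{ss3} on the hypercube augmentation $\sC_{\bb,\bullet}$ instead of on $C_\bb$. In Subsection \ref{hypercubesection} we saw that $H_\bullet(\sC_\bullet)\simeq H_\bullet(C_\bullet)$, because the kernel $T_\bb$ of $\sC_{\bb,\bullet}\to C_\bb$ has trivial total homology. It therefore suffices to build a convergent spectral sequence that abuts to $H_{p+q}(\sC_\bullet)$ and whose $E^1$ terms are the homologies of the augmented interior complexes.

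\textbf{The filtration.} For $p\geq 0$ let $X_p$ be as in the proof of Proposition \ref{ss3}, the set of $\bq\in\NN^n$ with at least $n-p$ zero coordinates. Define $G^p\subseteq \sC_\bullet$ as the sum of the components $C_\bq$ for $\bq\in X_p$, together with the components $T_{\underline\epsilon}=C_{\underline 0}$ for those $\underline\epsilon\in\{-1,0\}^n$ having at least $n-p+1$ coordinates equal to $0$, i.e.\ at most $p-1$ coordinates equal to $-1$. For $p=0$ this means only $C_{\underline 0}$.

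We check that $G^p$ is a subcomplex. Differentials inside $C_\bb$ only decrease coordinates, so they preserve $X_p$. Differentials inside $T_\bb$ go from degree $\underline\epsilon$ to $\underline\epsilon-e_i$. In the indexing of the mapping cone of $\psi$, the component $T_{e_{i_1}+\cdots+e_{i_k}}$ sits in the shifted position and receives $C_{e_{i_1}+\cdots+e_{i_k}}$.

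The bookkeeping I would fix is this: the copy of $C_{\underline 0}$ that receives $C_{e_{i_1}+\cdots+e_{i_p}}$ in the cone is attributed to the face $\{i_1,\ldots,i_p\}$. The $T$-differentials then map the copy attached to a set $S$ to the copies attached to $S\setminus\{i\}$. These lie in lower filtration level, since $X_{p-1}\subseteq X_p$ and the filtration is increasing. The map $\psi$ sends $C_\bq$ with $\bq=e_{i_1}+\cdots+e_{i_p}\in X_p$ into the copy attached to the same face, which lies in $G^p$. Hence the $G^p$ form a limited ascending filtration with $G^n=\sC_\bullet$, and we obtain a convergent spectral sequence abutting to $H_{p+q}(\sC_\bullet)\simeq H_{p+q}(C_\bullet)$.

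\textbf{The graded pieces.} Next we identify $G^p/G^{p-1}$ for $p\geq 1$. Its $C$-part is $\bigoplus_{i_1<\cdots<i_p} C_\bullet^{\intF_{i_1,\ldots,i_p}}$, exactly as in Proposition \ref{ss3}. Its $T$-part consists of one copy of $C_{\underline 0}$ for each $p$-subset. The only surviving differential between the two parts is the component of $\psi$ from $C_{e_{i_1}+\cdots+e_{i_p}}$ to that copy, namely $d_{e_{i_1},i_1}\circ\cdots\circ d_{e_{i_1}+\cdots+e_{i_p},i_p}$. The $T$-differentials leaving the copy go to $(p-1)$-subsets and so vanish in the quotient. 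The copy sits in total degree $p-1$, matching the degree shift $-e_{i_1}-\cdots-e_{i_{p-1}}$ in the definition of ${^+C}^{\intF_{i_1,\ldots,i_p}}_\bb$. Thus
\[
G^p/G^{p-1}\simeq\bigoplus_{i_1<\cdots<i_p}{^+C}_\bullet^{\intF_{i_1,\ldots,i_p}},
\]
which gives $E^1_{p,q}=\bigoplus H_{p+q}({^+C}_\bullet^{\intF_{i_1,\ldots,i_p}})$.

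\textbf{The case $p=0$.} For $p=0$, $G^0$ is $C_{\underline0}$ together with the $T$-copy attached to the empty face, which is $T_{\underline 0}$, joined by the identity map. So $G^0$ is exact, consistent with $E^1_{0,q}=0$. There is no augmented complex indexed by the empty set, so this matches the statement.

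\textbf{Main obstacle.} The step I expect to be delicate is the sign and degree bookkeeping that attributes the hypercube vertex $\underline\epsilon\in\{-1,0\}^n$ to a face. One must check that, with the cone's degree shift, the vertex receiving $C_{e_{i_1}+\cdots+e_{i_p}}$ really sits in total degree $p-1$. One must also check that every $T$-differential strictly lowers the number of attached indices, so that it lowers the filtration. If the indexing turns out to be reversed (the complement convention $\Face^*$), I would first relabel vertices by complements and verify the subcomplex property again. Only after that would I invoke exactness of $T_\bullet$ to identify the abutment.
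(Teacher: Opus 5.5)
Your proposal is the paper's proof: filter the augmentation $\sC_\bullet$ by the number of nonzero coordinates and identify $G^p/G^{p-1}\simeq\bigoplus_{i_1<\cdots<i_p}{^+C}_\bullet^{\intF_{i_1,\ldots,i_p}}$ for $p\geq1$. Then note that the bottom piece is the exact complex $C_{\underline0}\xrightarrow{1}C_{\underline0}$, and conclude with $H_\bullet(\sC_\bullet)\simeq H_\bullet(C_\bullet)$.

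The one thing to repair is your explicit membership rule for the $T$-vertices ("at least $n-p+1$ zero coordinates"). It is off by one and contradicts both your bookkeeping paragraph and your $p=0$ paragraph. Taken literally it gives $G^0=C_{\underline0}$, which is not a subcomplex because $C_{\underline0}\to T_{\underline0}$ is the identity. The paper avoids this by placing the copy of $C_{\underline0}$ that receives $C_{e_{i_1}+\cdots+e_{i_p}}$ in multidegree $(e_{i_1}+\cdots+e_{i_p},-1)$ and taking $F^p=\bigoplus_{\bq'\in X_p\times\ZZ}\sC_{\bq'}$. This puts that copy in filtration level $p$, exactly as you intend.
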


\begin{proof}
For $p\geq0$, define
$$X_p=\{(q_1,\ldots,q_n)\in\mathbb N^n: \mbox{at least} \ n-p \ \mbox{of the} \ q_j\mbox{'s are zero}\}$$
and $X'_p=X_p\times\ZZ$. Note that
$X_p=X_{p-1}\cup(\cup_{i_1<\cdots<i_p}\intF_{i_1,\ldots,i_p})$. The family of subcomplexes $F^p_\bullet$ given by $F^p=\oplus_{\underline{q}'\in X'_p}\sC_{\underline{q}'}$ is a limited ascending filtration of $F^n_\bullet=\sC_\bullet$ and therefore it yields a spectral sequence that abuts to $H_{p+q}(\sC_\bullet)\simeq H_{p+q}(C_\bullet)$.

We compute the first page of this spectral sequence by noticing that
$$F^p/F^{p-1}\simeq\bigoplus_{i_1<\cdots<i_p}{}^+C_\bullet^{\intF_{i_1,\ldots,i_p}}$$
for $p\geq1$ and $F^0:0\to C_{\underline0}\xrightarrow{1}C_{\underline0}\to0$.
\end{proof}



\section{Applications to Tor modules}\label{applicationssection}
\medskip

In this section, we consider $I_1, \ldots, I_n$ ideals in a ring $R$.

\subsection{Complexes associated to sums and products of ideals}\label{complexessection} 
\hfill

Consider the Koszul complexes $K_\bullet=K_\bullet (1,\ldots ,1;R)$ and $K^\bullet=K^\bullet (1,\ldots ,1;R)$ (1 appearing $n$ times) and for $p>0$,
$$
\tilde\SSS^p :=\bigoplus_{i_1<\cdots <i_{p}}(I_{i_1}+\cdots +I_{i_{p}})\; e_{i_1}\wedge \cdots \wedge e_{i_p}
\subseteq \bigoplus_{i_1<\cdots <i_{p}} R\;  e_{i_1}\wedge \cdots \wedge e_{i_p}=K^p
$$
 and 
$$
\tilde\PPP_p :=\bigoplus_{i_1<\cdots <i_{p}}I_{i_1}\cdots I_{i_{p}}e_{i_1}\wedge \cdots \wedge e_{i_p}
\subseteq \bigoplus_{i_1<\cdots <i_{p}} R\; e_{i_1}\wedge \cdots \wedge e_{i_p}=K_p.
$$

For $\tilde\SSS^0$ and $\tilde\PPP_0 $ there are two possible options for each:

--- $\tilde\SSS^0 =I_1\cdots I_n$ or $\tilde\SSS^0 =I_1\cap \cdots \cap I_n$ and

--- $\tilde\PPP_0=R $ or $\tilde\PPP_0 = I_1+\cdots +I_n$.

\noindent We choose the first options and consider the complexes 

$$
\tilde\SSS^\bullet : \xymatrix{
0\ar[r]&\tilde\SSS^0\ar[r]&\tilde\SSS^{1}\ar[r]&\tilde\SSS^{2}\ar[r]&\cdots\ar[r]&\tilde\SSS^n \ar[r]&0\\}
$$
and
$$
\tilde\PPP_\bullet : \xymatrix{
0\ar[r]&\tilde\PPP_n\ar[r]&\cdots\ar[r]&\tilde\PPP_{2}\ar[r]&\tilde\PPP_{1}\ar[r]&\tilde\PPP_0 \ar[r]&0\\}
$$
and set $\SSS^p :=K^p/\tilde\SSS^p$ and  $\PPP_p :=K_p/\tilde\PPP_p$. Thus $\SSS^0=R/I_1\cdots I_n$ and $\PPP_0=0$ and for $p>0$, 
$$
\SSS^p =\bigoplus_{i_1<\cdots <i_{p}}R/(I_{i_1}+\cdots +I_{i_{p}})\; e_{i_1}\wedge \cdots \wedge e_{i_p}\quad\mbox{and}\quad\PPP_p =\bigoplus_{i_1<\cdots <i_{p}}R/I_{i_1}\cdots I_{i_{p}} \; e_{i_1}\wedge \cdots \wedge e_{i_p}.
$$ 
We then have the exact sequences of complexes
$$
\xymatrix{
0\ar[r]&\tilde\SSS^\bullet \ar[r]&
K^\bullet (1,\ldots ,1;R) \ar[r]&
\SSS^\bullet \ar[r]
&0}
$$
and
$$
\xymatrix{0\ar[r]&\tilde\PPP_\bullet \ar[r]&K_\bullet (1,\ldots ,1;R)\ar[r]&\PPP_\bullet\ar[r]&0\\}
$$
which show that $H^i(\SSS^\bullet )=H^{i+1}(\tilde\SSS^\bullet )$ and  $H_i(\PPP_\bullet )=H_{i-1}(\tilde\PPP_\bullet )$, for any $i$. In particular, $H_1 (\PPP_\bullet )=R/(I_1+\cdots +I_n )$.

\subsection{Mayer-Vietoris spectral sequences for Tor modules}

\subsubsection{The spectral sequence Proposition \ref{augmentedmainspectral}}

Let $F_\bullet^{j}$, with $F_0^{j}=R$, be a free resolution of $R/I_j$ for $1\leq j\leq n$; then $F_\bb :=F_\bullet^{1}\otimes\cdots\otimes F_\bullet^{n}$ is a $n$-multicomplex. Given an $R$-module $M$ and $1\leq i_1<\cdots<i_p\leq n$, we consider
$$\Tor^R_j(M,R/I_{i_1},\ldots, R/I_{i_p})=H_j(M\otimes F_\bullet^{{i_1}}\otimes\cdots\otimes F_\bullet^{{i_p}})$$
as in Corollary \ref{A.3}. In particular, $H_i(F_\bullet )=\Tor^R_i(R/I_{1},\ldots, R/I_{n})$, for any $i$.

\newcommand\HHH{\mathbb H}


 Notice further that $H_i ({^+F}_\bullet^\intF[-n+1])=\Tor_{i-1}^R (I_1,\ldots ,I_n)$, for $i\geq2$, while $H_1 ({^+F}_\bullet^\intF[-n+1])=\ker (I_1\otimes\cdots\otimes I_n \rightarrow I_1\cdots I_n )$ and $H_0 ({^+F}_\bullet^\intF[-n+1])=R/I_1\cdots I_n$.\medskip

\begin{rem}\label{extensiontomodulesrmk}
More generally, if $F_\bullet^{M_j}$ is a free resolution of a module $M_j$ for $1\leq j\leq n$, then $F_\bb :=F_\bullet^{M_1}\otimes\cdots\otimes F_\bullet^{M_n}$ is a $n$-multicomplex and the modules $H_i (F_\bullet )=\Tor_i^R (M_1,\ldots ,M_n)$ and $H_i (P_\bullet)$, with $P_\bullet :={^+F}_\bullet^\intF[n-1]$ are both independent of the chosen resolutions, up to isomorphism. To see this, notice that if $G_\bullet^{M_j}$ is a free resolution of a module $M_j$, then $G_\bullet^{M_j}$ is homotopy equivalent to $F_\bullet^{M_j}$ for every $j$, and $F_\bullet^{M_{i_1}}\otimes\cdots\otimes F_\bullet^{M_{i_p}}$ is homotopy equivalent to $G_\bullet^{M_{i_1}}\otimes\cdots\otimes G_\bullet^{M_{i_p}}$, with homotopies given by the tensor product of the given choice of homotopies \cite{CE}. It follows that the homotopy of multicomplexes \cite[2.5, p. 60]{Ver} given by these homotopies induces a canonical isomorphism (i.e. independent of the chosen homotopies) $H_i (F_\bullet )\simeq H_i (G_\bullet )$ and also isomorphisms on all other terms in the first page of the spectral sequence of Proposition \ref{augmentedmainspectral}: $H_q(F_{\bullet}^{\Face^*_{i_1,\ldots ,i_p}})\simeq H_q(G_{\bullet}^{\Face^*_{i_1,\ldots ,i_p}})$, which in turn proves that the abutments coincide, this is the homology of $P_\bullet$.

Set $C_i:=\coker (d_2^{M_i} )$. The exact sequences $\xymatrix{0\ar[r]&C_i\ar[r]&F_0^{M_i}\ar[r]&M_i\ar[r]&0\\}$ and the diagram where paths following solid arrows are exact
$$
\xymatrix@R=8pt{
P_2\ar[r]&P_1\ar[dr]\ar@{-->}[rr]&&F_0^{M_1}\otimes\cdots \otimes F_0^{M_n}\ar[r]&H_0 (P_\bullet )\ar[r]&0\\
&&C_1\otimes\cdots\otimes C_n\ar[ur]\ar[dr]&&&\\
&H_1(P_\bullet )\ar[ur]&&0&&\\
0\ar[ur]&&&&&\\
}
$$
implies that $H_i (P_\bullet)=\Tor_{i-1}^R (C_1,\cdots ,C_n)$ for $i\geq 2$, $H_1 (P_\bullet)=\ker (C_1\otimes\cdots\otimes C_n \rightarrow F_0^{M_1}\otimes\cdots \otimes F_0^{M_n} )$ and $H_0 (P_\bullet)=\coker (C_1\otimes\cdots\otimes C_n \rightarrow F_0^{M_1}\otimes\cdots \otimes F_0^{M_n} )$.\medskip

\end{rem}

For an $R$-module $M$, set 
$$\TT_{p,q}(M) :=\bigoplus_{i_1<\cdots <i_{p}}\Tor_q^R(M,R/I_{i_1},\ldots ,R/I_{i_{p}})\quad\mbox{and}\quad\HH_{p,q}(M)=\bigoplus_{i_1<\cdots<i_p}H_{q+1}({^+F}_\bullet^{\intF_{i_1,\ldots,i_p}} \otimes_R M [p-1]).$$
In particular, $\TT_{p,0}(M)\simeq M\otimes \SSS^p$ and $\HH_{p,-1}(M)\simeq M\otimes\PPP_p$, for $p>0$. Then Proposition \ref{augmentedmainspectral} provides our first Mayer-Vietoris-type spectral sequence 
\begin{equation}\label{spectralfromsumtoproduct}E^1_{n-p,q}=\TT_{p,q}(M)\Rightarrow \HH_{n,q-p}(M).\end{equation}
The first page of $E$ has the following shape (dotted arrows are differentials on the second page):
$$
\xymatrix@!C @C=0pt @!R @R=10pt{
&&&\TT_{1,2}(M) \ar[r]&\TT_{2,2}(M) \ar[r]&\TT_{3,2}(M) \ar[r]&\cdots\\
 \textcolor{red}{(M,n)}&& &\TT_{1,1}(M) \ar@{-->}[urr] \ar@{..}[ur]\ar[r]&\TT_{2,1}(M)\ar@{..}[ur] \ar[r]&\TT_{3,1}(M) \ar[r]&\cdots\\
& & &M\otimes \SSS^1\ar@{..}[ur] \ar@{-->}[urr] \ar[r]&M\otimes \SSS^2\ar@{..}[ur] \ar[r]&M\otimes \SSS^3 \ar[r]&\cdots \\
 \textcolor{blue}{\HH_{n,1}(M)}\ar@{..}[uuurrr]& \textcolor{blue}{\HH_{n,0}(M)}\ar@{..}[uurr]&  \textcolor{blue}{M/I_1\cdots I_nM}\ar@{..}[ur]&\textcolor{blue}{0}\ar@{..}[ur]&\textcolor{blue}{0}\ar@{..}[ur]&\cdots\\}
$$

In blue: total homology corresponding to the dotted diagonal.

We display this spectral sequence for $M=R$ and $n=2,5$.

\textcolor{red}{(R,2)}:
$$
E^1: \xymatrix@!C@C=0pt@!R@R=10pt{
&&&0&\Tor^R_3(R/I_1,R/I_2) \\
&&&0&\Tor^R_2(R/I_1,R/I_2) \\
  && &0\ar@{..}[ur]&\Tor^R_1(R/I_1,R/I_2)\\
&&&R/I_1\oplus R/I_2\ar@{..}[ur]\ar^{\psi}[r]&R/(I_1+I_2)\\
 \textcolor{blue}{\HH_{2,1}(R)}\ar@{..}[uurururr]& \textcolor{blue}{\HH_{2,0}(R)}\ar@{..}[uurr]& \textcolor{blue}{R/I_1 I_2}\ar@{..}[ur]&\textcolor{blue}{0}\ar@{..}[ur]&\textcolor{blue}{0}\\}
$$

$$
E^2: \xymatrix@!C@C=0pt@!R@R=10pt{
&&&0&\Tor^R_3(R/I_1,R/I_2) \\
&&&0&\Tor^R_2(R/I_1,R/I_2) \\
&&&0\ar@{..}[ur]&\Tor^R_1(R/I_1,R/I_2)\\
&&&R/(I_1\cap I_2)\ar@{..}[ur]&0\\
 \textcolor{blue}{\HH_{2,1}(R)}\ar@{..}[uuuurrrr]& \textcolor{blue}{\HH_{2,0}(R)}\ar@{..}[uurr]& \textcolor{blue}{R/I_1 I_2}\ar@{..}[ur]&\textcolor{blue}{0}\ar@{..}[ur]&\textcolor{blue}{0}\\}
$$
which gives, since $\ker (\psi )\simeq R/(I_1\cap I_2)$, an exact sequence
$$
\xymatrix@C=20pt@R=20pt{
0\ar[r]&\Tor^R_1(R/I_1,R/I_2)\ar[r]&R/I_1 I_2\ar[r]&R/(I_1\cap I_2)\ar[r]&0,\\}
$$
showing that $\Tor^R_1(R/I_1,R/I_2)=(I_1\cap I_2)/I_1 I_2$ and for $i\geq 2$,
$$
H_i (F_\bullet )=\Tor^R_{i}(R/I_1,R/I_2)=H_{i}({^+F}_\bullet^\intF )=\HH_{2,i-2}(R).
$$
Furthermore, by Remark \ref{extensiontomodulesrmk}, $\HH_{2,0}(R)\simeq \ker (I_1\otimes_R I_2\rightarrow I_1 I_2)$ and $\HH_{2,i}(R)\simeq \Tor^R_i (I_1,I_2)$ for $i>0$.\\

\textcolor{red}{(R,5)}: If $\TT_{p,q}=\bigoplus_{i_1<\cdots <i_{p}}\Tor_q^R(R/I_{i_1},\ldots ,R/I_{i_{p}})=0$ for $q>0$ and $p<5$, setting $\TT_q :=\TT_{5,q}=\Tor_{q}^R(R/I_1,\ldots , R/I_5)$ the first page is (underlined zeros are the ones given by the vanishing hypothesis on the modules $\TT_{p,q}$)
$$
\xymatrix@!C@C=10pt@!R@R=10pt{
&0&0&0&\underline{0}&\underline{0}&\underline{0}&\TT_{6}&0\\
&0&0&0&\underline{0}&\underline{0}&\underline{0}&\TT_{5}&0\\
&0&0&0&\underline{0}&\underline{0}&\underline{0}&\TT_{4}&0\\
&0&0&0&\underline{0}&\underline{0}&\underline{0}&\TT_{3}&0\\
&0&0&0&\underline{0}&\underline{0}&\underline{0}&\TT_{2}&0\\
&0&0&0&\underline{0}&\underline{0}&\underline{0}&\TT_{1}&0 \\
&0&0\ar[r]&\SSS^1\ar@{..}[uuuurrrr] \ar@{-->}[uuurrrr] \ar_{\psi_1}[r]&\SSS^2\ar@{..}[uuurrr] \ar_{\psi_2}[r] \ar@{-->}[uurrr]&\SSS^3\ar@{..}[uurr]\ar@{-->}[urr]\ar_{\psi_3}[r]&\SSS^4\ar_{\psi_4}[r]&\SSS^5\ar[r]&0\\
 \textcolor{blue}{\HH_{5,1}}\ar@{..}[uuuuuuurrrrrrr]& \textcolor{blue}{\HH_{5,0}}\ar@{..}[uuuuuurrrrrr]& \textcolor{blue}{\PPP_{5}}\ar@{..}[ur]&\textcolor{blue}{0}\ar@{..}[ur]&\textcolor{blue}{0}\ar@{..}[ur]&\textcolor{blue}{0}\ar@{..}[uurr]&\textcolor{blue}{0}&\textcolor{blue}{0}&\textcolor{blue}{0}\\}
$$
with $\HH_{5,0}=\ker (I_1\otimes\cdots\otimes I_5 \rightarrow I_1\cdots I_5)$ and $\HH_{5,i}=\Tor_{i}^R(I_1,\ldots , I_5)$ for $i>0$, according to Remark \ref{extensiontomodulesrmk}. Maps with dotted arrows correspond to the three unique possibly non zero maps present in the second, third and fourth pages of this spectral sequence.

Although the following definition applies to $R$-modules, we focus only on ideals.

\begin{defn}\label{torinddef}
The ideals $I_1,\ldots, I_n$ are 
\begin{enumerate}[\rm(1)]
    \item Tor-independent if $\Tor_i^R(R/I_1,\cdots,R/I_n)=0$ for all $i>0$.

    
    \item Strongly Tor-independent if any subset of $I_1,\ldots, I_n$ is Tor-independent. 
    
\end{enumerate}
\end{defn}

In other words,  $I_1,\ldots, I_n$ are Tor-independent if and only if the multicomplex $F_\bb=F_\bullet^{1}\otimes\cdots\otimes F_\bullet^{n}$ resolves $R/I_1+\cdots+I_n$.

\begin{rem}\label{equivalencevanishing}
The following are equivalent:
\begin{enumerate}[\rm(i)]
\item $I_1,\ldots, I_n$ are strongly Tor-independent.

\item $I_{j_1}$ and $I_{j_2}+\cdots+I_{j_p}$ are Tor-independent for any $j_1>\cdots > j_p$.
\end{enumerate}
This is easily seen by recursion on $p$.
\end{rem}


We will denote by $\SSS^\bullet_-$ the truncation at degree $1$ of the complex $\SSS^\bullet$; note that such a truncation is the horizontal line $E^1_{\bullet,0}$ of the spectral sequence (\ref{spectralfromsumtoproduct}).

For any $t\geq0$, we introduce the following condition:
$$V_t:\ \mbox{for any}\ 1<p<n\ \mbox{and}\ 0<q<p+t, \TT_{p,q}(R)=0.$$
Note that every strict subset of $I_1,\ldots, I_n$ is Tor-independent if, and only if, the condition $V_t$ is satisfied for all $t\geq0$.

\begin{thm}\label{conditionV}
If $V_0$ is satisfied,
then
\begin{enumerate}[\rm(1)]
\item $H^i(\SSS^\bullet)\simeq\Tor^R_{n-i-1}(R/I_1,\cdots, R/I_n)$ for all $i\geq2$.

\item There exists an exact sequence
$$\xymatrix{\Tor_{n-1}^R(R/I_1,\cdots, R/I_n)\ar[r] & \SSS^0\ar[r] & H^1(\SSS^\bullet_-)\ar[r] & \Tor_{n-2}^R(R/I_1,\cdots, R/I_n)\ar[r] & 0}$$
where the leftmost map is injective if $V_1$ is satisfied.
\item Given $s\geq0$, if $V_{s+1}$ is satisfied, then there exists a natural surjective map $$\Tor^R_{n+s}(R/I_1,\cdots, R/I_n)\to\HHH_{n,s}(R)$$ which is injective if further $V_{s+2}$ is satisfied. In particular, the natural map $$\Tor_n^R(R/I_1,\ldots, R/I_n)\to\ker(I_1\otimes\cdots\otimes I_n\to I_1\cdots I_n)$$ is surjective if $V_1$ is satisfied and an isomorphism if further $V_2$ is satisfied.
\end{enumerate}
\end{thm}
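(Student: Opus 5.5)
The plan is to read everything off the Mayer--Vietoris spectral sequence (\ref{spectralfromsumtoproduct}) with $M=R$,
$$E^1_{n-p,q}=\TT_{p,q}(R)\Rightarrow \HHH_{n,q-p}(R).$$
I will first locate the nonzero entries of $E^1$ under $V_0$ and then track the few differentials that can survive.

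\textbf{Bookkeeping.} As the pictures for $(R,2)$ and $(R,5)$ indicate, the term $\TT_{p,q}$ lies on the diagonal $k:=q-p$, which abuts to $\HHH_{n,k}(R)$. The $r$-th differential goes from $\TT_{p,q}$ to $\TT_{p+r,q+r-1}$, so it lowers $k$ by one. The abutment is known:
\begin{itemize}
\item $\HHH_{n,-1}(R)=\SSS^0=R/I_1\cdots I_n$;
\item $\HHH_{n,k}(R)=0$ for $k\le -2$;
\item $\HHH_{n,0}(R)=\ker(I_1\otimes\cdots\otimes I_n\to I_1\cdots I_n)$ by Remark \ref{extensiontomodulesrmk}.
\end{itemize}

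\textbf{Nonzero entries.} The column $p=1$ vanishes for $q>0$, since $\Tor_q^R(R/I_j)=0$. Together with $V_0$, the only possibly nonzero entries of $E^1$ are the following:
\begin{itemize}
\item the row $q=0$, i.e. $\SSS^\bullet_-$, whose $E^2$-terms are $H^p(\SSS^\bullet_-)$, equal to $H^p(\SSS^\bullet)$ for $p\ge 2$;
\item the column $p=n$, with entries $\TT_q:=\Tor_q^R(R/I_1,\ldots,R/I_n)$;
\item the middle region $1<p<n$, $q\ge p$, which meets only the diagonals $k\ge 0$.
\end{itemize}

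\textbf{Differentials.} A differential leaving the row $q=0$ at $\SSS^p$ on page $r$ lands in $\TT_{p+r,r-1}$. When $p+r<n$ this target lies in the middle region below the line $q=p$, hence is $0$. So the only possible differential from the row is $d^{n-p}\colon H^p(\SSS^\bullet_-)\to \TT_{n-p-1}$.

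A term $\TT_{n,q}$ of the last column receives differentials from $\TT_{n-r,q-r+1}$. For $r\ge 2$ the source is a middle term, and it is nonzero only if $q-r+1\ge n-r$, i.e. $q\ge n-1$. Last-column terms emit nothing.

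\textbf{Item (1).} Fix $p\ge 2$. For $q=n-p-1\le n-3$, the term $\TT_{n-p-1}$ is hit only by $H^p(\SSS^\bullet_-)$. On the diagonal $k=-p-1\le -3$ the abutment is zero, so $\TT_{n-p-1}$ must die at $E^\infty$; the only way is that it is the image of $H^p(\SSS^\bullet_-)$. On the diagonal $k=-p$ the abutment is also zero, so $H^p(\SSS^\bullet_-)$ must die as well; its only exit is the same map. Hence
$$H^p(\SSS^\bullet)\xrightarrow{\ \sim\ }\TT_{n-p-1}\quad (p\ge 2).$$

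\textbf{Item (2).} The same argument on the diagonal $k=-2$ shows that $H^1(\SSS^\bullet_-)\to \TT_{n-2}$ is surjective. On the diagonal $k=-1$ only two terms can survive:
\begin{itemize}
\item the kernel $K$ of the surjection $H^1(\SSS^\bullet_-)\to\TT_{n-2}$;
\item the quotient of $\TT_{n-1}$ by the images of the middle terms $\TT_{n-r,n-r}$.
\end{itemize}
Convergence gives a two-step filtration of $\HHH_{n,-1}(R)=\SSS^0$. In it, the column-$n$ piece is the subobject and $K$ the quotient. Splicing this filtration with the surjection onto $\TT_{n-2}$ yields the exact sequence of (2). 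The leftmost map is the edge map out of $\TT_{n-1}$. Under $V_1$ all $\TT_{n-r,n-r}$ vanish, because $q=p<p+1$. The incoming differentials are then zero and that map is injective.

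\textbf{Item (3).} Fix $s\ge 0$. The diagonal $k=s$ contains $\TT_{n+s}$ and the middle terms $\TT_{p,p+s}$; under $V_{s+1}$ all the latter vanish. So $\HHH_{n,s}(R)=E^\infty_{0,n+s}$, which is the quotient of $\TT_{n+s}$ by the images of the middle terms $\TT_{p,p+s+1}$ of the diagonal $k=s+1$. This gives the natural surjection (the edge map). Under $V_{s+2}$ those middle terms vanish too, and the surjection is an isomorphism. The special case $s=0$, combined with the identification of $\HHH_{n,0}(R)$ above, is the final assertion.

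The main obstacle I expect is getting the filtration conventions right. It must be checked that the column $p=n$ gives the subobject, and not the quotient, of $\HHH_{n,k}$. Otherwise the sequence in (2) would come out reversed, and the maps in (3) would be injections rather than surjections. I would settle this by comparing with the explicit case $(R,2)$ worked out above, where $\Tor_1^R(R/I_1,R/I_2)$ appears as a submodule of $R/I_1I_2$.
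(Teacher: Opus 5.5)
Your route is the paper's: everything is read off the spectral sequence (\ref{spectralfromsumtoproduct}) with $M=R$. Your bookkeeping is right: $d^r\colon \TT_{p,q}\to\TT_{p+r,q+r-1}$, the column $p=n$ is the bottom of the filtration, and $\HHH_{n,k}(R)=0$ for $k\le -2$. Item (3), and item (2) for $n\ge 3$, go through exactly as you write them, with more justification than the paper's picture gives.

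The failure is at the end of the row. You claim that the only differential leaving $\SSS^p$ is $d^{n-p}\colon H^p(\SSS^\bullet_-)\to\TT_{n-p-1}$, but this holds only when $n-p\ge 2$. For $p=n-1$ this differential is $d^1$, the row map $\SSS^{n-1}\to\SSS^n=\TT_{n,0}$. Its source is $\SSS^{n-1}$, not $H^{n-1}$, and its target is itself a row entry. For $n\ge 3$, vanishing of the abutment therefore gives $H^n(\SSS^\bullet)=0$. Moreover, nothing enters or leaves the spot of $H^{n-1}(\SSS^\bullet)$ after $E^2$, and its diagonal $k=1-n$ abuts to $0$, so also $H^{n-1}(\SSS^\bullet)=0$. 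You do not get $H^{n-1}(\SSS^\bullet)\simeq\Tor^R_0=R/(I_1+\cdots+I_n)$.

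This is a defect of the statement, not only of your argument. For $I_1=\cdots=I_n=0$ every $V_t$ holds and $\SSS^\bullet=K^\bullet(1,\ldots,1;R)$ is exact, yet $\Tor^R_0(R,\ldots,R)=R$. Your argument proves (1) exactly for $2\le i\le n-2$, and trivially for $i=n$.

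The same edge effect breaks (2) when $n=2$. There $\TT_{n,n-2}=\SSS^2$ is hit by $d^1$ from $\SSS^1$, not by a map out of $H^1(\SSS^\bullet_-)$, so your ``same argument on the diagonal $k=-2$'' does not apply. The diagonal $k=-1$ only yields $0\to\Tor^R_1(R/I_1,R/I_2)\to R/I_1I_2\to H^1(\SSS^\bullet_-)\to 0$, as in the paper's $(R,2)$ computation. The four-term sequence ending in $\Tor^R_0(R/I_1,R/I_2)$ cannot be exact unless $I_1+I_2=R$: take $I_1=I_2=0$ over a field and count lengths.

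The paper's proof reads (1) and (2) off the same picture and has the same blind spot. Compare Corollary \ref{torind}(2): exactness of $\SSS^\bullet$ is incompatible with (1) at $i=n-1$ unless $I_1+\cdots+I_n=R$.

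Your proof is complete once the statement is corrected as follows:
\begin{itemize}
\item restrict (1) to $2\le i\le n-2$, supplemented by $H^{n-1}(\SSS^\bullet)=H^n(\SSS^\bullet)=0$ for $n\ge 3$;
\item restrict (2) to $n\ge 3$.
\end{itemize}
State these ranges explicitly instead of ``fix $p\ge 2$''.
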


\begin{proof}
By setting $\TT_q:=\TT_{n,q}(R)$, $V_0$ is equivalent to the condition $\TT_{p,q}(R)=0$ for $0<q<p<n$. Thus the spectral sequence (\ref{spectralfromsumtoproduct}) has the following shape if condition $V_0$ holds:

$$\xymatrix@!C@C=10pt@!R@R=5pt{
\ast & \ast & \ast & \ast & \ast & \TT_{n-1}
\\
\ast & \ast & \ast & \ast & 0 & \TT_{n-2}
\\
\ast & \ast & \ast & 0 & 0 & \vdots
\\
\ast & \ast & 0 & 0 & 0 & \vdots
\\
\ast & 0 & 0 & 0 & 0 & \TT_{1}
\\
\SSS^1\ar[r]\ar@{-->}[rrrrruuuu]\ar@{..}[rrrrruuuuu] & \SSS^2\ar[r] & \cdots\ar[r] & \SSS^{n-2}\ar[r]\ar@{-->}[rru] &  \SSS^{n-1}\ar[r] & \SSS^n
}$$
This shows items (1) and (2) since the abutment is $0$ except on the diagonal corresponding to where the terms $\SSS^1$ and $\TT_{n-1}$ appear. Under the conditions $V_{s+1}$ in item (3), the spectral sequence (\ref{spectralfromsumtoproduct}) has the following shape:

$$
\xymatrix@!C @C=3.5pt @!R @R=3.5pt{
\ast&\ast&\ast&\ast&\ast&\ast&\ast&\circ\ar[r]&\TT_{n+s}\\
\ast&\ast&\ast&\ast&\ast&\ast&\circ\ar@{-->}[rru]&0&\vdots\\
\ast&\ast&\ast&\ast&\ast&\circ\ar@{..>}[rrruu]&0&0&\TT_{n}\\
\ast&\ast&\ast&\ast&\circ&0&0&0&\TT_{n-1}\\
\ast&\ast&\ast&\circ&0&0&0&0&\TT_{n-2}\\
\ast&\ast&\circ&0&0&0&0&0&\vdots\\
\ast&\circ&0&0&0&0&0&0&\TT_1 \\
\circ&0&0&0&\SSS^1\ar@{..}[uuuurrrr] \ar@{-->}[uuurrrr] \ar[r]&\SSS^2\ar@{..}[uuurrr] \ar[r] \ar@{-->}[uurrr]\ar[r]&\cdots\ar@{..}[uurr]\ar[r] \ar[r]&\SSS^{n-1}\ar[r]&\SSS^n\\
 \textcolor{blue}{\HH_{s}}\ar@{..}[uuuuuuuurrrrrrrr]& \textcolor{blue}{\cdots}&\textcolor{blue}{\HH_{0}}\ar@{..}[uuuuuurrrrrr]& \textcolor{blue}{\PPP_{n}}\ar@{..}[ur]&\textcolor{blue}{0}\ar@{..}[ur]&\textcolor{blue}{0}\ar@{..}[ur]&\textcolor{blue}{0}\ar@{..}[uurr]&\textcolor{blue}{0}&\textcolor{blue}{0}\\}
$$
that provides the natural surjection $\TT_{n+s}\to\HH_{s}:=\HH_{n,s}(R)$. If $V_{s+2}$ is further satisfied, the modules placed at circles $\circ$ 
are zero, showing that this map is indeed an isomorphism.

\end{proof}

\begin{cor}\label{homologyofSSS}
If any strict subset of $I_1,\ldots, I_n$ is Tor-independent, then
\begin{enumerate}[\rm(1)]
\item $H^i(\SSS^\bullet)\simeq\Tor^R_{n-i-1}(R/I_1,\cdots, R/I_n)$ for all $i\geq2$.

\item There exists an exact sequence
$$\xymatrix{0\ar[r] & \Tor_{n-1}^R(R/I_1,\cdots, R/I_n)\ar[r] & \SSS^0\ar[r] & H^1(\SSS^\bullet_-)\ar[r] & \Tor_{n-2}^R(R/I_1,\cdots, R/I_n)\ar[r] & 0.}$$

\item $\Tor^R_{n+i}(R/I_1,\cdots, R/I_n)\simeq\HHH_{n,i}(R)$ for all $i\geq0$. In particular, $$\Tor^R_n(R/I_1,\cdots, R/I_n)\simeq\ker(I_1\otimes\cdots\otimes I_n\to I_1\cdots I_n).$$
\end{enumerate}
\end{cor}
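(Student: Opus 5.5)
The corollary is a direct specialization of Theorem \ref{conditionV}. Its hypothesis, that every strict subset of $\{I_1,\ldots,I_n\}$ is Tor-independent, says $\Tor_q^R(R/I_{i_1},\ldots,R/I_{i_p})=0$ for all $q>0$ and all $p<n$. Hence $\TT_{p,q}(R)=0$ for $1\le p<n$ and $q>0$. As noted just before Theorem \ref{conditionV}, this is exactly the statement that $V_t$ holds for every $t\geq 0$. So the plan is to check the three items of Theorem \ref{conditionV} with all conditions $V_t$ available.

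\textbf{Item (1).} This is Theorem \ref{conditionV}(1) verbatim, since only $V_0$ is needed.

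\textbf{Item (2).} Theorem \ref{conditionV}(2) gives the exact sequence
$$\Tor_{n-1}^R(R/I_1,\ldots,R/I_n)\to \SSS^0\to H^1(\SSS^\bullet_-)\to \Tor_{n-2}^R(R/I_1,\ldots,R/I_n)\to 0.$$
It also says the leftmost map is injective once $V_1$ holds, which it does here. This yields the stated short exact form with $0$ on the left.

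\textbf{Item (3).} Fix $i\geq 0$ and apply Theorem \ref{conditionV}(3) with $s=i$. The hypotheses $V_{i+1}$ and $V_{i+2}$ both hold, so the natural map $\Tor^R_{n+i}(R/I_1,\ldots,R/I_n)\to\HHH_{n,i}(R)$ is surjective and injective, hence an isomorphism. The case $i=0$ gives the final claim. Here one uses $\HHH_{n,0}(R)=H_1({}^+F^{\intF}_\bullet[-n+1])\simeq\ker(I_1\otimes\cdots\otimes I_n\to I_1\cdots I_n)$, as recorded before Remark \ref{extensiontomodulesrmk}; this is also the "in particular" part of Theorem \ref{conditionV}(3).

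\textbf{Main obstacle.} There is essentially none. The only point needing care is confirming that "every strict subset is Tor-independent" yields $V_t$ for all $t$. The range in $V_t$ is $1<p<n$ with $q>0$, and this range is covered by the vanishing for strict subsets. Subsets with $p=1$ are not needed, and they vanish anyway since $\Tor_q^R(R/I_j)=0$ for $q>0$.
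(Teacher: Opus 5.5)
Your reduction is exactly the paper's. The corollary is recorded, with no separate proof, as Theorem~\ref{conditionV} with every $V_t$ in force, and your handling of (3) and of the injectivity in (2) is fine.

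The problem is that quoting Theorem~\ref{conditionV}(1) ``verbatim'' imports an indexing error. You would catch it by tracing the spectral sequence (\ref{spectralfromsumtoproduct}) yourself:
\begin{itemize}
\item Under the hypothesis, the only nonzero $E^1$ terms off row $0$ lie in column $0$.
\item So the only differential that can leave $E^2_{n-p,0}=H^p(\SSS^\bullet_-)$ is $d^{n-p}$, with target $E_{0,n-p-1}=\TT_{n,n-p-1}(R)$. This is a genuine higher differential only when $n-p\geq 2$.
\item For $p=n-1$ the would-be target is $E_{0,0}=\SSS^n$, which is already used up by $d^1$.
\item Hence $E^2_{1,0}=H^{n-1}(\SSS^\bullet)$ survives to $E^\infty$ in total degree $1$. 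There the abutment $H_1({}^+F^\intF_\bullet)$ vanishes for $n\geq 3$, since ${}^+F^\intF_\bullet$ starts in degree $n-1$.
\end{itemize}
So $H^{n-1}(\SSS^\bullet)=0$, not $\Tor^R_0(R/I_1,\ldots,R/I_n)=R/(I_1+\cdots+I_n)$.

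A concrete counterexample: take $n=3$ and $I_1=I_2=I_3=0$. The hypothesis holds and $\SSS^\bullet=K^\bullet(1,1,1;R)$ is exact, yet item (1) with $i=2$ predicts $H^2(\SSS^\bullet)\simeq R$. Thus (1) is valid only for $2\leq i\leq n-2$ (both sides vanish for $i\geq n$), together with $H^{n-1}(\SSS^\bullet)=0$ when $n\geq 3$.

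For the same reason (2) needs $n\geq 3$. When $n=2$ the map $H^1(\SSS^\bullet_-)\to\Tor^R_0$ would be $d^1$. The correct conclusion is then the short exact sequence $0\to\Tor^R_1(R/I_1,R/I_2)\to R/I_1I_2\to R/(I_1\cap I_2)\to 0$ obtained in the paper's $(R,2)$ example. The four-term sequence as displayed would force $R/(I_1+I_2)=0$, because $R/I_1I_2\to R/(I_1\cap I_2)$ is onto.

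These defects sit in the statement and in Theorem~\ref{conditionV}, not in your logic. Still, a correct proof has to restrict the ranges as above rather than cite the theorem verbatim. Within those ranges, and for (3) in full, your argument goes through unchanged.
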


\begin{cor}\label{torind}
Let $F_\bullet^{j}$, with $F_0^{j}=R$, be a free resolution of $R/I_j$ for $1\leq j\leq n$, then $F_\bb :=F_\bullet^{1}\otimes\cdots\otimes F_\bullet^{n}$ is a $n$-multicomplex such that 
\begin{enumerate}[\rm(1)]
    \item ${^+F}_\bullet^\intF$ resolves $R/I_1\cdots I_n$ if $I_j$ and $I_1\cdots I_{j-1}$ are Tor-independent for $1<j\leq n$.


 \item If $I_1,\ldots, I_n$ are strongly Tor-independent, then $I_j$ and $I_1\cdots I_{j-1}$ are Tor-independent for $1<j\leq n$.
Hence  $F_\bullet$ resolves $R/(I_1+\cdots +I_n)$,  ${^+F}_\bullet^\intF$ resolves $R/I_1\cdots I_n$ and the complex 
$$
\SSS^\bullet:\xymatrix{
0\ar[r]&\SSS^0\ar[r]&\SSS^{1}\ar[r]&\SSS^{2}\ar[r]&\cdots\ar[r]&\SSS^n \ar[r]&0\\}
$$
 is exact.
\end{enumerate}
\end{cor}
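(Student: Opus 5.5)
Item (1) will be proved by induction on $n$, using Corollary \ref{homologyofSSS}(3) for the pair of ideals $J:=I_1\cdots I_{n-1}$ and $I_n$. The key observation is that ${^+F}_\bullet^\intF$ for $n$ ideals is built from the tensor product of the pieces $F^j_{\geq 1}$. After the shift $[-n+1]$ recorded before Remark \ref{extensiontomodulesrmk}, this is the augmented complex $C_1\otimes\cdots\otimes C_n\to R$, where $C_j=\coker(d_2^j)\simeq I_j$ is resolved by $F^j_{\geq1}[-1]$. Its homology is $H_0=R/I_1\cdots I_n$, $H_1=\ker(I_1\otimes\cdots\otimes I_n\to I_1\cdots I_n)$, and $H_i=\Tor^R_{i-1}(I_1,\ldots,I_n)$ for $i\ge 2$. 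So ${^+F}_\bullet^\intF$ resolves $R/I_1\cdots I_n$ if and only if
\[\Tor^R_{i}(I_1,\ldots,I_n)=0 \ (i>0)\quad\text{and}\quad I_1\otimes\cdots\otimes I_n\simeq I_1\cdots I_n .\]
I will prove by induction on $j$ that these hold for $I_1,\ldots,I_j$.

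Assume $F^{(j-1)}:=F^1_{\geq1}\otimes\cdots\otimes F^{j-1}_{\geq1}$ (suitably shifted) is a free resolution of $I_1\otimes\cdots\otimes I_{j-1}\simeq J:=I_1\cdots I_{j-1}$. Then $F^{(j-1)}\otimes F^j_{\geq1}$ computes $\Tor^R_\bullet(J,I_j)$. Now apply the two-ideal case, which is Corollary \ref{homologyofSSS}(3) with $n=2$, where the strict-subset hypothesis is vacuous. Alternatively, use dimension shifting directly: $\Tor_i^R(J,I_j)\simeq\Tor_{i+2}^R(R/J,R/I_j)=0$ for $i>0$, and $\ker(J\otimes I_j\to JI_j)\simeq\Tor_2^R(R/J,R/I_j)=0$. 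This gives the Tor-vanishing and $J\otimes I_j\simeq JI_j$. Combined with $I_1\otimes\cdots\otimes I_{j-1}\simeq J$ and the flatness-free Künneth-type identification, which says that a tensor product of resolutions computes the multiple Tor (Corollary \ref{A.3}), we get that $\Tor^R_i(I_1,\ldots,I_j)=0$ for $i>0$ and $I_1\otimes\cdots\otimes I_j\simeq I_1\cdots I_j$. The case $j=1$ is trivial, and the induction closes.

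For (2), I first reduce via Remark \ref{equivalencevanishing}. Strong Tor-independence implies that $I_j$ is Tor-independent from any sum of the others, and I want to pass from sums to products. I will argue by induction on $n$, using the short exact sequence $0\to I_1\cdots I_{j-1}\to R\to R/I_1\cdots I_{j-1}\to0$. By the inductive hypothesis and part (1) applied to $I_1,\ldots,I_{j-1}$, the complex ${^+F}^\intF$ for these ideals resolves $R/I_1\cdots I_{j-1}$. Tensoring it with $F^j$ and running the spectral sequence of Proposition \ref{augmentedmainspectral}, with $M=R/I_j$ in the form (\ref{spectralfromsumtoproduct}), gives $E^1$ terms $\Tor_q^R(R/I_j,R/I_{i_1},\ldots,R/I_{i_p})$. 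These vanish for $q>0$ by strong Tor-independence, so the spectral sequence degenerates to the row $R/I_j\otimes\SSS^\bullet_-$ for $I_1,\ldots,I_{j-1}$. Its abutment is $\Tor^R_\bullet(R/I_j,R/I_1\cdots I_{j-1})$ in the appropriate degrees. So I need the exactness of $R/I_j\otimes\SSS^\bullet$ for $I_1,\ldots,I_{j-1}$, that is, of the sum-complex for the ideals $I_1+I_j,\ldots,I_{j-1}+I_j$ modulo $I_j$. This family is again strongly Tor-independent over $R/I_j$ (or, over $R$, via Remark \ref{equivalencevanishing}), so exactness follows by induction from Corollary \ref{homologyofSSS}(1),(2). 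This bookkeeping is the main obstacle: I must check that the induction hypothesis transported to the ideals $I_i+I_j$ is legitimate, and that $H^0,H^1$ are controlled, with $\Tor_{n-1},\Tor_{n-2}$ vanishing.

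Once $I_j$ and $I_1\cdots I_{j-1}$ are Tor-independent, part (1) gives that ${^+F}_\bullet^\intF$ resolves $R/I_1\cdots I_n$. The statement that $F_\bullet$ resolves $R/(I_1+\cdots+I_n)$ is just Tor-independence, by the remark after Definition \ref{torinddef}. Exactness of $\SSS^\bullet$ then follows from Corollary \ref{homologyofSSS}: item (1) gives $H^i=0$ for $i\ge2$. Item (2), with $\Tor_{n-1}=\Tor_{n-2}=0$ (for $n\ge3$; small $n$ is checked by hand), gives $\SSS^0\simeq H^1(\SSS^\bullet_-)$, which means exactness at degrees $0$ and $1$.
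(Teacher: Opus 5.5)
Your proposal is correct.

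\textbf{Part (1).} This is essentially the paper's argument. Both proofs induct by peeling off the pair $(I_1\cdots I_{j-1},I_j)$. The paper phrases this as ${^+F}_\bullet^\intF={^+}(F_\bullet^1\otimes{^+G}_\bullet^\intF)^\intF$ combined with the two-ideal spectral sequence. You use the homology description of ${^+F}_\bullet^\intF[-n+1]$ together with dimension shifting, $\Tor_i^R(J,I_j)\simeq\Tor_{i+2}^R(R/J,R/I_j)$ and $\ker(J\otimes I_j\to JI_j)\simeq\Tor_2^R(R/J,R/I_j)$. That is the same content done elementarily, and your ordering matches the stated hypothesis exactly.

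\textbf{Part (2).} Here you take a genuinely different route.
\begin{itemize}
\item The paper tensors the exact complex $\SSS^\bullet$ of $I_1,\ldots,I_{j-1}$ with a resolution of $R/I_j$. By Remark \ref{equivalencevanishing}, the terms $\SSS^p$ with $p\ge1$ are Tor-acyclic against $R/I_j$. So only the column $\Tor_q^R(R/I_j,R/I_1\cdots I_{j-1})$ lies off row $0$, its differentials land in zero terms, and the zero abutment kills it.
\item You instead run (\ref{spectralfromsumtoproduct}) for $I_1,\ldots,I_{j-1}$ with $M=R/I_j$. Its $E^1$ terms $\Tor_q^R(R/I_j,R/I_{i_1},\ldots,R/I_{i_p})$ vanish for $q>0$ directly from the definition of strong Tor-independence, with no passage to sums. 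Its abutment is $\Tor^R_\bullet(R/I_j,R/I_1\cdots I_{j-1})$ once ${^+F}_\bullet^\intF$ of the smaller family is a resolution. You get that from induction and (1), or it follows directly from Corollary \ref{homologyofSSS}(3).
\end{itemize}
The paper mentions this spectral sequence only as an alternative proof that the bottom row is exact. Your route avoids both Remark \ref{equivalencevanishing} and the exactness of $\SSS^\bullet$ for the smaller family, at the cost of needing the resolution property of ${^+F}_\bullet^\intF$.

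\textbf{The step you call the main obstacle is unnecessary.}
\begin{itemize}
\item After the collapse, $E^1$ is concentrated in row $0$, columns $0,\ldots,j-2$, so the total homology vanishes in degrees $\ge j-1$.
\item ${^+F}_\bullet^\intF$ of $I_1,\ldots,I_{j-1}$ begins with $R$ in degree $j-2$, so $\Tor_i^R(R/I_j,R/I_1\cdots I_{j-1})$ sits in total degree $j-2+i$.
\item Hence it vanishes for every $i\ge1$ by degree reasons alone.
\end{itemize}
Exactness of $R/I_j\otimes\SSS^\bullet$ is then a by-product, not an input. Your transfer to $\bar R=R/I_j$ is in fact valid: $F^i_\bullet\otimes_R\bar R$ resolves $R/(I_i+I_j)$ over $\bar R$, and the resulting multiple Tors over $\bar R$ are $\Tor^R_\bullet(R/I_{i_1},\ldots,R/I_{i_p},R/I_j)$. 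You can simply drop that step.

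\textbf{A small correction in your last paragraph.} The paper's proof has the same gloss. Read literally at $i=n-1$, Corollary \ref{homologyofSSS}(1) gives $H^{n-1}(\SSS^\bullet)\simeq\Tor_0^R\neq0$, not $0$. What actually holds is $H^{n-1}(\SSS^\bullet)=H^n(\SSS^\bullet)=0$ for any ideals. This is because $H^i(\SSS^\bullet)\simeq H^{i+1}(\tilde\SSS^\bullet)$ and $\tilde\SSS^{n-1}\to\tilde\SSS^n$ is onto. So the Tor-vanishing is only needed in the range $2\le i\le n-2$.
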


\begin{proof}
For $n=2$, (1) and (2) are consequences of the spectral sequence $ \textcolor{red}{(R,2)}$.\\
Item (1) then follows by induction on $n\geq 2$, and taking into account that ${^+F}_\bullet^{\intF}={^+}(F_\bullet^{1}\otimes{^+G}_\bullet^{\intF})^{\intF}$ with $G_\bullet=F_\bullet^{2}\otimes\cdots\otimes F_\bullet^{n}$.

We now prove (2). It is trivial that $F_\bullet$ resolves $R/(I_1+\cdots+I_n)$. Corollary \ref{homologyofSSS} assures that ${^+F}_\bullet^{\intF}$ resolves $R/I_1\cdots I_n$ and that $\SSS^\bullet$ is exact.

It remains to prove that $\Tor_{i}^R(R/I_{j},R/I_1\cdots I_{j-1})=0$ for $i>0$. We induct on $j\geq 2$. 

Consider the complex $\SSS^\bullet$ for the ideals $I_1,\ldots, I_j$:
$$
\xymatrix{
0\ar[r]&R/I_1\cdots I_j\ar[r]&\SSS^{1}\ar[r]&\SSS^{2}\ar[r]&\cdots\ar[r]&\SSS^j \ar[r]&0.\\}
$$
The strongly Tor-independence assures that it is exact and the fact that $\Tor_i^R(R/I_{j},R/I_{j_1}+\cdots +I_{j_p})=0$ for $j\geq j_1>\cdots > j_p\geq 1$ and $i>0$ implies that $\Tor_i^R(R/I_{j},\SSS^p)=0$ for $j\geq p\geq 1$ and $i>0$. 
This provides a spectral sequence abutting to zero with $E^1$ page
$$
\xymatrix{
\Tor_{2}^R(R/I_{j},R/I_1\cdots I_{j-1})&0&\cdots&0&0\\
\Tor_{1}^R(R/I_{j},R/I_1\cdots I_{j-1})&0&\cdots&0&0\\
R/(I_1\cdots I_{j-1} +I_{j})\ar[r]&\SSS^{1}/I_{j}\SSS^{1}\ar[r]&\cdots\ar[r]&\SSS^{j-1}/I_{j}\SSS^{j-1}\ar[r]&\SSS^{j}\\}
$$
 It follows that $\Tor_{i}^R(R/I_{j},R/I_1\cdots I_{j-1})=0$ for $i>0$ and the bottom line is exact. The exactness of the bottom line also follows from the spectral sequence (\ref{spectralfromsumtoproduct}) for $ \textcolor{red}{(R/I_{j},j-1)}$.
 \end{proof}

\begin{cor}\label{spectraltorfromsumtoproduct}
If $I_1,\ldots, I_n$ are strongly Tor-independent, then for any $R$-module $M$ there exists a spectral sequence
$$E^1_{p,q}=\oplus_{i_1<\cdots<i_p}\Tor^R_q(M, R/I_{i_1}+\cdots+I_{i_p})\Rightarrow\Tor^R_{q-p+1}(M, R/I_1\cdots I_n).$$
\end{cor}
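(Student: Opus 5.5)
Under strong Tor-independence, Corollary \ref{torind}(2) says that the complex
$$0\to \SSS^0=R/I_1\cdots I_n\to \SSS^1\to\cdots\to\SSS^n\to 0$$
is exact. The plan is to apply $M\otimes_R -$, in its derived form, to this exact complex and read off a hypercohomology-type spectral sequence. I would fix a free resolution $L_\bullet$ of $M$ and form the double complex $D_{p,q}:=L_q\otimes_R \SSS^p$ for $1\le p\le n$. The homological index $p$ increases along the complex $\SSS^\bullet$, so it has to be regraded as cohomological; I would use the column index $p$ and take total degree $q-p$, with the differential of $\SSS^\bullet$ raising $p$.

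The first key step is the filtration by columns. Each $\SSS^p$ is a direct sum of the $R/(I_{i_1}+\cdots+I_{i_p})$ over $i_1<\cdots<i_p$. Taking vertical homology first therefore gives
$$E^1_{p,q}=H_q(L_\bullet\otimes\SSS^p)=\bigoplus_{i_1<\cdots<i_p}\Tor^R_q(M,R/I_{i_1}+\cdots+I_{i_p}),$$
which is exactly the stated first page. The double complex is bounded: there are only finitely many columns, $1\le p\le n$, and $L_\bullet$ lives in $q\ge 0$. Hence this column-filtration spectral sequence converges to the total homology of $D$.

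The second key step computes the total homology using the other filtration. Because each $L_q$ is free, tensoring with it is exact. The exact sequence $0\to \SSS^0\to\SSS^1\to\cdots\to\SSS^n\to 0$ then shows that each row $L_q\otimes\SSS^{\ge1}$ is exact except at $p=1$, where its cohomology is $L_q\otimes \SSS^0$. The row filtration thus degenerates, and $D$ is quasi-isomorphic to $L_\bullet\otimes R/I_1\cdots I_n$ placed in the column $p=1$. Its total homology in degree $q-p+1$ is therefore $\Tor^R_{q-p+1}(M,R/I_1\cdots I_n)$, and the shift by one comes from the column $p=1$.

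The main obstacle is bookkeeping with indices and signs. One has to fix the grading so that the abutment index comes out as $q-p+1$ rather than $q+p-1$, with the differentials going in the correct direction (so $d^r$ has bidegree $(r,r-1)$). One must also check that convergence holds for a mixed homological/cohomological bicomplex; it does because only finitely many columns are nonzero. Alternatively, as a consistency check, one can obtain the result directly from spectral sequence (\ref{spectralfromsumtoproduct}): by Corollary \ref{torind}(2), ${^+F}^\intF_\bullet$ resolves $R/I_1\cdots I_n$, so $\HH_{n,\bullet}(M)$ computes $\Tor^R(M,R/I_1\cdots I_n)$ up to the shift. Also $\TT_{p,q}(M)\simeq\bigoplus_{i_1<\cdots<i_p}\Tor_q^R(M,R/I_{i_1}+\cdots+I_{i_p})$ by Proposition \ref{A.4} together with Tor-independence of each subfamily. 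Reindexing then gives the same statement.
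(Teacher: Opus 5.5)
Your proposal is correct and is essentially the paper's proof: the paper also tensors the truncated complex $\SSS^\bullet_-$ with a resolution of $M$ and takes the two spectral sequences of the resulting double complex. As in your argument, one of them gives the stated $E^1$ page, and the exactness of $\SSS^\bullet$ from Corollary \ref{torind}(2) collapses the other onto $\Tor^R_{q-p+1}(M,R/I_1\cdots I_n)$.
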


\begin{proof}
Consider the two spectral sequences arising from the tensor product of $\SSS^\bullet_-$ with a resolution of $M$ and then apply Corollary \ref{torind}(2).
\end{proof}

\subsubsection{The spectral sequence Proposition \ref{fromaugmentedinteriorspectral}}

For an $R$-module $M$, Proposition \ref{fromaugmentedinteriorspectral} provides another Mayer-Vietoris-type spectral sequence 
\begin{equation}\label{spectralfromproducttosum}
E^1_{p,q}=\HH_{p,q}(M)\Rightarrow\TT_{n,p+q}(M).\end{equation}
The first page of $E$ has the following shape (dotted arrows are differentials on the second page):
$$
\xymatrix@C=20pt@R=20pt{
&\cdots\ar[r]&\HH_{3,1}(M) \ar[r]&\HH_{2,1}(M) \ar[r]&\HH_{1,1}(M)\\
\textcolor{red}{(M,n)} &\cdots\ar[r] &\HH_{3,0}(M) \ar@{-->}[urr]\ar[r]&\HH_{2,0}(M)\ar@{..}[ur] \ar[r]&\HH_{1,0}(M)\\
 &\cdots\ar[r] & M\otimes \PPP_3\ar@{..}[ur] \ar@{-->}[urr] \ar[r]&M\otimes \PPP_2\ar@{..}[ur] \ar[r]&M\otimes \PPP_1\\
\textcolor{blue}{\TT_{n,3}(M)}\ar@{..}[uuurrr]& \textcolor{blue}{\TT_{n,2}(M)}\ar@{..}[uuurrr]& \textcolor{blue}{\TT_{n,1}(M)}\ar@{..}[uurr]&  \textcolor{blue}{M\otimes\SSS^n}\ar@{..}[ur]}
$$

In blue: homology corresponding to the dotted diagonal.

As done previously in the spectral sequence (\ref{spectralfromsumtoproduct}), we display this one for $M=R$ and $n=2,4$.

\textcolor{red}{(R,2)}:
$$
E^1: \xymatrix@C=20pt@R=20pt{
&&&\HHH_{2,1}(R)&0\\
&&&\HHH_{2,0}(R)&0\\
&&&R/I_1 I_2\ar[r]&R/I_1\oplus R/I_2&\\\
 \ar@{..}[ururur]&\textcolor{blue}{\Tor_2^R(R/I_1,R/I_2)}\ar@{..}[uuurrr]& \textcolor{blue}{\Tor_1^R(R/I_1,R/I_2)}\ar@{..}[uurr]&  \textcolor{blue}{R/(I_1 +I_2)}\ar@{..}[ur]\\}
$$
which gives the exact sequence
$$\xymatrix@C=20pt@R=20pt{0\ar[r] & \Tor_1^R(R/I_1,R/I_2)\ar[r] & R/I_1I_2\ar[r] & R/I_1\oplus R/I_2\ar[r] & R/I_1+I_2\ar[r] & 0}$$
and $\Tor_i^R(R/I_1,R/I_2)=\HHH_{2,i-2}(R)=H_i({^+F}_\bullet^\intF)$ for all $i\geq2$.



\textcolor{red}{(R,4)}:
$$
E^1: \xymatrix@C=10pt@R=15pt{
&&&&\HHH_{4,1}(R)\ar[r]&\HHH_{3,1}(R)\ar[r]&\HHH_{2,1}(R)&0\\
&&&&\HHH_{4,0}(R)\ar@{-->}[urr]\ar[r]&\HHH_{3,0}(R)\ar[r]&\HHH_{2,0}(R)&0\\
&&&&R/I_1I_2I_3I_4\ar@{-->}[urr]\ar[r]&\oplus_{i<j<k}R/I_i I_j I_k\ar[r]&\oplus_{i<j} R/I_i I_j\ar[r]&\oplus_i R/I_i&\\
 &&&\textcolor{blue}{\TT_3}\ar@{..}[ururur]&\textcolor{blue}{\TT_2}\ar@{..}[uururr]& \textcolor{blue}{\TT_1}\ar@{..}[uurr]&  \textcolor{blue}{R/(I_1 +I_2+I_3+I_4)}\ar@{..}[ur]\\}
$$
where $\TT_\ell=\Tor_\ell^R(R/I_1,R/I_2,R/I_3,R/I_4)$.

If $\HH_{3,q}(R)=\HH_{2,q}(R)=0$ for all $q\geq0$,
$$
E^1: \xymatrix@C=10pt@R=15pt{
&&&&\HHH_{4,1}(R)&0&0&0\\
&&&&\HHH_{4,0}(R)&0&0&0\\
&&&&R/I_1I_2I_3I_4\ar[r]&\oplus_{i<j<k}R/I_i I_j I_k\ar[r]&\oplus_{i<j} R/I_i I_j\ar[r]&\oplus_i R/I_i&\\
 &&&\textcolor{blue}{\TT_3}\ar@{..}[ururur]&\textcolor{blue}{\TT_2}\ar@{..}[uururr]& \textcolor{blue}{\TT_1}\ar@{..}[uurr]&  \textcolor{blue}{R/(I_1 +I_2+I_3+I_4)}\ar@{..}[ur]\\}
$$
It provides a complex
$$\xymatrix@=1em{0\ar[r]&R/I_1I_2I_3I_4\ar[r]^-{\alpha}&\oplus_{i<j<k}R/I_i I_j I_k\ar[r]^\beta&\oplus_{i<j} R/I_i I_j\ar[r]^\gamma&\oplus_i R/I_i\ar[r]^-{\theta} & R/(I_1+I_2+I_3+I_4)\ar[r] & 0}$$
with homologies
$$\TT_3=\ker\alpha,\ \TT_2=\ker\beta/\ima\alpha,\ \TT_1=\ker\gamma/\ima\beta$$
and for $i\geq4$,
$$\TT_i=\HH_{4,i-4}(R)=H_i({^+F}^\intF_\bullet).$$

\begin{prop}\label{homologyofPPP}
If any strict subset of $I_1,\ldots, I_n$ is Tor-independent, then
\begin{enumerate}[\rm(1)]
\item $H_i(\PPP_\bullet)\simeq\Tor^R_{i-1}(R/I_1,\ldots, R/I_n)$ for all $i\leq n$.

\item $\Tor^R_{n+i}(R/I_1,\ldots, R/I_n)\simeq\HH_{n,i}(R)$ for all $i\geq0$.
\end{enumerate}

In particular, if $I_1,\ldots, I_n$ are strongly Tor-independent, then the complex 
$$
\xymatrix{
0\ar[r]&\PPP_{n}\ar[r]&\cdots\ar[r]&\PPP_2\ar[r]&\PPP_1\ar[r]&R/(I_1+\cdots+I_n)\ar[r] & 0\\}
$$
is exact.
\end{prop}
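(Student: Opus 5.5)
We work with the spectral sequence (\ref{spectralfromproducttosum}) for $M=R$,
$$E^1_{p,q}=\HH_{p,q}(R)\Rightarrow \TT_{n,p+q}(R)=\Tor^R_{p+q}(R/I_1,\ldots,R/I_n),$$
where $1\leq p\leq n$ and $q\geq -1$. The bottom row $q=-1$ is $\HH_{p,-1}(R)\simeq \PPP_p$, and its $d^1$ is the differential of $\PPP_\bullet$.

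The first step is to show that the hypothesis kills every row $q\geq 0$ except the column $p=n$. For a subset $\{i_1<\cdots<i_p\}$ with $p<n$, Corollary \ref{torind}(1) applies, because a strict subset is strongly Tor-independent and so, by Corollary \ref{torind}(2), $I_{i_j}$ and $I_{i_1}\cdots I_{i_{j-1}}$ are Tor-independent. Hence ${^+F}_\bullet^{\intF_{i_1,\ldots,i_p}}$ resolves $R/I_{i_1}\cdots I_{i_p}$. For $p=1$ we can instead check directly that ${^+F}^{\intF_{i}}$ is the resolution $F^{i}$, which is exact except at $R/I_i$. After the shift $[p-1]$ in the definition of $\HH_{p,q}$, this gives $\HH_{p,q}(R)=0$ for all $q\geq 0$ and $p<n$. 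We must track the shift carefully so that the surviving homology lands exactly in the row $q=-1$ as $R/I_{i_1}\cdots I_{i_p}$, in agreement with $\HH_{p,-1}(R)\simeq \PPP_p$.

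Once this is done, $E^1$ is supported on the row $q=-1$ together with the column $p=n$ in rows $q\geq 0$. The $E^2$ terms in the row $q=-1$ are $H_p(\PPP_\bullet)$ for $p\leq n$, where $p=n$ means $\ker(\PPP_n\to\PPP_{n-1})$. The column terms $\HH_{n,q}$, $q\geq0$, are untouched by $d^1$ since $E^1_{n+1,*}=0$. A higher differential $d^r$ with $r\geq 2$ goes from $(p,q)$ to $(p-r,q+r-1)$. Such a map leaving the column $p=n$ lands at $p<n$ with $q\geq 1$, which is zero. Such a map leaving the row $q=-1$ lands in row $q+r-1\geq 1$ at column $p-r<n$, also zero. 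Maps into the column $p=n$ would have to come from $p>n$, which is empty. So $E^2=E^\infty$.

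It remains to read off the abutment. In total degree $m=p+q$, the row contributes $H_p(\PPP_\bullet)$ with $m=p-1$, so $p\leq n$ means $m\leq n-1$. The column contributes $\HH_{n,q}$ with $m=n+q\geq n$. These ranges are disjoint, so each total degree has a single nonzero graded piece and there are no extension problems. This gives $H_i(\PPP_\bullet)\simeq\Tor_{i-1}^R$ for $i\leq n$, and $\Tor_{n+i}^R\simeq \HH_{n,i}(R)$ for $i\geq 0$. For $H_n$ we must use the kernel description at $p=n$; note that $\PPP_{n+1}=0$.

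For the final claim, strong Tor-independence gives $\Tor_{i-1}^R=0$ for $2\leq i\leq n$, so $\PPP_\bullet$ is exact in degrees $2,\ldots,n$. Together with $H_1(\PPP_\bullet)=R/(I_1+\cdots+I_n)$, noted in \S\ref{complexessection}, this is the exactness of the augmented complex. The main obstacle is the first step: the identification $\HH_{p,q}(R)=0$ for $q\geq0$, $p<n$, including the index conventions of the shift and of the augmentation.
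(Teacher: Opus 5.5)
Your proposal is correct and follows the paper's proof. The paper likewise uses Corollary \ref{torind} to kill $\HH_{p,q}(R)$ for $p<n$ and $q\geq 0$; this applies because strict subsets are strongly Tor-independent. It then reads both statements off the spectral sequence (\ref{spectralfromproducttosum}), which is concentrated on the row $q=-1$ and the column $p=n$.

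Your degeneration and no-extension bookkeeping only makes explicit the paper's ``follows by convergence''. There is one harmless slip: a $d^r$ leaving the row $q=-1$ lands in row $r-2\geq 0$, not $\geq 1$. That target is still zero, since it sits at column $p-r<n$.
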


\begin{proof}
Tor-independence of strict subsets of the $I_1,\ldots, I_n$ is equivalent to strongly Tor-independence of $I_{i_1},\ldots, I_{i_p}$ for any $1\leq i_1<\cdots<i_p\leq n$ with $p<n$. Hence, by Corollary \ref{torind} $\HH_{p,q}=0$ for $p\neq n$ and $q\neq-1$. The spectral sequence (\ref{spectralfromproducttosum}) then has the following shape

$$\xymatrix{
\HH_{n,1} & 0 & 0 & \cdots & 0 & 0 & 0
\\
\HH_{n,0} & 0 & 0 & \cdots & 0 & 0 & 0
\\
\PPP_n\ar[r] & \PPP_{n-1}\ar[r] & \PPP_{n-2}\ar[r] & \cdots\ar[r] & \PPP_3\ar[r] & \PPP_2\ar[r] & \PPP_1
}$$
The result follows by convergence.
\end{proof}

\begin{rem}\label{descriptionofmultitors}
Let $p<n$. If $I_{i_1},\ldots, I_{i_q}$ are Tor-independent for all $q\leq p$ and $i_1<\cdots<i_q$, then
$$\Tor_{i}^R(R/I_1,\ldots, R/I_n)\simeq H_{i+1}(\PPP_\bullet), \forall 1\leq i\leq p.$$
It follows along the same lines as in Proposition \ref{homologyofPPP} since $\HH_{q,i}(R)=0$ for every $q\leq p$ (Proposition \ref{homologyofPPP}(1) is the case $p=n-1$).
\end{rem}

The homologies of the complexes $\SSS^\bullet$ and $\PPP_\bullet$ are completely related under Tor-independence.

\begin{cor}\label{torind2}
If any strict subset of $I_1,\ldots, I_n$ is Tor-independent then
\begin{enumerate}[\rm(1)]
\item $H_i(\PPP_\bullet)\simeq H^{n-i}(\SSS^\bullet)$ for all $i\leq n-2$.

\item There exists an exact sequence
$$\xymatrix{0\ar[r] & H_n(\PPP_\bullet)\ar[r] & \SSS^0\ar[r] & H^1(\SSS^\bullet_-)\ar[r] & H_{n-1}(\PPP_\bullet)\ar[r] & 0}$$
where $\SSS^\bullet_-$ is $\SSS^\bullet$ truncated at degree $1$.
\end{enumerate}
\end{cor}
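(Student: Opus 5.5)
This corollary follows by comparing two earlier results, both of which assume that every strict subset of $I_1,\ldots,I_n$ is Tor-independent. The first is Corollary \ref{homologyofSSS}, which expresses the cohomology of $\SSS^\bullet$ (and of its truncation $\SSS^\bullet_-$) in terms of the multiple Tor modules $\Tor^R_\ell(R/I_1,\ldots,R/I_n)$. The second is Proposition \ref{homologyofPPP}(1), which does the same for the homology of $\PPP_\bullet$. Under our hypothesis both apply, so the plan is to eliminate the Tor modules between them.

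For (1), fix $i\leq n-2$ and set $k:=n-i\geq 2$. Corollary \ref{homologyofSSS}(1) gives
$$H^{k}(\SSS^\bullet)\simeq \Tor^R_{n-k-1}(R/I_1,\ldots,R/I_n)=\Tor^R_{i-1}(R/I_1,\ldots,R/I_n).$$
Proposition \ref{homologyofPPP}(1) gives $H_i(\PPP_\bullet)\simeq\Tor^R_{i-1}(R/I_1,\ldots,R/I_n)$, which is valid since $i\leq n$. Composing the two isomorphisms yields $H_i(\PPP_\bullet)\simeq H^{n-i}(\SSS^\bullet)$.

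Two boundary cases need checking.
\begin{itemize}
\item For $i\leq 0$, both $\Tor_{i-1}$ and $H_i(\PPP_\bullet)$ vanish. Indeed $\PPP_0=0$, so $H_0(\PPP_\bullet)=0$, and $\PPP_\bullet$ has no terms in negative degrees.
\item For $i=1$, we need $H^{n-1}(\SSS^\bullet)\simeq\Tor_0=R/(I_1+\cdots+I_n)$. This is consistent with $H_1(\PPP_\bullet)=R/(I_1+\cdots+I_n)$, which was noted in Subsection \ref{complexessection}.
\end{itemize}
In both cases we also need $n-i\geq 2$, which holds because $i\leq n-2$. The only real care here is matching the index conventions, since Corollary \ref{homologyofSSS}(1) is stated only for cohomological degree at least $2$.

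For (2), start from the exact sequence of Corollary \ref{homologyofSSS}(2):
$$0\to\Tor^R_{n-1}(R/I_1,\ldots,R/I_n)\to\SSS^0\to H^1(\SSS^\bullet_-)\to\Tor^R_{n-2}(R/I_1,\ldots,R/I_n)\to 0.$$
Proposition \ref{homologyofPPP}(1) with $i=n$ and $i=n-1$ gives $\Tor^R_{n-1}\simeq H_n(\PPP_\bullet)$ and $\Tor^R_{n-2}\simeq H_{n-1}(\PPP_\bullet)$. Substituting these isomorphisms for the outer terms gives the stated exact sequence.

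The main point to watch is that the maps remain well defined after substitution. Replacing the end terms of an exact sequence by isomorphic modules, and composing the maps with those isomorphisms, preserves exactness. So no naturality statement is needed, because the corollary asserts only the existence of such a sequence.
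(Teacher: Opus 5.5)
Your strategy is exactly the paper's: its proof is the single line ``apply Corollary \ref{homologyofSSS} and Proposition \ref{homologyofPPP}''. This does give (1) for $2\le i\le n-2$, and (2) for $n\ge 3$.

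The problem is your case $i=1$. You assert $H^{n-1}(\SSS^\bullet)\simeq \Tor_0^R(R/I_1,\ldots,R/I_n)=R/(I_1+\cdots+I_n)$ and ``check'' it against $H_1(\PPP_\bullet)$. That check is circular, since it uses the isomorphism you are trying to prove, and the claim is false. In fact $H^{n-1}(\SSS^\bullet)=0$ for every $n\ge 2$, with no hypothesis at all. Since $K^\bullet(1,\ldots,1;R)$ is exact, the sequence $0\to\tilde\SSS^\bullet\to K^\bullet\to\SSS^\bullet\to 0$ gives $H^{n-1}(\SSS^\bullet)\simeq H^n(\tilde\SSS^\bullet)=(I_1+\cdots+I_n)/\sum_j(I_1+\cdots+\widehat{I_j}+\cdots+I_n)=0$.

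This is also visible in the spectral sequence (\ref{spectralfromsumtoproduct}). The module $H^{n-1}(\SSS^\bullet)$ sits at $E^2_{1,0}$, and the only arrow from $\SSS^{n-1}$ towards $\Tor_0=\SSS^n$ is the $d^1$ already used to form $E^2$. So it survives to $E^\infty$ in total degree $1$, where the abutment $\HH_{n,1-n}(R)$ vanishes when $n\ge 3$. The identification $H^i(\SSS^\bullet)\simeq\Tor_{n-i-1}$ comes from a higher differential $d^{n-i}$, which requires $n-i\ge 2$. So Corollary \ref{homologyofSSS}(1), which you took at face value for all $i\ge 2$, is only valid for $2\le i\le n-2$, and trivially for $i=n$.

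Consequently statement (1) itself fails at $i=1$ whenever $n\ge 3$ and $I_1+\cdots+I_n\neq R$, so no argument can close this case. For example, take $R=k[x_1,x_2,x_3]$, $n=3$ and $I_j=(x_j)$. These ideals are strongly Tor-independent, and $H^2(\SSS^\bullet)=0$ (by the computation above, or by Corollary \ref{torind}(2)). On the other hand $H_1(\PPP_\bullet)=R/(x_1,x_2,x_3)=k\neq 0$. What your argument, and the paper's, actually proves for (1) is the range $i\le 0$ together with $2\le i\le n-2$.

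Similarly, (2) and its input Corollary \ref{homologyofSSS}(2) need $n\ge 3$. For $n=2$, the map $\SSS^0=R/I_1I_2\to H^1(\SSS^\bullet_-)=R/(I_1\cap I_2)$ is already surjective. The correct sequence is then $0\to\Tor_1^R(R/I_1,R/I_2)\to R/I_1I_2\to R/(I_1\cap I_2)\to 0$, with no room for $H_1(\PPP_\bullet)=R/(I_1+I_2)$ at the end.
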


\begin{proof}
Apply corollaries \ref{homologyofSSS} and \ref{homologyofPPP}.
\end{proof}

Similar to Corollary \ref{spectraltorfromsumtoproduct}, we have the following spectral sequence:

\begin{cor}\label{spectraltorfromproducttosum}
If $I_1,\ldots, I_n$ are strongly Tor-independent, then there exists a spectral sequence
$$E^1_{p,q}=\oplus_{i_1<\cdots<i_p}\Tor^R_q(M, R/I_{i_1}\cdots I_{i_p})\Rightarrow\Tor^R_{p+q-1}(M, R/I_1+\cdots+I_n).$$
\end{cor}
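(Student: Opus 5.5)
Following the proof of Corollary \ref{spectraltorfromsumtoproduct}, I would tensor the exact complex from Proposition \ref{homologyofPPP} with a free resolution of $M$ and read off the two spectral sequences of the resulting double complex. Under strong Tor-independence, Proposition \ref{homologyofPPP} gives exactness of
$$0\to\PPP_n\to\cdots\to\PPP_2\to\PPP_1\to R/(I_1+\cdots+I_n)\to0 .$$
Let $\PPP'_\bullet$ denote the truncation $0\to\PPP_n\to\cdots\to\PPP_1\to0$, with $\PPP_p$ placed in homological degree $p$. Its only homology is $H_1(\PPP'_\bullet)=R/(I_1+\cdots+I_n)$, which matches $H_1(\PPP_\bullet)=R/(I_1+\cdots+I_n)$ from Subsection \ref{complexessection}. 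Here $\PPP_p=\oplus_{i_1<\cdots<i_p}R/I_{i_1}\cdots I_{i_p}$.

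Next, let $L_\bullet$ be a free resolution of $M$ and form the double complex $D_{p,q}:=\PPP'_p\otimes_R L_q$. I would first filter by $p$ and take homology in the $q$ direction. This gives $E^1_{p,q}=H_q(\PPP_p\otimes L_\bullet)=\Tor^R_q(M,\PPP_p)=\oplus_{i_1<\cdots<i_p}\Tor^R_q(M,R/I_{i_1}\cdots I_{i_p})$, which is exactly the page claimed in the statement.

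For the abutment, I would use the other filtration. Since each $L_q$ is free, $H_p(\PPP'_\bullet\otimes L_q)=H_p(\PPP'_\bullet)\otimes L_q$. This vanishes for $p\neq1$ and equals $R/(I_1+\cdots+I_n)\otimes L_q$ for $p=1$. That spectral sequence therefore degenerates at the second page, with a single nonzero column $H_q(R/(I_1+\cdots+I_n)\otimes L_\bullet)=\Tor^R_q(M,R/(I_1+\cdots+I_n))$ in position $p=1$. Hence the total homology in degree $m$ is $\Tor^R_{m-1}(M,R/(I_1+\cdots+I_n))$. Both spectral sequences converge, because $\PPP'_\bullet$ is bounded ($0<p\le n$) and $L_\bullet$ is bounded below. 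Consequently the first spectral sequence abuts to $\Tor^R_{p+q-1}(M,R/(I_1+\cdots+I_n))$, as required.

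The main point needing care is the indexing shift and the truncation. The augmentation term $R/(I_1+\cdots+I_n)$ must be excluded from the double complex; otherwise the total complex is exact. The shift by $1$ comes precisely from $H_1(\PPP'_\bullet)$ sitting in degree $1$. No Tor-vanishing on $M$ is needed, because the argument uses only the freeness of $L_\bullet$.
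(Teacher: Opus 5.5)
Your proposal is correct and is essentially the paper's argument. The paper does the same thing (see the remark right after this corollary, which mirrors the proof of Corollary \ref{spectraltorfromsumtoproduct}): it takes the double complex $\PPP_\bullet\otimes_R F_\bullet$ for a resolution $F_\bullet$ of $M$, and notes that the second spectral sequence ${}'E^2_{p,q}=\Tor^R_q(M,H_p(\PPP_\bullet))$ degenerates under strong Tor-independence by Proposition \ref{homologyofPPP}. Your truncation $\PPP'_\bullet$ is in fact just $\PPP_\bullet$ itself, since the paper's convention already sets $\PPP_0=0$.
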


\begin{rem}
In general, for any ideals $I_1,\ldots, I_n$, choosing a flat resolution $F_\bullet$ of an $R$-module $M$, the double complex $\PPP_\bullet\otimes_RF_\bullet$ gives rise to two spectral sequences
$$E^1_{p,q}=\oplus_{i_1<\cdots<i_p}\Tor^R_q(M, R/I_{i_1}\cdots I_{i_p})\quad\mbox{and}\quad {}'E^2_{p,q}=\Tor_q^R(M,H_p(\PPP_\bullet))$$
abutting to two filtrations of the total homology. The spectral sequence ${}'E^2$ degenerates when the ideals are strongly Tor-independent, which is the case of Corollary \ref{spectraltorfromproducttosum}. Similarly, the double complex $\SSS^\bullet_-\otimes_RF_\bullet$ gives rise to two spectral sequences
$$E^1_{-p,q}=\oplus_{i_1<\cdots<i_p}\Tor^R_q(M, R/I_{i_1}+\cdots+I_{i_p})\quad\mbox{and}\quad {}'E^2_{-p,q}=\Tor_q^R(M,H^p(\SSS^\bullet_-))$$
abutting to two filtrations of the total homology. Again, $'E^2$ degenerates when the ideals are strongly Tor-independent, as in Corollary \ref{spectraltorfromsumtoproduct}.

\end{rem}


\subsection{Equivalences of exactness}

The spectral sequences (\ref{spectralfromsumtoproduct}) and (\ref{spectralfromproducttosum}) allow us to relate Tor-independence and exactness of the complexes $\PPP_\bullet$ and $\SSS^\bullet_-$ (items (3) and (4) below, respectively). Given $1\leq l_1<\cdots<l_t\leq n$, we denote by $\HH^{l_1,\ldots,l_t}_{p,q}(R)$ and $\TT^{l_1,\ldots,l_t}_{p,q}(R)$ the corresponding modules in the spectral sequences (\ref{spectralfromsumtoproduct}) and (\ref{spectralfromproducttosum}) when one considers only the subset $I_{l_1},\ldots, I_{l_t}$ of the $I_1,\ldots, I_n$.

\begin{prop}\label{equivalencesofexactness}
Consider the following properties:

\begin{enumerate}[\rm(1)]
\item $I_1,\ldots, I_n$ are strongly Tor-independent.

\item $\HH_{\bullet,q}^{l_1,\ldots,l_t}(R)$ is exact for any $1\leq l_1<\cdots<l_t\leq n$ and $q\geq0$.

\item $\HH_{\bullet,-1}^{l_1,\ldots,l_t}(R)$ is exact for any $1\leq l_1<\cdots<l_t\leq n$.

\item $\TT_{\bullet,0}^{l_1,\ldots,l_t}(R)$ is exact for any $1\leq l_1<\cdots<l_t\leq n$.
\end{enumerate}

Then $(1)\Leftrightarrow(2)+(3)\Leftrightarrow(2)+(4)$.
\end{prop}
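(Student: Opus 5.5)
The plan is to prove $(1)\Rightarrow(2),(3),(4)$ directly from results already established, and the two converses by induction on the size $t$ of the subset $\{I_{l_1},\ldots,I_{l_t}\}$, reading off the spectral sequences (\ref{spectralfromproducttosum}) and (\ref{spectralfromsumtoproduct}) for that subset. Throughout, "exact" for the row $\HH^{l_1,\ldots,l_t}_{\bullet,-1}(R)=\PPP_\bullet$ means exactness of the augmented complex $\PPP_t\to\cdots\to\PPP_1\to R/(I_{l_1}+\cdots+I_{l_t})\to0$. Likewise, "exact" for the row $\TT^{l_1,\ldots,l_t}_{\bullet,0}(R)=\SSS^\bullet_-$ means exactness of the augmented complex $\SSS^\bullet$ starting at $\SSS^0=R/I_{l_1}\cdots I_{l_t}$. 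This is how they appear in Proposition \ref{homologyofPPP} and Corollary \ref{torind}.

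\emph{Direct implications.} Assume (1). Every subset $I_{l_1},\ldots,I_{l_t}$ is again strongly Tor-independent. By Corollary \ref{torind}(2), ${^+F}^{\intF}_\bullet$ built on any subset resolves the corresponding product. Hence $\HH^{l_1,\ldots,l_t}_{p,q}(R)=0$ for all $p$ and all $q\geq 0$, so the rows in (2) are identically zero and in particular exact. Item (3) is the last assertion of Proposition \ref{homologyofPPP} applied to each subset. Item (4) is the exactness of $\SSS^\bullet$ in Corollary \ref{torind}(2), again applied to each subset.

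\emph{$(2)+(3)\Rightarrow(1)$.} Induct on $t$; the case $t=1$ is trivial. Suppose every strict subset of $\{I_{l_1},\ldots,I_{l_t}\}$ is Tor-independent, and consider (\ref{spectralfromproducttosum}) for this subset. Its rows $q\geq0$ are exact by (2), so they vanish on $E^2$. The row $q=-1$ is $\PPP_\bullet$, which by (3) has $E^2$ concentrated at $\PPP_1$, equal to $R/(I_{l_1}+\cdots+I_{l_t})$. Thus $E^2$ is concentrated in a single spot of total degree $0$. Convergence then gives $\TT_{t,i}(R)=0$ for $i>0$, which is Tor-independence of the subset.

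\emph{$(2)+(4)\Rightarrow(1)$.} Again induct on $t$, and use (\ref{spectralfromsumtoproduct}), with $E^1_{t-p,q}=\TT_{p,q}(R)$ abutting to $\HH_{t,q-p}(R)$. By induction $\TT_{p,q}=0$ for $p<t$, $q>0$, so condition $V_s$ holds for every $s$ and Theorem \ref{conditionV} applies fully.
\begin{itemize}
\item[(a)] By induction every strict subset is strongly Tor-independent, so Corollary \ref{torind} kills $\HH_{p,q}$ for $p<t$, $q\geq0$. Hence the row (2) for the full subset forces $\HH_{t,q}(R)=0$ for $q\geq0$. Theorem \ref{conditionV}(3) then gives $\TT_{t,t+s}=0$ for $s\geq0$.
\item[(b)] Row exactness (4) kills $H^i(\SSS^\bullet)$ for $i\geq2$, so Theorem \ref{conditionV}(1) gives $\TT_{t,i}=0$ for $i\leq t-3$.
\item[(c)] For the two remaining modules $\TT_{t,t-1}$ and $\TT_{t,t-2}$, use the exact sequence of Theorem \ref{conditionV}(2), with injective left map since $V_1$ holds:
$$0\to\TT_{t,t-1}\to\SSS^0\to H^1(\SSS^\bullet_-)\to\TT_{t,t-2}\to0.$$
\end{itemize}
Exactness of the augmented $\SSS^\bullet$ says precisely that the canonical map $\SSS^0\to H^1(\SSS^\bullet_-)=\ker(\SSS^1\to\SSS^2)$ is an isomorphism. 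If the middle map of the sequence above is this canonical map, both end terms vanish, completing the induction.

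The main obstacle is exactly this identification in (c). The map $\SSS^0\to H^1(\SSS^\bullet_-)$ in Theorem \ref{conditionV}(2) comes from the edge map of the spectral sequence, via the abutment $\HH_{t,-1}(R)=R/I_{l_1}\cdots I_{l_t}$. It must agree with the map induced by the differential $\SSS^0\to\SSS^1$, the diagonal given by the matrix $(1\;\cdots\;1)$, at least up to an automorphism. I would check this by tracing the construction of Proposition \ref{augmentedmainspectral}. The bottom corner of $\sC^{\Bbbk}$ in multidegree $\underline0$ is the Koszul complex on $1,\ldots,1$ tensored with $C_{\underline0}=R$, augmented by $T$. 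The edge map therefore sends the class of $1\in R/\prod I_{l_i}$ to $\sum_i e_i$ modulo the sums of the ideals, which is exactly the canonical map, with no extra automorphism.
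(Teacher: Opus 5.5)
Your proof is correct. For $(1)\Rightarrow(2),(3),(4)$ and for $(2)+(3)\Rightarrow(1)$ it is the paper's argument. In the latter your inductive hypothesis is never actually used: (2) and (3) for the subset itself already concentrate the $E^2$ page of (\ref{spectralfromproducttosum}) at the spot of $\PPP_1$.

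For $(2)+(4)\Rightarrow(1)$ you take a different route.
\begin{itemize}
\item The paper, after the same induction, uses Corollary \ref{torind2} to turn (4) into (3) for the whole family. Vanishing of $H^{\geq 2}(\SSS^\bullet)$ and bijectivity of $\SSS^0\to H^1(\SSS^\bullet_-)$ give vanishing of $H_{\geq 2}(\PPP_\bullet)$. It then finishes as in $(2)+(3)\Rightarrow(1)$, with the product-to-sum spectral sequence (\ref{spectralfromproducttosum}).
\item You stay in the sum-to-product spectral sequence (\ref{spectralfromsumtoproduct}) and use Theorem \ref{conditionV}. There (2) kills $\HH_{t,q}$, hence $\Tor_{\geq t}$ by part (3), and (4) kills the middle range by parts (1) and (2).
\end{itemize}
Corollary \ref{torind2} is itself built from Corollary \ref{homologyofSSS} (i.e. Theorem \ref{conditionV}) and Proposition \ref{homologyofPPP}. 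So your version is a shortcut that never passes through $\PPP_\bullet$. The paper's version instead shows that, under (2) and the inductive hypothesis, (4) reduces to (3), so a single spectral-sequence argument closes both equivalences.

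The one non-formal input, which you rightly isolate, is that the middle map of the four-term sequence is the canonical map $\SSS^0\to H^1(\SSS^\bullet_-)$. The paper's route needs exactly the same fact, in Corollary \ref{torind2}(2), and uses it without comment. Your check is correct up to a global sign, which is harmless. Tracing the construction, the edge map sends $1$ to $(\pm1)_k$. These signs depend only on the Koszul and cone conventions, not on the ideals. In the case $I_1=\cdots=I_t=0$ over $\ZZ$, the image must lie in $\ker(\SSS^1\to\SSS^2)$, which is the diagonal, so the signs agree once the $E^1$ row is matched with $\SSS^\bullet_-$.

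Two index slips, inherited from the printed statements, do not affect the argument:
\begin{itemize}
\item In (b) you should restrict to $1\le i\le t-3$. At cohomological degree $t-1$ one always has $H^{t-1}(\SSS^\bullet)=0$, not $\TT_{t,0}$.
\item For $t=2$ the four-term sequence actually reads $0\to\TT_{2,1}\to\SSS^0\to H^1(\SSS^\bullet_-)\to 0$ (compare the $(R,2)$ example). From it you should extract only $\TT_{2,1}=0$, which is all you need.
\end{itemize}
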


\begin{proof}
It is imediate to see that $(1)\Rightarrow(2)+(4)$ follows from Corollary \ref{torind}, $(1)\Rightarrow(2)+(3)$ follows from Corollary \ref{torind} and Proposition \ref{homologyofPPP}, and that $(2)+(3)\Rightarrow(1)$ follows from the spectral sequence (\ref{spectralfromproducttosum}). To prove that $(2)+(4)\Rightarrow(1)$, we induct on $t$ to assume that any strict subsets of the $I_1,\ldots, I_n$ are Tor-independent, so apply Corollary \ref{torind2} and the convergence of the spectral sequence (\ref{spectralfromproducttosum}).
\end{proof}

\begin{rem}
It follows from Theorem \ref{rigtor}(1)(b) that, for proper ideals over a regular local ring containing a field, all the conditions in the Proposition \ref{equivalencesofexactness} are equivalent to $\Tor^R_1(R/I_1,\ldots, R/I_n)=0$.
\end{rem}

\begin{rem}\label{torinddefformod}
All the results above could be extended, again by propositions \ref{augmentedmainspectral} and \ref{fromaugmentedinteriorspectral}, to non necessarily cyclic $R$-modules. Indeed, given $R$-modules $M_1,\ldots, M_n$ and corresponding free resolutions $F^{M_1}_\bullet,\ldots, F^{M_n}_\bullet$, we set
$$\SSS^p:=\bigoplus_{i_1<\ldots<i_p}M_{i_1}\otimes\cdots\otimes M_{i_p}\quad\mbox{and}\quad\PPP_p:=\bigoplus_{i_1<\ldots<i_p}H_0({}^+(F_\bullet^{M_{i_1}}\otimes\cdots\otimes F_\bullet^{M_{i_p}})^\intF[p-1])$$
for $p>0$. The complexes $\SSS^\bullet$ and $\PPP_\bullet$ are well defined (up to isomorphism, according to Remark \ref{extensiontomodulesrmk}) via the Mayer-Vietoris spectral sequences (\ref{spectralfromsumtoproduct}) and (\ref{spectralfromproducttosum}). All the results above could then be extended along the same lines of proof.
\end{rem}

\subsection{Multigraded Tor modules}

We close the paper by relating the non-vanishing region of Tor modules of $R$-modules $M$ against products and sums. Let $S$ be a commutative unitary ring,  $G$ an abelian group, and $R=S[X_{1},\ldots, X_{n}]$ with a $G$-grading. By a graded $ R$-module, we mean an $R$-module with a $G$-grading.



\begin{defn}\label{supdef}
The support of a graded $R$-module $M$ is $$\Supp_{G}(M):=\{\gamma\in G :M_\gamma\neq0\}.$$

Given a graded $R$-module $M$ and a homogeneous ideal $I$ of $R$,
$$
\mathbb T_j^{I} (M):=\Supp_{G} (\Tor_j^R(M,R/I))$$
and ${\mathbb T}^{I}(M):=\cup_j \mathbb T_j^{I} (M)$. 

\end{defn}

Let $J\subseteq[n]=\{1,\ldots,n\}$. For each set of variables $\mathbf{X}_J:=\{X_{i},\ i\in J\}$, write $B_J$ for the ideal generated by these variables. 

\begin{prop}\label{supportoftors}
Let $M$ be a  graded $R$-module. If $J_1,\ldots ,J_s$ are non empty subsets  of $[n]$ such that $J_i\cap J_j=\emptyset$ for any $i\neq j$, then $$\cup_{i_1<\cdots<i_p}{\mathbb T}^{B_{J_{i_1}}\cdots B_{J_{i_p}}}(M)=\cup_{i_1<\cdots<i_p}{\mathbb T}^{B_{J_{i_1}}+\cdots+B_{J_{i_p}}}(M).$$

\end{prop}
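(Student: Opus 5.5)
We plan to derive the equality from the two Mayer--Vietoris spectral sequences, Corollaries \ref{spectraltorfromsumtoproduct} and \ref{spectraltorfromproducttosum}, applied to the ideals $B_{J_1},\ldots,B_{J_s}$. Write $L:=\cup_{i_1<\cdots<i_p}{\mathbb T}^{B_{J_{i_1}}+\cdots+B_{J_{i_p}}}(M)$ for the right-hand side and $L':=\cup_{i_1<\cdots<i_p}{\mathbb T}^{B_{J_{i_1}}\cdots B_{J_{i_p}}}(M)$ for the left-hand side. Both unions are taken over all nonempty subsets. Since the sets $J_i$ are pairwise disjoint, the ideals $B_{J_i}$ are generated by disjoint sets of variables.

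\textbf{Step 1: strong Tor-independence.} We first check that $B_{J_1},\ldots,B_{J_s}$ are strongly Tor-independent. The Koszul complex on $\mathbf X_{J}$ resolves $R/B_J$. For any subfamily of the ideals, the tensor product of these Koszul complexes is the Koszul complex on the union of the variable sets, which is still a regular sequence. Hence the higher homology vanishes, for any base ring $S$ and any grading. The same argument applies to every subfamily, so Remark \ref{equivalencevanishing} is not even needed. Everything here is graded, so the corollaries hold in the category of graded modules, with differentials of degree $0$.

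\textbf{Step 2: $L'\subseteq L$.} Fix a subfamily $i_1<\cdots<i_p$, say after renumbering the full family $1,\ldots,p$; this family is again strongly Tor-independent. Corollary \ref{spectraltorfromsumtoproduct} gives a graded spectral sequence with
$$E^1_{a,b}=\bigoplus_{k_1<\cdots<k_a\le p}\Tor_b^R(M,R/B_{J_{k_1}}+\cdots+B_{J_{k_a}})\Rightarrow\Tor^R_{b-a+1}(M,R/B_{J_1}\cdots B_{J_p}).$$
The terms of the abutment are filtered by subquotients of $E^\infty$, which are in turn subquotients of $E^1$. Taking the degree-$\gamma$ strand, which is exact, we get: if $\Tor_j^R(M,R/B_{J_1}\cdots B_{J_p})_\gamma\ne0$, then some $E^1$ term is nonzero in degree $\gamma$. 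Hence $\gamma\in L$.

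\textbf{Step 3: $L\subseteq L'$.} This is the symmetric argument using Corollary \ref{spectraltorfromproducttosum}. Its $E^1$ terms involve $\Tor_b^R(M,R/B_{J_{k_1}}\cdots B_{J_{k_a}})$ and it abuts to $\Tor^R_{a+b-1}(M,R/B_{J_1}+\cdots+B_{J_p})$. Support of the abutment is therefore contained in the union of supports of the $E^1$ terms, which lies in $L'$.

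\textbf{Main obstacle.} The step requiring the most care is the graded compatibility of these spectral sequences. Their construction via $\SSS^\bullet_-$ and $\PPP_\bullet$ tensored with a resolution of $M$ (see the remark following Corollary \ref{spectraltorfromproducttosum}) uses only degree-preserving maps. These are the sign matrices and the canonical projections, provided we take a graded free resolution of $M$. So the whole double complex is graded, and one can pass to each degree $\gamma$ separately.

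A secondary point is convergence, since $M$ need not be finitely generated. The double complexes are bounded in the $p$-direction, because $p$ ranges over $0\le p\le s$. This makes the filtrations finite, so the spectral sequences converge in each degree. Then $E^\infty$ is a subquotient of $E^1$ and the support containment is immediate.
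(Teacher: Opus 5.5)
Your proposal is correct and follows the paper's proof. You establish strong Tor-independence of the $B_{J_i}$ from the fact that a tensor product of Koszul complexes on disjoint sets of variables is the Koszul complex on their union, and you then apply Corollary \ref{spectraltorfromsumtoproduct} and Corollary \ref{spectraltorfromproducttosum} to each subfamily to get the two inclusions of supports. Your remarks on degree-preserving differentials and finite filtrations make explicit what the paper leaves implicit.
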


\begin{proof}
For any $1\leq i_1<\cdots<i_p\leq s$, The Koszul complex $K_\bullet(\mathbf{X}_{J_{i_1}};R)\otimes_R\cdots\otimes_RK_\bullet(\mathbf{X}_{J_{i_p}};R)\simeq K_\bullet(\mathbf{X}_{J_{i_1}}\cup\cdots\cup\mathbf{X}_{J_{i_p}};R)$ is the minimal free resolution of $R/B_{J_{i_1}\cup\cdots\cup J_{i_p}}$ which precisely means that the ideals $B_{J_1},\ldots,B_{J_s}$ are strongly Tor-independent. Given $i_1<\cdots<i_u$, from the spectral sequence in Corollary \ref{spectraltorfromsumtoproduct}
$$
E_{p,q+p-1}^1=\oplus_{i_1\leq j_1<\cdots <j_{p}\leq i_u}\Tor^R_{q+p-1}(M,R/B_{j_1}+\cdots +B_{j_{p}})\Rightarrow \Tor_{q}^R (M,R/B_{i_1}\cdots B_{i_u})
$$
we get $${\mathbb T}^{B_{i_1}\cdots B_{i_u}}(M)\subseteq\cup_{i_1\leq j_1<\ldots<j_p\leq i_u}{\mathbb T}^{B_{i_1}+\cdots+B_{i_p}}(M).$$

On the other hand, for each $i_1<\cdots<i_u$ the spectral sequence in Corollary \ref{spectraltorfromproducttosum}
$$E^1_{p,q-p+1}=\oplus_{i_1\leq j_1<\ldots<j_p\leq i_u}\Tor_{q-p+1}^R(M, R/B_{j_1}\cdots B_{j_p})\Rightarrow\Tor_{q}^R(M,R/B_{i_1}+\cdots+B_{i_u})$$
assures that ${\mathbb T}^{B_{i_1}+\ldots+B_{i_u}}(M)\subseteq\cup_{i_1\leq j_1<\ldots<j_p\leq i_u}{\mathbb T}^{B_{j_1}\cdots B_{j_p}}(M)$, whence the result.
\end{proof}

\noindent{\bf Acknowledgements.}
The first-named author was partially supported by the IIT Kharagpur start-up research grant, IIT Kharagpur CPDA, the second-named author was supported by the France-Brazil RFBM network, and the third-named author was partially supported by CNPq, grant 408698/2023-3. Part of this work was done when the first author visited Institut de Mathématiques de Jussieu – Paris Rive Gauche and he would like to thank the institute for its hospitality. Part of this paper was written at the Centre International de Rencontres Mathématiques (CIRM). The authors are very appreciative of the hospitality offered by CIRM.


\begin{thebibliography}{9}
\bibliographystyle{alpha}

\bibitem[Aus61]{Aus}
M. Auslander, {\it Modules over unramified regular local rings}, Illinois J. Math. \textbf{5} (1961), 631-647.

\bibitem[BH99]{BH}
W. Bruns, J. Herzog, {\it Cohen-{M}acaulay rings}, Cambridge Stud. in
  Adv. Math. \textbf{39}. Cambridge University Press, Cambridge, 1993.

\bibitem[CE56]{CE}
H. Cartan, S. Eilenberg, {\it Homological algebra}, Princeton University Press, Princeton (1956).

\bibitem[CHN26]{CHN}
M. Chardin, R. Holanda, J. Naéliton, {\it Homology of multiple complexes and Mayer-Vietoris spectral sequences}, Proc. Amer. Math. Soc. \textbf{154} (2026), 1359-1371.


 \bibitem[Gro61]{EGAIII}
 A. Grothendieck, {\it \'El\'ements de g\'eom\'etrie alg\'ebrique. III. \'Etude cohomologique des faisceaux coh\'erents. I.} Inst. Hautes \'Etudes Sci. Publ. Math. No. 11 (1961).

\bibitem[Lic66]{Lic}
S. Lichtenbaum, {\it On the vanishing of Tor in regular local rings}, Illinois J. Math. \textbf{10} (1966), 220-226.

\bibitem[Ser65]{Ser}
J.-P. Serre, {\it Algèbre locale. Multiplicités}, Lecture Notes in Mathematics, vol. 11. Springer-Verlag, Berlin-New York (1965). Cours au Collège de France, 1957-1958, rédigé par Pierre Gabriel, Seconde édition, 1965.


\bibitem[Sch98]{Sch}
P. Schenzel, {\it On the use of local cohomology in algebra and geometry}, In: Six Lectures on Commutative Algebra (Bellaterra, 1996), pp. 241-292. Progr. Math., 166 Birkhäuser, Basel (1998).

\bibitem[Ver96]{Ver}
J.-L. Verdier, {\it Des catégories dérivées des catégories abéliennes}, Astérisque, no. 239 (1996).


\end{thebibliography}
\end{document}